\newtheorem{theorem}{Theorem}[section]
\newtheorem{corollary}[theorem]{Corollary}
\newtheorem{lemma}[theorem]{Lemma}
\newtheorem{definition}[theorem]{Definition}
\newtheorem{proposition}[theorem]{Proposition}
\newtheorem{example}[theorem]{Example}
\newtheorem{remark}[theorem]{Remark}
\newcommand\reallywidehat[1]{%
\savestack{\tmpbox}{\stretchto{%
  \scaleto{%
    \scalerel*[\widthof{\ensuremath{#1}}]{\kern-.6pt\bigwedge\kern-.6pt}\documentclass{article}
\usepackage[utf8]{inputenc}
\usepackage{amsmath}
\usepackage{amssymb}
\usepackage{tikz}
\usepackage{graphicx}
\usepackage{float}
\usepackage{framed}
\usepackage[hang,flushmargin]{footmisc}
\usepackage{comment}
\usepackage{ytableau}
\usepackage[nottoc,numbib]{tocbibind}
\usepackage{mathtools}
\usepackage{amsthm}
\newtheorem{theorem}{Theorem}[section]
\newtheorem{corollary}[theorem]{Corollary}
\newtheorem{lemma}[theorem]{Lemma}
\newtheorem{definition}[theorem]{Definition}
\newtheorem{proposition}[theorem]{Proposition}

\newtheorem{remark}[theorem]{Remark}

\usepackage{scalerel,stackengine}
\stackMath
\newcommand\reallywidehat[1]{%
\savestack{\tmpbox}{\stretchto{%
  \scaleto{%
    \scalerel*[\widthof{\ensuremath{#1}}]{\kern-.6pt\bigwedge\kern-.6pt}%
    {\rule[-\textheight/2]{1ex}{\textheight}}
  }{\textheight}%
}{0.5ex}}%
\stackon[1pt]{#1}{\tmpbox}%
}
\parskip 1ex

\title{Limit formulas for the trace of the functional calculus of quantum channels for $SU(1,1)$}       
\author{robinvhaastrecht }
\date{August 2024}

\newcommand{\nm}[1]{ || #1 || }
\newcommand{\C}{\mathbb{C}}
\newcommand{\R}{\mathbb{R}}
\newcommand{\N}{\mathbb{N}}
\newcommand{\Hc}{\mathcal{H}}
\newcommand{\li}[1]{\overline{ #1}}
\newcommand{\Tr}{\mathrm{Tr}}
\newcommand{\D}{\mathbb{D}}

\begin{document}

    {\rule[-\textheight/2]{1ex}{\textheight}}
  }{\textheight}%
}{0.5ex}}%
\stackon[1pt]{#1}{\tmpbox}%
}
\title{Functional calculus of quantum channels for the holomorphic discrete series of $SU(1,1)$}
\author{Robin van Haastrecht}
\date{August 2024}
\newcommand{\nm}[1]{ || #1 || }
\newcommand{\C}{\mathbb{C}}
\newcommand{\R}{\mathbb{R}}
\newcommand{\N}{\mathbb{N}}
\newcommand{\Hc}{\mathcal{H}}
\newcommand{\li}[1]{\overline{ #1}}
\newcommand{\Tr}{\mathrm{Tr}}
\newcommand{\D}{\mathbb{D}}
\begin{document}

\maketitle

\begin{abstract}
The tensor product of two holomorphic discrete series representations of $SU(1,1)$ can be decomposed as a direct sum of infinitely many discrete series. I shall introduce equivariant quantum channels for each component of the direct sum, mapping bounded operators on one factor of the tensor product to operators on the component. Next I prove a limit formula for the trace of the functional calculus and I prove that the limit can be expressed using generalized Husimi functions or using Berezin transforms.
\end{abstract}

\section{Introduction}

Wehrl-type inequalities are of fundamental interest in Mathematical Physics \cite{wehrl}. First Lieb proved a Wehrl-type $L^p$-inequality for the Heisenberg group in 1978 \cite{lieb}, and many years later in 2014 Lieb and Solovej proved a Werhl-type inequality for the group $SU(2)$ \cite{liebsolBloch}. Afterwards, they formulated and proved Wehrl-type inequalities for some representations of the group $SU(n)$ \cite{liebsolSymm} and the discrete series representations of $SU(1,1)$ \cite{kuli} (see also Frank's paper \cite{frank} for a unified approach). The $L^p$-inequalities were later generalized by Zhang for even integers $p$ of holomorphic discrete series of Hermitian Lie groups \cite{zhangCon}. See also \cite{frank, sugi}. In order to prove the Wehrl inequality for $SU(2)$, Lieb and Solovej studied the trace of the functional calculus of certain quantum channels and proved it converges to an integral of matrix coefficients. Later, they showed a similar limit holds for certain quantum channels of $SU(1,1)$ \cite{liebsolSU11}. This was generalized by Zhang \cite{zhangCon} for the scalar holomorphic discrete series for Hermitian Lie groups.

The quantum channels studied by Lieb and Solovej are defined using the leading component in the tensor product representations in $SU(2)$ \cite{liebsolBloch}. In my previous paper \cite{haas} I studied a new family of quantum channels by considering all the irreducible components of the tensor product decomposition and I calculated the limit of the trace of the functional calculus of these $SU(2)$-equivariant quantum channels. Similar results were proven independently in \cite{soletal} where, among other things, this limit was obtained in terms of generalized Husimi functions. In those papers $SU(2)$-equivariant quantum channels were considered for operators on finite-dimensional spaces. In the current paper, we will consider $SU(1,1)$-equivariant channels. A natural candidate to define these channels on is the holomorphic discrete series, as they have $L^2$-matrix coefficients. We briefly explain our construction below.

Let $\Hc_{\mu}$ be the holomorphic discrete series representations with lowest weight $\mu$ and consider the tensor product decomposition of two irreducible representations of $SU(1,1)$ \cite{repk}

\begin{equation}
\label{decompintro}
\Hc_{\mu} \otimes \Hc_{\nu} \cong \bigoplus_{k=0}^{\infty} \Hc_{\mu + \nu + 2k}.
\end{equation}
Earlier \cite{liebsolSU11, zhangCon} the quantum channel
$$ \mathcal{T}^{\nu}_{\mu}: B(\Hc_{\mu}) \rightarrow B(\Hc_{\mu + \nu})$$
defined by considering the leading component
$$ \mathcal{T}^{\nu}_{\mu}(A) = P (A \otimes I_{\nu}) P^* \in B(\Hc_{\mu + \nu}).$$
was already considered. Here $I_{\nu}$ is the identity operator on $\Hc_{\nu}$ and
$$P: \Hc_{\mu} \otimes \Hc_{\nu} \rightarrow \Hc_{\mu + \nu}$$
is an intertwining partial isometry. It was proved that for any integer $n$
$$ \lim_{\nu \rightarrow \infty} \frac{1}{\nu} \Tr(\mathcal{T}^{\nu}(u \otimes u^*)^n) = \int_{SU(1,1)} |\langle g \cdot u, u_0 \rangle|^{2n} dg.$$
Here $u_0 \in \Hc_{\mu}$ is the function constantly equal to one. In this paper we introduce general quantum channels, defined by projecting onto the irreducible $k$ component of our decomposition (\ref{decompintro}). We define

$$ \mathcal{T}_{\mu,k}^{\nu}(A) = P_k ( A \otimes I_{\nu} ) P_k^*.$$
We prove this map is trace-preserving up to a constant and completely positive and study the limit formula of the trace of the functional calculus. It will turn out that the Berezin transform and the Toeplitz operator will be useful to study the limit. Then we introduce generalized Husimi functions and the operator $E_{\mu,k}$, and we obtain the following Theorem.

\begin{theorem}
\label{main}
Let $\psi \in C^1([0,1]) $, $\psi(0) = 0$ and $A \in B(\Hc_{\mu})$ positive and of trace one. Then
$$ \lim_{\nu \rightarrow \infty} \frac{1}{\nu} \Tr(\psi(\mathcal{T}_{\mu,k}^{\nu}(A))) = \int_{\D} \psi(H_{\mu}^k(A)(z)) d \iota (z).$$
Furthermore, let $f \in L^1(\D)$ such that $R_{\mu}^*(f)$ is positive and of trace one. Then
$$ \lim_{\nu \rightarrow \infty} \frac{1}{\nu} \Tr(\psi(\mathcal{T}_{\mu,k}^{\nu}(R_{\mu}^*(f)))) = \int_{\D} \psi(E_{\mu,k}(f)(z)) d \iota (z).$$
\end{theorem}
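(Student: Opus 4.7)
My plan is to reduce the theorem to the case of polynomials $\psi(x) = x^n$ with $n \ge 1$ and then compute the limit via a semiclassical coherent-state analysis. Since $A$ is positive of trace one, $\|A\| \le 1$, and $\mathcal{T}^\nu_{\mu,k}(A) = P_k (A \otimes I_\nu) P_k^*$ has operator norm at most $1$, so its spectrum lies in $[0,1]$. Using that the channel is trace-preserving up to a constant scaling linearly in $\nu$ (established earlier in the paper), the normalized trace $\frac{1}{\nu}\Tr(\mathcal{T}^\nu_{\mu,k}(A))$ is bounded uniformly in $\nu$. Writing $\psi \in C^1([0,1])$ with $\psi(0)=0$ as $\psi(x) = x\phi(x)$ with $\phi \in C([0,1])$, Stone--Weierstrass approximates $\phi$ uniformly by polynomials, and the estimate
\[
\bigl| \Tr(\psi(\mathcal{T}^\nu_{\mu,k}(A))) - \Tr(p(\mathcal{T}^\nu_{\mu,k}(A))) \bigr| \le \|\phi - q\|_\infty \, \Tr(\mathcal{T}^\nu_{\mu,k}(A)),
\]
with $p(x) = xq(x)$, shows the approximation transfers to the normalized trace uniformly in $\nu$. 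The right-hand side is controlled similarly since $H_\mu^k(A)$ is bounded. Thus it suffices to prove the formula for $\psi(x) = x^n$.

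For a monomial I would compute $\Tr((\mathcal{T}^\nu_{\mu,k}(A))^n)$ by inserting resolutions of the identity given by coherent states $e_z$, $z \in \D$, on $\Hc_{\mu+\nu+2k}$. This produces a representation as an integral over $\D^n$ of a product of $n$ matrix coefficients of the form $\langle e_{z_j}, \mathcal{T}^\nu_{\mu,k}(A) e_{z_{j+1}} \rangle$, with indices modulo $n$, weighted by Bergman-kernel factors. Each matrix coefficient unpacks, via the intertwining partial isometry $P_k^*$, into $A$ paired against suitable $k$-twisted coherent vectors in $\Hc_\mu$ and multiplied by a power of the kernel $(1 - z_j \overline{z_{j+1}})^{-(\mu+\nu+2k)}$, together with combinatorial constants depending on $\mu$, $\nu$ and $k$.

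The main obstacle is the asymptotic analysis as $\nu \to \infty$. After rescaling by $1/\nu$ and invoking standard Bergman-kernel asymptotics on $\D$, the product of kernels concentrates on the diagonal $z_1 = \cdots = z_n$; this is the familiar semiclassical mechanism underlying Berezin-quantization limit formulas. Making it rigorous requires uniform off-diagonal kernel estimates together with careful tracking of the Gamma-function factors arising from the Plancherel constants, and this is where most of the work lies. On the diagonal the integrand reduces, by the very definition of the generalized Husimi function $H_\mu^k(A)$ as a diagonal matrix element of $A$ against $k$-twisted coherent vectors, to $H_\mu^k(A)(z)^n$ after the $1/\nu$ normalization. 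Dominated convergence then yields the first formula.

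The second formula follows immediately from the first: taking $A = R_\mu^*(f)$ and invoking the identity $H_\mu^k(R_\mu^*(f)) = E_{\mu,k}(f)$---which should be established alongside the definition of $E_{\mu,k}$, expressing the Husimi function of a Toeplitz-type operator as a Berezin-type transform of its symbol---substitutes directly into the first formula to produce $\int_\D \psi(E_{\mu,k}(f)(z)) \, d\iota(z)$.
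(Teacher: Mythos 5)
Your overall architecture --- reduce to monomials via Stone--Weierstrass applied to $\phi(x)=\psi(x)/x$, then prove the monomial case by a semiclassical concentration-on-the-diagonal argument --- matches the paper's outer shell and its treatment of \emph{even} powers for \emph{compactly supported} symbols (the uniform off-diagonal bound you allude to is exactly Lemma \ref{abskernbdd}, and the diagonal concentration is Lemma \ref{tracexncpt}). But there is a genuine gap at the step you dismiss as ``where most of the work lies'': the passage from nice symbols to the actual operators treats odd and even $n$ on the same footing, and that is precisely where the naive argument fails. The operator $\mathcal{T}_{\mu,k}^{\nu}(R_{\mu}^*(f))$ equals $R_{\mu+\nu+2k}^*$ applied to the symbol $(B_{\mu+\nu+2k})^{-1}E_{\mu,k}^{\nu}(f)$, which is never compactly supported and moreover varies with $\nu$; so one must approximate. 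The paper controls the error of such approximations in the Hilbert--Schmidt norm (Proposition \ref{Rmubdd}, Lemmas \ref{tracexn} and \ref{convx2n}), and $\Tr(T^{2n})=\langle T^{n},(T^{*})^{n}\rangle_{S^2}$ converts Hilbert--Schmidt control into trace control \emph{only for even powers}. For odd $n$ there is no such identity, and the paper needs a separate device: the expansion $x=\sum_{i\ge 1}\frac{(\frac12)_{i-1}}{2\,i!}\bigl(1-(1-x^2)^i\bigr)$ on $[0,1]$ (Equation (\ref{taylorsqrt})), applied to the spectrum of $\mathcal{T}_{\mu,k}^{\nu}(A)$ together with a uniform-in-$\nu$ tail estimate (Theorem \ref{funccalcxn}), to bootstrap odd powers from even ones. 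Your proposal contains no substitute for this step, and without one the monomial case for odd $n$ is not established.

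A second, smaller omission: for the first formula you work with a general positive trace-one $A$, whose ``symbol'' need not be bounded or compactly supported at all, so even the even-power case requires an approximation of $A$ in trace norm by operators of the form $R_{\mu}^*(g)$ with $g\in C_c^{\infty}(\D)$, together with the uniform bound $\nm{\mathcal{T}_{\mu,k}^{\nu}}_{1\rightarrow 1}\lesssim \nu$ to make the approximation uniform in $\nu$ (Corollary \ref{Husimiremark2}). Your closing reduction of the second formula to the first via $H_{\mu}^k(R_{\mu}^*(f))=E_{\mu,k}(f)$ is correct and is Proposition \ref{Husimiremark1} in the paper. The polynomial-approximation reduction at the start is also sound and coincides with the paper's final step.
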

Note that the condition that $\psi \in C^1([0,1])$ and $\psi(0) = 0$ ensures our operator is trace-class. Here we use the notation
$$ E_{\mu,k}(f) \coloneqq \frac{(\mu)_{k}}{k!} \sum_{j=0}^k (-1)^j \binom{k}{j} B_{\mu + j}(f),$$
where $B_{\mu+j}$ is the Berezin transform from Definition \ref{berezindef} and the generalized Husimi function $H_{\mu}^k$ from Definition \ref{genHus}.

We use the following strategy. First we calculate
$$R_{\nu + \mu - 2k} \mathcal{T}_{\mu,k}^{\nu}(R_{\mu}^*(f)),$$
where $R_{\nu}$ is from Definition \ref{Rmudef}. We then notice that
$$\mathcal{T}_{\mu,k}^{\nu} R_{\mu}^* = R_{\mu + \nu - 2k}^* B_{\mu + \nu - 2k}^{-1} R_{\mu + \nu - 2k} \mathcal{T}_{\mu,k}^{\nu} R_{\mu}^*,$$
and that we can calculate
$$ \lim_{\nu \rightarrow \infty} \frac{1}{\nu} \Tr(( (\nu-1)R_{\nu}^*(f))^{2n}) = \int_{\D} f(z)^{2n} d \iota(z).$$
for any integer $n$ using the spectral theory of the Berezin transform $B_{\nu}$. Then we prove
$$\lim_{\nu \rightarrow \infty} \nm{(\nu B_{\nu})^{-1}B_{\nu_0}(f) - B_{\nu_0}(f)}_2 = 0$$
and obtain Theorem \ref{main} for $\psi(x) = x^{2n}$ for any $n$. We extend our result to odd integers $n$ by writing $\psi(x) = x^n$ as a converging sum of polynomials of even integers. Finally we use denseness of polynomials in $C([0,1])$ to prove Theorem \ref{main} in full generality.

We note that the Berezin transform has been studied extensively as it is closely related to quantization on Kähler manifolds in Geometry and Mathematical Physics \cite{alieng, BMSCMP, untup}. Some of our results about Berezin transforms might be obtained from these results, but we prove more precise results using representations of $SU(1,1)$.

The paper is organized as follows. In Section \ref{defpre} we introduce the discrete series representations of $SU(1,1)$ as reproducing kernel Hilbert spaces and recall some relevant known results. In Section \ref{rnuber} we define and analyse the Toeplitz operator, the covariant symbol and the Berezin transform, all of which are $SU(1,1)$-invariant. We also go through the Plancherel theory for the symmetric space $\D = SU(1,1)/U(1)$ and analyse our operators in that context. In Section \ref{channelingsec} we define our quantum channels and prove some of their basic properties. In Section \ref{tracfor} we study trace formulas for the functional calculus. In Section \ref{inversebersec} we analyse the inverse of the Berezin transform. Finally, in Section \ref{tracecalc} we prove Theorem \ref{main}, the main theorem.

\section*{Acknowledgements}
I want to thank Genkai Zhang for inspiring discussions and useful suggestions for handling the Berezin transform.

\section{Definitions and preliminaries}
\label{defpre}

Let $SU(1,1) = \{ \begin{pmatrix} a & b\\ \li{b} & \li{a} \end{pmatrix} \mid a,b \in \C \ \mathrm{and} \ |a|^2 - |b|^2 = 1 \}$ and $\D = \{ z \in \C \mid |z| < 1 \}$ the open unit disk.

\begin{definition}
For $\nu > 1$ we let
$$\Hc_{\nu} \coloneqq \{ f \colon \D \rightarrow \C \ \mathrm{analytic} \mid \int_{\D} |f(z)|^2 (1 - |z|^2)^{\nu} \frac{dz}{\pi (1 - |z|^2)^2} < \infty \}.$$
This is a Hilbert space with norm
$$ \nm{f}_{\nu}^2 \coloneqq (\nu - 1) \int_{\D} |f(z)|^2(1 - |z|^2)^{\nu} \frac{dz}{\pi (1 - |z|^2)^2}.$$
\end{definition}

Note that the norm is normalized such that $\nm{1}_{\nu}=1$. For convenience we write

$$ d \iota_{\nu}(z) \coloneqq (1 - |z|^2)^{\nu} \frac{(\nu - 1) dz}{\pi (1 - |z|^2)^{2}},$$
and
$$ d \iota(z) \coloneqq d \iota_0(z).$$

This space will give rise to a representation of $SU(1,1)$ when $\nu$ is an integer and a projective representation otherwise (as in the complex case we have to make a choice of root when $\nu$ is not an integer). For an integer $\nu > 1$ this will be the discrete series of $SU(1,1)$, as described in \cite[chapter 2]{knapp}. If $g = \begin{pmatrix} a & b \\ \li{b} & \li{a} \end{pmatrix}$, the action is

$$(g \cdot f)(z) = (-b z + \li{a})^{- \nu} f( \frac{a z - \li{b}}{-b z + \li{a}})$$
and gives a unitary representation of $SU(1,1)$. In what follows we let $\nu > 1$ be an integer. We now make a remark on the space $\D$.

\begin{remark}
\label{repsu11facts}
Note that $SU(1,1)$ acts on the symmetric space $SU(1,1) / U(1)$ by left multiplication. This space is isomorphic to $\D$ when the action on $\D$ is given by $g \cdot z = \frac{az - \li{b}}{-bz + \li{a}}$ for $g = \begin{pmatrix} a & b \\ \li{b} & \li{a} \end{pmatrix}$. Then we see that $U(1)$ is exactly the subgroup fixing $0$, and $SU(1,1)$ is transitive as it is transitive on the unit ball. Thus $\D \cong SU(1,1) / U(1)$. We note that there is a $SU(1,1)$-invariant metric on $\D$ making it a Riemannian symmetric space, see \cite[chapter VI, Theorem 1.1]{helgDS}.
\end{remark}

We calculate the norm of the monomials in our space.

\begin{lemma}
In the space $\Hc_{\nu}$
$$\nm{z^j}^2_{\nu} = \frac{j!}{ (\nu)_j}.$$
Furthermore, the functions $z^j$ and $z^k$ are orthogonal when $j \neq k$.
\end{lemma}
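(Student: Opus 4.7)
The plan is to compute the inner product $\langle z^j, z^k \rangle_\nu$ directly by passing to polar coordinates $z = re^{i\theta}$, which separates the angular and radial parts.

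First I would write, using $dz$ for the Lebesgue area measure on $\C$,
$$\langle z^j, z^k \rangle_\nu = \frac{\nu-1}{\pi} \int_0^{2\pi}\!\int_0^1 r^{j+k} e^{i(j-k)\theta} (1-r^2)^{\nu-2}\, r\, dr\, d\theta.$$
The angular integral $\int_0^{2\pi} e^{i(j-k)\theta} d\theta$ vanishes whenever $j \neq k$, which immediately gives the orthogonality claim.

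For $j = k$ the angular integral contributes $2\pi$, leaving
$$\|z^j\|_\nu^2 = 2(\nu-1) \int_0^1 r^{2j+1} (1-r^2)^{\nu-2}\, dr.$$
The substitution $u = r^2$ turns this into the Beta integral
$$\|z^j\|_\nu^2 = (\nu-1)\int_0^1 u^j (1-u)^{\nu-2}\, du = (\nu-1)\,B(j+1,\nu-1) = (\nu-1)\frac{\Gamma(j+1)\Gamma(\nu-1)}{\Gamma(j+\nu)}.$$
The last step is to rewrite this using the Pochhammer symbol: since $(\nu)_j = \nu(\nu+1)\cdots(\nu+j-1) = \Gamma(\nu+j)/\Gamma(\nu)$ and $(\nu-1)\Gamma(\nu-1) = \Gamma(\nu)$, the expression collapses to $j!/(\nu)_j$, as claimed.

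There is no serious obstacle here — the computation is routine once polar coordinates and the Beta function identity are in hand. The only small care needed is to keep track of the normalization factor $(\nu-1)$ that makes the constant function have unit norm (which corresponds to the $j=0$ case giving $0!/(\nu)_0 = 1$, a useful sanity check).
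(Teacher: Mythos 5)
Your computation is correct and is exactly the ``direct computation'' that the paper's proof consists of: polar coordinates kill the angular integral for $j \neq k$, and the Beta-function identity together with $(\nu-1)\Gamma(\nu-1)=\Gamma(\nu)$ gives $\|z^j\|_\nu^2 = j!/(\nu)_j$. No further comment is needed.
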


\begin{proof}
Direct computation.
\end{proof}

Note that $\Hc_{\nu}$ is a space of holomorphic functions, so the span of $\{ \sqrt{ \frac{(\nu)_i}{i!} } z^i \}_{i=0}^{\infty}$ is dense and we conclude that it is an orthonormal basis for the Hilbert space $\Hc_{\nu}$. Writing
$$ (1 - x \li{y})^{- \nu} = \sum_{i = 0}^{\infty} \frac{ (\nu)_i }{i!} (x \li{y})^i,$$
it directly follows that $\Hc_{\nu}$ is a reproducing kernel space with kernel $K^{\nu}(x,y) = (1 - x \li{y})^{-\nu}$. 

We now study the tensor product of the representations. If we see the tensor product as holomorphic functions on the product of two discs, which are integrable, then we have the decomposition \cite{repk}

\begin{equation}
\label{TensorProdDec}
\Hc_{\mu} \otimes \Hc_{\nu} = \bigoplus_{k = 0}^{\infty} \Hc_{\mu + \nu + 2k}.
\end{equation}
Thus there exists a projection

$$P_k \colon \Hc_{\mu} \otimes \Hc_{\nu} \rightarrow \Hc_{\mu + \nu + 2k}.$$
These projections have been studied previously and it was found that they have the following form \cite{peet, pezh, zag}.

\begin{proposition}
The map $P_k \colon \Hc_{\mu} \otimes \Hc_{\nu} \rightarrow \Hc_{\mu + \nu + 2k}$ is given by

\begin{flalign*}
& P_k(F)(\zeta) = C_{\mu, \nu, k} \sum_{j=0}^k (-1)^j \binom{k}{j} \frac{1}{(\mu)_{j} (\nu)_{k - j}} \partial^j_z \partial^{k-j}_w F |_{z=w=\zeta}
\\ & = C_{\mu, \nu, k} \int_{\D^2} F(z,w) ( \frac{\li{z}}{1 - \zeta \li{z}} - \frac{\li{w}}{1 - \zeta \li{w}})^k (1 - \zeta \li{z})^{- \mu} (1 - \zeta \li{w})^{- \nu} \cdot
\\ & d \iota_{\mu} (z) d \iota_{\nu}(w).
\end{flalign*}
Furthermore, the injective isometry $P_k^*$ is given by

\begin{flalign*}
& P_k^*(f)(z,w)
\\ & = C_{\mu,\nu,k} \int_{\D} (1 - z \li{\zeta})^{- \mu - k} (1 - w \li{\zeta})^{- \nu - k} f( \zeta) (z - w)^k d \iota_{\mu + \nu + 2k}(\zeta)
\\ & = C_{\mu,\nu,k} \int_{\D} (1 - z \li{\zeta})^{- \mu} (1 - w \li{\zeta})^{- \nu} f( \zeta) (\frac{z}{1 - z \li{\zeta}} - \frac{w}{1 - w \li{\zeta}})^k d \iota_{\mu + \nu + 2k}(\zeta).
\end{flalign*}
\end{proposition}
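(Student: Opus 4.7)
The plan is to leverage the multiplicity-free decomposition \eqref{TensorProdDec} and Schur's lemma, which pins down each intertwiner $P_k^* : \Hc_{\mu+\nu+2k} \hookrightarrow \Hc_\mu \otimes \Hc_\nu$ up to a single scalar. I would proceed in three steps: identify $P_k^*$ by tracking one distinguished vector, derive the integral form of $P_k$ by duality, and verify the differential formula on reproducing kernels.

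For $P_k^*$ I would first check from the infinitesimal $\mathfrak{su}(1,1)$-action that $(z-w)^k$ is a lowest weight vector of weight $\mu+\nu+2k$ inside $\Hc_\mu \otimes \Hc_\nu$: it is annihilated by the diagonal lowering operator $\partial_z + \partial_w$ and is an eigenvector of the diagonal weight operator $2z\partial_z + 2w\partial_w + \mu + \nu$ with eigenvalue $\mu+\nu+2k$. Since the constant function $1 \in \Hc_{\mu+\nu+2k}$ is itself the lowest weight vector there, Schur's lemma forces $P_k^*(1) = c\,(z-w)^k$ for some scalar $c$ pinned down by the isometry condition, via a direct computation of $\nm{(z-w)^k}^2$ using the orthonormal monomial basis. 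For general $f \in \Hc_{\mu+\nu+2k}$, I would use $f(\zeta) = \langle f, K^{\mu+\nu+2k}(\cdot,\zeta)\rangle$ together with the observation that each reproducing kernel $K^{\mu+\nu+2k}(\cdot,\zeta)$ is, up to a cocycle, the $SU(1,1)$-translate of the constant by any $g$ sending $0$ to $\zeta$. Applying $P_k^*(g \cdot 1) = g \cdot (z-w)^k$ and invoking the standard identities
$$ 1 - (g \cdot z)\overline{g \cdot \zeta} = \frac{1 - z \li{\zeta}}{j(g,z)\,\overline{j(g,\zeta)}}, \qquad g \cdot z - g \cdot w = \frac{z - w}{j(g,z)\, j(g,w)},$$
with $j(g,z) = -bz + \li{a}$, produces precisely the claimed kernel $(1-z\li{\zeta})^{-\mu-k}(1-w\li{\zeta})^{-\nu-k}(z-w)^k$. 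The equivalent second form follows from the elementary identity $\frac{z}{1-z\li{\zeta}} - \frac{w}{1-w\li{\zeta}} = \frac{z-w}{(1-z\li{\zeta})(1-w\li{\zeta})}$.

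With $P_k^*$ in hand, the integral formula for $P_k$ is immediate by adjunction from the integral kernel just obtained. To verify the differential form, I would evaluate both candidate expressions on the reproducing kernels $F(x,y) = (1 - x\li{z})^{-\mu}(1 - y\li{w})^{-\nu}$, which span a total set in $\Hc_\mu \otimes \Hc_\nu$. Direct differentiation gives $\partial_x^j (1 - x\li{z})^{-\mu}|_{x=\zeta} = (\mu)_j\,\li{z}^j (1 - \zeta\li{z})^{-\mu-j}$ and similarly in $w$, so the alternating binomial sum in the differential formula collapses to
$$ (1 - \zeta\li{z})^{-\mu}(1 - \zeta\li{w})^{-\nu}\left(\frac{\li{w}}{1 - \zeta\li{w}} - \frac{\li{z}}{1 - \zeta\li{z}}\right)^k,$$
which agrees with the integral expression on the same functions (the sign being absorbed in $C_{\mu,\nu,k}$). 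Since two bounded operators that coincide on a total set are equal, the two presentations of $P_k$ agree. The main obstacle throughout is bookkeeping rather than anything conceptual: confirming the lowest-weight calculation, tracking the cocycle factors through the equivariance argument, and checking that the constant $C_{\mu,\nu,k}$ is consistent across all three formulas.
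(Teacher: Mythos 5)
Your argument is correct, but it is genuinely different from what the paper does: the paper offers no self-contained proof at all, deferring to Peetre, Peng--Zhang and Zagier for the form of the transvectant-type projections and to ``calculations like in \cite{haas}'' for the adaptation, whereas you reconstruct the intertwiners from first principles. Your route --- multiplicity-freeness of the decomposition plus the observation that $(z-w)^k$ is the unique (up to scalar) lowest weight vector of weight $\mu+\nu+2k$ killed by $\partial_z+\partial_w$, then transport of $P_k^*(1)=c\,(z-w)^k$ to all reproducing kernels via the cocycle identities, then adjunction and verification of the differential formula on the total set of kernels --- is sound, and it has the advantage of making the paper's later computation of $C_{\mu,\nu,k}$ (via $\nm{(z-w)^k}^2_{\Hc_\mu\otimes\Hc_\nu}$, Proposition \ref{constantcmunuk}) appear as the natural normalization step rather than a separate calculation; the cost is the bookkeeping you acknowledge. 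Two details deserve explicit care if you write this out. First, with the paper's conventions one has $g\cdot 1=\li{a}^{-(\mu+\nu+2k)}K_{\zeta}$ with $\zeta=g^{-1}\cdot 0$ (not $g\cdot 0$), and passing from $P_k^*(K_\zeta)$ to $P_k^*(f)$ requires justifying the interchange of $P_k^*$ with the integral $f=\int_\D f(\zeta)K(\cdot,\zeta)\,d\iota_{\mu+\nu+2k}(\zeta)$, which is routine by boundedness of $P_k^*$ but should be said. Second, your observation about the sign is not merely cosmetic: evaluating both expressions for $P_k$ on $K^\mu_x\otimes K^\nu_y$ shows the differential form produces $\bigl(\tfrac{\li{y}}{1-\zeta\li{y}}-\tfrac{\li{x}}{1-\zeta\li{x}}\bigr)^k$ while the integral form (which is the honest adjoint of the displayed $P_k^*$) produces $\bigl(\tfrac{\li{x}}{1-\zeta\li{x}}-\tfrac{\li{y}}{1-\zeta\li{y}}\bigr)^k$, so the two lines of the statement differ by $(-1)^k$ with the same constant $C_{\mu,\nu,k}$; this is an inconsistency in the statement itself, harmless for the rest of the paper since only $C_{\mu,\nu,k}^2$ and $P_kP_k^*$ are used, but worth flagging rather than silently ``absorbing.''
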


\begin{proof}
Follows from \cite{peet, pezh} and calculations like in \cite{haas}. 


\end{proof}

We want to deduce the value of the constant $C_{\mu,\nu,k}$. First we recall the following summation formula \cite[Corollary 2.2.3]{andaskroy}.

\begin{lemma}
\label{Gausshgf}
For $n$ a non-negative integers and $b,c$ integers such that $|c| \geq n$ we have that
$${}_2F_1(-n, b, c, 1) = \frac{(c-b)_n}{(c)_n}$$
\end{lemma}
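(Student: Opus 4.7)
The plan is to recognize the claim as the classical Chu--Vandermonde summation and to derive it via the Euler Beta integral. First I would truncate the series: since $(-n)_k=0$ for $k>n$ and $(-n)_k/k! = (-1)^k\binom{n}{k}$, the hypergeometric sum collapses to
$$ {}_2F_1(-n,b,c,1) = \sum_{k=0}^n (-1)^k\binom{n}{k}\frac{(b)_k}{(c)_k}. $$

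Next, I would first establish the identity under the auxiliary assumption $b\ge 1$ and $c\ge b+1$, which permits the Beta representation
$$ \frac{(b)_k}{(c)_k} = \frac{\Gamma(c)}{\Gamma(b)\Gamma(c-b)}\int_0^1 t^{b+k-1}(1-t)^{c-b-1}\,dt. $$
Substituting this into the finite sum and exchanging summation with integration gives
$$ {}_2F_1(-n,b,c,1) = \frac{\Gamma(c)}{\Gamma(b)\Gamma(c-b)}\int_0^1 t^{b-1}(1-t)^{c-b-1}\sum_{k=0}^n\binom{n}{k}(-t)^k\,dt. $$
By the binomial theorem the inner sum equals $(1-t)^n$, so the integral collapses to the Beta function $B(b,c-b+n) = \Gamma(b)\Gamma(c-b+n)/\Gamma(c+n)$. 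A short $\Gamma$-ratio simplification then yields the right-hand side $(c-b)_n/(c)_n$.

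Finally, I would extend to the full range stated in the lemma. For fixed $n$, both sides are rational functions of the two parameters, with poles only along the zeros of $(c)_n$; they coincide on the Zariski-dense set $\{(b,c)\in\mathbb{Z}^2 : 1\le b < c\}$ by the previous step, hence they agree as rational functions wherever both are defined. The hypothesis $|c|\ge n$ is precisely what guarantees $(c)_n\neq 0$ for integer $c$, so the identity holds in the entire stated range.

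The whole argument is essentially routine once the Beta integral is introduced; there is no substantive obstacle. The only point that merits a moment of thought is the passage from the positive-integer regime to general integers, but this is handled by the standard rational-function continuation argument sketched above.
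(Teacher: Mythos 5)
Your proof is correct, but it is worth noting that the paper does not actually prove this lemma at all: it simply recalls the Chu--Vandermonde summation from the literature, citing Corollary 2.2.3 of Andrews--Askey--Roy. What you have written is the standard self-contained derivation of that result: truncate the terminating series, represent $(b)_k/(c)_k$ by the Euler Beta integral in the regime $1\le b<c$, collapse the binomial sum to $(1-t)^n$ under the integral, and then remove the auxiliary positivity assumptions by observing that after clearing the denominator $(c)_n$ both sides become polynomials in $(b,c)$ agreeing on an infinite-by-infinite integer grid, hence identically. Each step checks out; in particular $(c)_n\ne 0$ does imply $(c)_k\ne 0$ for all $k\le n$ since $(c)_n=(c)_k(c+k)_{n-k}$, so the continuation argument legitimately covers every term of the sum. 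The only small overstatement is the word ``precisely'': $|c|\ge n$ is a sufficient condition for $(c)_n\ne 0$ when $c$ is a nonzero integer, not an equivalent one (e.g.\ $c=1$, $n=5$ is fine but violates the hypothesis); this does not affect the validity of your argument. In short, your approach buys self-containment where the paper buys brevity by citation, and either is acceptable here.
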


Now we deduce the value of the constant $C_{\mu,\nu,k}$.

\begin{proposition}
\label{constantcmunuk}
We have
$$ C_{\mu, \nu, k}^{-2} = \frac{k! (\mu + \nu + k + 1)_k}{(\mu)_k (\nu)_k}.$$
\end{proposition}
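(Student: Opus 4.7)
The plan is to exploit the fact that $P_k^*$ is an isometry (as asserted in the proposition itself) and simply test the identity $\|P_k^*(f)\|^2_{\Hc_\mu \otimes \Hc_\nu} = \|f\|^2_{\Hc_{\mu+\nu+2k}}$ on the most convenient choice, namely $f \equiv 1$, which has norm $1$. This reduces the problem to computing $\|P_k^*(1)\|^2$ in closed form.

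First I would evaluate $P_k^*(1)(z,w)$ using the integral formula. With $f \equiv 1$ the factor $(z-w)^k$ comes out, leaving
$$ \int_{\D} (1 - z \bar{\zeta})^{-\mu - k} (1 - w \bar{\zeta})^{-\nu - k} \, d\iota_{\mu + \nu + 2k}(\zeta). $$
Expanding both kernels as binomial series in $\bar{\zeta}$ gives a double sum of the form $\sum_{n,m} c_{n,m} z^n w^m \bar{\zeta}^{n+m}$; rotational invariance of $d\iota_{\mu+\nu+2k}$ kills every term with $n+m>0$, collapsing the integral to $1$. Hence $P_k^*(1)(z,w) = C_{\mu,\nu,k} (z-w)^k$, and the isometry condition becomes $C_{\mu,\nu,k}^{-2} = \|(z-w)^k\|^2_{\Hc_\mu \otimes \Hc_\nu}$.

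Next, I would compute this norm by expanding $(z-w)^k$ with the binomial theorem and using orthogonality of the distinct monomials $z^j \otimes w^{k-j}$ in the tensor product, together with the norm formula $\|z^j\|^2_\mu = j!/(\mu)_j$ from the earlier lemma. A short bookkeeping step yields
$$ \|(z-w)^k\|^2_{\Hc_\mu \otimes \Hc_\nu} = k! \sum_{j=0}^{k} \binom{k}{j} \frac{1}{(\mu)_j (\nu)_{k-j}}. $$

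The final step is to evaluate this sum. Rewriting $\binom{k}{j} = (-1)^j (-k)_j / j!$ and $1/(\nu)_{k-j} = (-1)^j (1-\nu-k)_j / (\nu)_k$ (via $(\nu)_{k-j} = (\nu)_k / (\nu+k-j)_j$ and $(\nu+k-j)_j = (-1)^j (1-\nu-k)_j$) recasts the sum as a terminating Gauss hypergeometric series $\frac{1}{(\nu)_k}\,{}_2F_1(-k,\,1-\nu-k;\,\mu;\,1)$, to which Lemma \ref{Gausshgf} applies and gives a ratio of Pochhammer symbols. Combining with the prefactor $k!$ produces the stated value of $C_{\mu,\nu,k}^{-2}$. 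The main obstacle is purely combinatorial: matching the Pochhammer shifts correctly when moving between $(\nu)_{k-j}$ and $(1-\nu-k)_j$, and then reading off Gauss's identity with the right parameters. Everything else is bookkeeping.
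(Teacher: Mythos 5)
Your proposal follows essentially the same route as the paper: both test the isometry identity on $f \equiv 1$, use rotational invariance of $d\iota_{\mu+\nu+2k}$ to obtain $P_k^*(1) = C_{\mu,\nu,k}(z-w)^k$, expand $\nm{(z-w)^k}^2_{\Hc_\mu \otimes \Hc_\nu}$ by orthogonality of monomials to get $k!\sum_{j}\binom{k}{j}\frac{1}{(\mu)_j(\nu)_{k-j}}$, and evaluate this as $\frac{k!}{(\nu)_k}\,{}_2F_1(-k,\,1-\nu-k;\,\mu;\,1)$ via Lemma \ref{Gausshgf}. One caveat: Gauss's identity with these parameters yields $(\mu-(1-\nu-k))_k = (\mu+\nu+k-1)_k$, not $(\mu+\nu+k+1)_k$ (check $k=1$: $\nm{z-w}^2 = \frac{1}{\mu}+\frac{1}{\nu} = \frac{\mu+\nu}{\mu\nu}$, whereas the displayed formula gives $\frac{\mu+\nu+2}{\mu\nu}$), so your closing claim that the last step ``produces the stated value'' is not literally accurate --- the shift in the statement appears to be a typo, which the paper's own final line reproduces as well.
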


\begin{proof}
We know that
$$P_k P_k^*\colon \Hc_{\mu + \nu + 2k} \rightarrow \Hc_{\mu + \nu + 2k}$$
is the identity. We now note that
\begin{flalign*}
& P_k^*(1)(z,w) = C_{\mu,\nu,k} \int_{\D} (1 - z \li{\zeta})^{-\mu-k} (1 - w \li{\zeta})^{-\nu-k} (z-w)^k d \iota_{\mu+\nu+2k}(\zeta)
\\ & = (z-w)^k.
\end{flalign*}
Thus we see

\begin{flalign*}
& 1 = \langle 1, 1 \rangle_{\mu + \nu + 2k} = \langle P_k P_k^*(1),1 \rangle_{\mu + \nu + 2k} = \langle P_k^*(1), P_k^*(1) \rangle_{\Hc_{\mu} \otimes \Hc_{\nu}}
\\ & = C^2_{\mu, \nu, k} \nm{(z-w)^k}^2_{\Hc_{\mu} \otimes \Hc_{\nu}}.
\end{flalign*}
We calculate
\begin{flalign*}
& C_{\mu, \nu, k}^{-2} = \nm{(z-w)^k}_{\Hc_{\mu} \otimes \Hc_{\nu}}^2 = \sum_{j=0}^k \binom{k}{j}^2 \nm{z^j w^{k-j}}^2_{\Hc_{\mu} \otimes \Hc_{\nu}}
\\ & = \sum_{j=0}^k \binom{k}{j}^2 \frac{j! (k-j)!}{(\mu)_j (\nu)_{k-j}} = k! \sum_{j=0}^k \binom{k}{j} \frac{1}{(\mu)_j (\nu)_{k-j}}
\\ & = \frac{k!}{(\nu)_k} \sum_{j=0}^k (-1)^j \frac{(-k)_j}{j!} \frac{(-1)^j (-\nu - k + 1)_j}{(\mu)_j} = \frac{k!}{(\nu)_k} {}_2F_1(-k, -\nu-k+1, \mu;1)
\\ & = \frac{k! (\mu + \nu + k + 1)_k}{(\mu)_k (\nu)_k}.
\end{flalign*}
\end{proof}

\section{The Toeplitz operator, covariant symbol and Berezin transform}
\label{rnuber}

We define some other $SU(1,1)$-invariant operators on the representation
spaces. First we define $R_{\nu}$, commonly called the covariant symbol \cite{alieng, engl, zhangbound}.

\begin{definition}
\label{Rmudef}
The covariant symbol map $R_{\nu}$,
$$R_{\nu}\colon B(\Hc_{\nu}) \rightarrow C^{\infty}(\D)$$
is given by $R_{\nu}(A)(z) = A(z,z)(1 - |z|^2)^{\nu}$.
\end{definition}

We prove some facts for this map.

\begin{proposition}
\label{factsRmu}
The maps
$$R_{\nu}\colon S^p(\Hc_{\nu}) \rightarrow L^p(\D, d \iota)$$
for $1 \leq p < \infty$ and the map
$$R_{\nu}\colon B(\Hc_{\nu}) \rightarrow L^{\infty}(\D, d \iota)$$
are well-defined, continuous with respect to the natural norm, and injective.
\end{proposition}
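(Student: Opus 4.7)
The strategy is to treat the endpoints $p=\infty$ and $p=1$ directly, then interpolate for the intermediate $p$, and finally argue injectivity separately. The key observation is that the normalised reproducing kernel $\tilde{k}_z^\nu \coloneqq (1-|z|^2)^{\nu/2}(1-\cdot\,\li{z})^{-\nu}$ is a unit vector in $\Hc_\nu$, and a direct computation identifies
$$R_\nu(A)(z) = \langle A\tilde{k}_z^\nu,\tilde{k}_z^\nu\rangle_\nu.$$
The $L^\infty$-bound $\|R_\nu(A)\|_\infty \leq \|A\|$ for $A\in B(\Hc_\nu)$ then follows at once from Cauchy--Schwarz.

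For $p=1$ I would first treat positive $A\in S^1(\Hc_\nu)$. Diagonalising $A=\sum_n s_n|u_n\rangle\langle u_n|$ with $s_n\geq 0$ and $\{u_n\}$ orthonormal, monotone convergence together with $\int_\D |u_n(z)|^2\,d\iota_\nu(z) = \|u_n\|_\nu^2 = 1$ yields
$$\int_\D R_\nu(A)(z)\,d\iota(z) \;=\; \frac{1}{\nu-1}\sum_n s_n \;=\; \frac{\Tr(A)}{\nu-1}.$$
The pointwise inequality $|R_\nu(A)(z)|\leq R_\nu(|A|)(z)$, which follows from $\tilde{k}_z^\nu$ being a unit vector and the splitting $A = A_+ - A_-$, upgrades this to $\|R_\nu(A)\|_1 \leq \|A\|_1/(\nu-1)$ for self-adjoint $A\in S^1$; decomposing a general $A$ into its real and imaginary parts handles the remaining case. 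Complex interpolation, using that the Schatten classes $S^p(\Hc_\nu)$ and the Lebesgue spaces $L^p(\D,d\iota)$ are both interpolation scales between their $p=1$ and $p=\infty$ endpoints, then extends boundedness to every $1<p<\infty$ with operator norm at most $(\nu-1)^{-1/p}$.

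Injectivity I would establish by polarising the covariant symbol. Set $F(z,w) \coloneqq \langle AK^\nu(\cdot,w),K^\nu(\cdot,z)\rangle_\nu$; the explicit form of $K^\nu$ makes $F$ holomorphic in $z$ and antiholomorphic in $w$ on $\D\times\D$. If $R_\nu(A)=0$ in $L^p(\D,d\iota)$, then, as $R_\nu(A)$ is real-analytic on $\D$, it vanishes identically, so $F(z,z)\equiv 0$. Regarding $(z,\li{w})\mapsto F(z,w)$ as a holomorphic function of two complex variables on $\D\times\D$ and noting that the diagonal $\{(z,\li{z})\}$ is a totally real submanifold, uniqueness of analytic continuation forces $F\equiv 0$; since the span of $\{K^\nu(\cdot,w):w\in\D\}$ is dense in $\Hc_\nu$, this gives $A=0$. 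The main obstacle in the whole argument is really just this last polarisation: the norm inequalities are routine once the reproducing-kernel formulation is in hand, but one must recognise the correct sesquiholomorphic structure of $F$ before invoking uniqueness of analytic continuation from the diagonal.
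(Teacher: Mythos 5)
Your proposal is correct and follows essentially the same route as the paper: establish the endpoint bounds at $p=1$ (via positivity of $\langle A\tilde{k}_z^\nu,\tilde{k}_z^\nu\rangle$ for positive trace-class $A$, then decomposition into self-adjoint and positive parts) and at $p=\infty$ (via Cauchy--Schwarz on the normalised kernel), interpolate for intermediate $p$, and deduce injectivity from the fact that the kernel $A(z,w)$, being holomorphic in $z$ and antiholomorphic in $w$, is determined by its restriction to the diagonal. The paper phrases this last step as recovering $A(x,y)$ by differentiating in $\partial/\partial z$ and $\partial/\partial\li{z}$, which is the same uniqueness-of-sesquiholomorphic-extension argument you invoke.
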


\begin{proof}
Let $A \in B(\Hc_{\nu})$ with kernel $A(x,y)$. We note that the function $z \mapsto R_{\nu}(A)(z) = A(z,z) (1-|z|^2)^{\nu}$ is holomorphic in the first coordinate and antiholomorphic in the second. Hence we recover $A(x,y)$ by deriving w.r.t. $\frac{\partial}{\partial z}$ and $\frac{\partial}{\partial \li{z}}$. Now let $A \in S^1(\Hc_{\nu})$ and $A \geq 0$. Then for any $z \in \D$
$$A(z,z) = A(K_z^{\nu})(z) = \langle A(K_z^{\nu}), K_z^{\nu} \rangle \geq 0,$$
and thus we get
\begin{flalign*}
& \int_{\D} |R_{\nu}(A)(z)| d \iota(z) = \int_{\D} |A(z,z) (1 - |z|^2)^{\nu}| d \iota(z)
\\ & = \frac{1}{\nu - 1} \int_{\D} A(z,z) d \iota_{\nu}(z) = \frac{1}{\nu-1} \Tr(A),
\end{flalign*}
so $R_{\nu}(A) \in L^1(\D)$. Now if $A \in S^1(\Hc_{\nu})$ then $A = A_1 + i A_2$ with $A_1$ and $A_2$ self-adjoint, and writing each self-adjoint operator as $A_j = A_j^+ - A_j^-$, where $A_j^{\pm}$ are positive, we obtain
\begin{flalign*}
& \int_{\D} |R_{\nu}(A)(z)| d \iota(z) \leq \frac{2}{\nu - 1} \nm{A}_{1}.
\end{flalign*}
This proves the fact for $S^1(\Hc_{\nu})$.

Now let $A \in B(\Hc_{\nu})$. First we note that for any $z \in \D$
\begin{flalign*}
& |A(z,z)| = | \langle A( K_z^{\nu}), K_z^{\nu} \rangle_{\nu} | \leq \nm{A(K_z^{\nu})}_{\nu} \nm{K_z^{\nu}}_{\nu} \leq \nm{A} \cdot \nm{K_z^{\nu}}_{\nu}^2,
\end{flalign*}
implying
$$ |A(z,z)(1 - |z|^2)^{\nu}| \leq \nm{A}.$$
This proves the claim for $B(\Hc_{\nu})$.

The claim for general $S^p(\Hc_{\nu})$ follows by interpolation, using Theorem 2.2.4, Theorem 2.2.6 and Theorem 2.2.7 in \cite{zhu}.
\end{proof}

In particular
$$R_{\nu}\colon S^2(\Hc_{\nu}) \rightarrow L^2(\D, d \iota)$$
is continuous. Note that $S^2(\Hc_{\nu}) = \Hc_{\nu} \otimes \li{ \Hc_{\nu} }$ by considering the kernels of the operators, and it is a Hilbert space. We prove a lemma.

\begin{lemma}
\label{Toeplitzrmu}
We have
$$R_{\nu}^*(f) = \frac{1}{\nu - 1} T_f,$$
where $T_f$ is the Toeplitz operator on $\Hc_{\nu}$, and $R_{\nu}^*$ has kernel
$$R_{\nu}^*(f)(x,y) = \int_{\D} \frac{K^{\nu}(x,z) f(z) K^{\nu}(z,y)}{K^{\nu}(z,z)} d \iota(z).$$
\end{lemma}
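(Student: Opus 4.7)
The plan is to first reduce the claimed kernel formula to the identification $R_\nu^*(f) = \tfrac{1}{\nu-1} T_f$, and then prove that identification by a direct pairing computation.

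First I would work out the kernel of the Toeplitz operator. Writing $T_f g = P(fg)$ with $P$ the Bergman projection (kernel $K^\nu$) gives $(T_f g)(x) = \int_\D K^\nu(x,z) f(z) g(z) d\iota_\nu(z)$, so the reproducing-kernel integral kernel of $T_f$ is
\[T_f(x,y) = (T_f K_y^\nu)(x) = \int_\D K^\nu(x,z) K^\nu(z,y) f(z) d\iota_\nu(z).\]
Using the normalization $d\iota_\nu(z) = (\nu-1)(1-|z|^2)^\nu d\iota(z)$ together with $K^\nu(z,z) = (1-|z|^2)^{-\nu}$ this rearranges to
\[\tfrac{1}{\nu-1} T_f(x,y) = \int_\D \frac{K^\nu(x,z) f(z) K^\nu(z,y)}{K^\nu(z,z)} d\iota(z),\]
which is exactly the kernel claimed for $R_\nu^*(f)$. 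So the kernel formula follows automatically once the identification $R_\nu^* = \tfrac{1}{\nu-1} T_f$ is established.

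To prove that identification I would pair $R_\nu(A)$ with $f$ in $L^2(\D, d\iota)$ for $A \in S^2(\Hc_\nu)$ and $f \in L^2(\D, d\iota)$. The left pairing is
\[\langle R_\nu(A), f\rangle_{L^2(d\iota)} = \int_\D A(z,z)(1-|z|^2)^\nu \li{f(z)} d\iota(z) = \tfrac{1}{\nu-1} \int_\D \li{f(z)} A(z,z) d\iota_\nu(z).\]
On the other side, $\langle A, \tfrac{1}{\nu-1} T_f\rangle_{S^2} = \tfrac{1}{\nu-1} \Tr(A T_{\li f})$ since $T_f^* = T_{\li f}$, and expanding the trace in an orthonormal basis $\{e_n\}$ of $\Hc_\nu$ using $T_{\li f} e_n = P(\li f e_n)$ gives
\[\Tr(A T_{\li f}) = \sum_n \langle \li f e_n, A^* e_n\rangle_{L^2(d\iota_\nu)} = \int_\D \li f(z) \sum_n e_n(z) \li{(A^* e_n)(z)} d\iota_\nu(z).\]
The crucial simplification is that the inner sum equals $A(z,z)$: expanding $K_z^\nu = \sum_n \li{e_n(z)} e_n$ in $\Hc_\nu$ and applying $A$ yields $A(z,z) = (A K_z^\nu)(z) = \sum_n \langle A K_z^\nu, e_n\rangle e_n(z)$, while $\langle A K_z^\nu, e_n\rangle = \li{(A^* e_n)(z)}$. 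Substituting matches the $L^2(d\iota)$ pairing, proving $R_\nu^*(f) = \tfrac{1}{\nu-1} T_f$.

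The main obstacle here is mostly bookkeeping: juggling the two measures $d\iota$ and $d\iota_\nu$ with their normalization factor $(\nu-1)(1-|z|^2)^\nu$, and keeping (anti)holomorphic conventions for $K^\nu(x,y) = (1 - x \li y)^{-\nu}$ straight. The only genuine analytical step is the interchange of summation and integration in the trace formula, which is justified by Cauchy-Schwarz applied to the double series together with $\sum_n |e_n(z)|^2 = K^\nu(z,z) = (1-|z|^2)^{-\nu}$; this reduces the tail bound to the finite quantity $\sqrt{\nu-1}\, \nm{f}_{L^2(d\iota)} \nm{A}_{S^2}$.
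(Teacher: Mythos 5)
Your proposal is correct and follows essentially the same route as the paper: both arguments compute the kernel of $T_f$ from the Bergman projection and then identify $R_\nu^*(f)$ through the duality pairing $\langle R_\nu(A), f\rangle_{L^2(\D,d\iota)} = \langle A, R_\nu^*(f)\rangle_{S^2}$ for $A \in S^2(\Hc_\nu)$. The only cosmetic difference is that you evaluate the Hilbert--Schmidt pairing by an orthonormal-basis expansion of the trace (with an explicit Cauchy--Schwarz justification of the interchange), whereas the paper integrates the kernel of the product along the diagonal; the two computations are equivalent.
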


\begin{proof}
The Toeplitz operator is defined by $T_f = P_{\Hc_{\nu}} M_f P_{\Hc_{\nu}}$, where $M_f$ is multiplication by $f$ in $L^2(\D, d \iota)$ and $P_{\Hc_{\nu}}$ is projection onto the space $\Hc_{\nu}$. We note that the map
$$f \mapsto (x \mapsto \int_{\D} (\nu + 1) \frac{K^{\nu}(x,z) f(z)}{K^{\nu}(z,z)} d\iota(z))$$
is the projection onto $\Hc_{\nu}$, so the Toeplitz operator $T_f$ has kernel

$$T_f(x,y) = \int_{\D} K^{\nu}(x,z) f(z) K^{\nu}(z,y) d \iota_{\nu}(z).$$
Now for $A \in S^2(\Hc_{\nu})$ and $f \in L^2(\D, d \iota)$

\begin{flalign*}
& \int_{\D} A R_{\nu}^*(f)^* (z,z) d \iota_{\nu} (z) = \langle A, R_{\nu}^*(f) \rangle_{2} = \langle R_{\nu}(A), f \rangle_{L^2(\D, d \iota)}
\\ & = \int_{\D} A(z,z) (1 - |z|^2)^{\nu} \li{f(z)} d \iota(z) = \frac{1}{\nu - 1} \int_{\D} A(z,z) \li{f(z)} d \iota_{\nu}(z)
\\ & = \frac{1}{\nu - 1} \int_{\D^3} A(y,x) K^{\nu}(x,z) \li{f(z)} K^{\nu}(z,y) d \iota_{\nu}(z) d \iota_{\nu}(x) d \iota_{\nu} (y)
\\ & = \int_{\D^3} A(y,x) \frac{K^{\nu}(x,z) \li{f(z)} K^{\nu}(z,y)}{K^{\nu}(z,z)} d \iota(y) d \iota_{\nu}(x) d \iota_{\nu} (z),
\end{flalign*}
implying that indeed
$$R_{\nu}^*(f)(x,y) = \int_{\D} \frac{K^{\nu}(x,z) f(z) K^{\nu}(z,y)}{K^{\nu}(z,z)} d \iota(z).$$
\end{proof}

\begin{remark}
\label{Rnustargeneral}
We remark that $R_{\nu}^*: L^p(\D, d \iota) \rightarrow S^p(\Hc_{\nu})$ can be defined by the same formula for $p > 1$ by using the adjoint of $R_{\nu}$ in the sense of Banach spaces. The proof is the same.
\end{remark}

Now we define the Berezin transform.

\begin{definition}
\label{berezindef}
The Berezin transform is defined as
\begin{flalign*}
& B_{\nu} = R_{\nu} R_{\nu}^*\colon L^2(\D, d \iota) \rightarrow L^2(\D, d \iota).
\end{flalign*}
\end{definition}

We calculate this more explicitly. We see that

\begin{flalign*}
& B_{\nu}(f)(z) = R_{\nu} R_{\nu}^*(f)(z) = (1 - |z|^2)^{\nu} R_{\nu}^*(f)(z,z)
\\ & = \frac{(1 - |z|^2)^{\nu}}{\nu - 1} \int_{\D} K^{\nu}(z,x) f(x) K^{\nu}(x,y) d \iota_{\nu}(x)
\\ & = \int_{\D} \frac{(1 - |z|^2)^{\nu} (1 - |x|^2)^{\nu}}{ (1 - z \li{x})^{\nu} (1 - x \li{z})^{\nu}} f(x) d \iota(x).
\end{flalign*}
Note that

$$B_{\nu}(f)(0) = \int_{\D} (1 - |x|^2)^{\nu} f(x) d \iota(x).$$
The open disk $\D$ is a homogeneous space for $SU(1,1)$, so $SU(1,1)$ acts transitively, as in Remark \ref{repsu11facts}, and the maps $R_{\nu}$, $R_{\nu}^*$ and thus also $B_{\nu}$ are $SU(1,1)$-invariant, which means that $B_{\nu}(f)(g \cdot 0) = B_{\nu}(g^{-1} f)(0)$. Thus it is usually enough to prove a statement in the point $0$.

It follows immediately that $B_{\nu}(f)$ is continuous for $f \in C_c(\D)$. We also note that we can define $B_{\nu}\colon L^1(\D, d \iota) \rightarrow L^1(\D, d \iota)$, by evaluating

\begin{flalign*}
& \int_{\D} |B_{\nu}(f)(z)| d \iota(z)  \leq \int_{\D} \int_{\D} | \frac{K^{\nu}(z,x)K^{\nu}(x,z)}{K^{\nu}(z,z)K^{\nu}(x,x)} f(x)| d \iota(z) d \iota(x)
\\ & = (\nu + 1) \int_{\D} \langle K_x, K_x \rangle |f(x)| \frac{d \iota(x)}{K^{\nu}(x,x)} = (\nu + 1) \nm{f}_{1}.
\end{flalign*}

We can also define $B_{\nu}\colon L^{\infty}(\D, d \iota) \rightarrow L^{\infty}(\D, d \iota)$, as

\begin{flalign*}
& |B_{\nu}(f)(z)| = | \int_{\D} \frac{K^{\nu}(z,x)K^{\nu}(x,z)}{K^{\nu}(z,z)K^{\nu}(x,x)} f(x) d \iota(x)|
\\ & \leq \nm{f}_{\infty} \int_{\D} \frac{K^{\nu}(z,x)K^{\nu}(x,z)}{K^{\nu}(z,z)K^{\nu}(x,x)} d \iota(x) = (\nu + 1) \nm{f}_{\infty}
\end{flalign*}
This means that in fact, $B_{\nu}: L^p(\D, d \iota) \rightarrow L^p(\D, d \iota)$ is defined and continuous for any $1 \leq p \leq \infty$ by interpolation using Theorem 2.2.4, Theorem 2.2.6 and Remark 2.2.5 in \cite{zhu}. We gather some more facts about the Berezin transform.
\begin{lemma}
\label{berezinto1}
For $f \in C(\D) \cap L^1(\D, d \iota)$ the function $(\nu - 1) B_{\nu}(f)$ converges to $f$ pointwise, and if $f \in L^1(\D, d \iota)$ then for any $\nu \geq 2$

$$ \int_{\D} (\nu - 1) B_{\nu}(f)(z) d \iota(z) = \int_{\D} f(z) d \iota(z).$$
and
$$ \Tr(R_{\nu}^*(f)) = \int_{\D} f(z) d \iota(z).$$
\end{lemma}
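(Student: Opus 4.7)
My plan is to handle the three assertions in sequence, using the $SU(1,1)$-invariance of $B_\nu$ noted just before the lemma as the main simplification, and deriving the trace identity from the integral identity.

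For the pointwise convergence, the first move is to reduce to $z=0$ by invariance: pick $g\in SU(1,1)$ with $g\cdot 0=z$ and replace $f$ by $g^{-1}f$, which remains continuous and in $L^1(\D,d\iota)$ because $d\iota$ is the invariant measure. At the origin, the formula $B_\nu(h)(0)=\int_\D(1-|x|^2)^\nu h(x)\,d\iota(x)$ rewrites $(\nu-1)B_\nu(h)(0)$ as the integral of $h$ against $d\mu_\nu(x)=(\nu-1)(1-|x|^2)^{\nu-2}\tfrac{dx}{\pi}$, which a quick polar-coordinate computation shows to be a probability measure. The measures $\mu_\nu$ concentrate exponentially at the origin: for any $\delta>0$ the portion over $\{|x|\geq\delta\}$ is dominated by $(\nu-1)(1-\delta^2)^\nu\nm{h}_{L^1(d\iota)}\to 0$, while continuity of $h$ at $0$ controls the contribution from $\{|x|<\delta\}$ via the standard approximate-identity argument.

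For the integral identity I would apply Fubini (justified by positivity of the Berezin kernel, or just by the $L^1$-boundedness $\nm{B_\nu f}_1\leq (\nu+1)\nm{f}_1$ established just above) together with the symmetry of the Berezin kernel in $(z,x)$ to obtain
\[
\int_\D B_\nu(f)(z)\,d\iota(z)=\int_\D f(x)\,B_\nu(1)(x)\,d\iota(x).
\]
Since $B_\nu$ commutes with the $SU(1,1)$-action and the constant function $1$ is fixed, $B_\nu(1)$ is invariant and hence constant on the homogeneous space $\D$; its value at the origin is $\tfrac{1}{\nu-1}$ by the same polar computation as in step one. Multiplying through by $\nu-1$ yields the identity.

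The trace identity then follows. Unfolding $d\iota_\nu$ and using $B_\nu(f)(z)=R_\nu^*(f)(z,z)(1-|z|^2)^\nu$ gives
\[
\Tr(R_\nu^*(f))=\int_\D R_\nu^*(f)(z,z)\,d\iota_\nu(z)=(\nu-1)\int_\D B_\nu(f)(z)\,d\iota(z),
\]
which equals $\int_\D f(z)\,d\iota(z)$ by the previous step. (As a sanity check, one can instead compute the trace directly from the kernel formula of Lemma \ref{Toeplitzrmu}, collapsing the inner integral by the reproducing identity $\int_\D |K^\nu(z,x)|^2\,d\iota_\nu(z)=K^\nu(x,x)$.) The only delicate point is justifying Fubini for a general $f\in L^1(d\iota)$, but this is immediate from positivity of the Berezin kernel on each of the four sign components of $f$, so I do not foresee any genuine obstacle.
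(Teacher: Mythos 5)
Your proposal is correct and follows essentially the same route as the paper: reduction to the origin by $SU(1,1)$-invariance plus an approximate-identity argument for the pointwise limit (you use an explicit $(\nu-1)(1-\delta^2)^\nu$ tail bound where the paper invokes dominated convergence), and for the integral and trace identities your Fubini-plus-invariance evaluation of $B_\nu(1)\equiv\tfrac{1}{\nu-1}$ is the same computation as the paper's collapse of $\int_{\D} K^{\nu}(z,x)K^{\nu}(x,z)\,d\iota_{\nu}(z)=K^{\nu}(x,x)$. No gaps.
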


\begin{proof}
If $f \in C(\D) \cap L^1(\D, d \iota)$ and $x \in \mathbb{D}$ such that $x = g \cdot 0$ for $g \in SU(1,1)$, then

\begin{flalign*}
& R_{\nu} R_{\nu}^*(f)(x) = R_{\nu} R_{\nu}^*(f)(g \cdot 0) = R_{\nu} R_{\nu}^*(g^{-1} f)(0),
\end{flalign*}
so it suffices to prove $(\nu - 1) B_{\nu}(f)(0)$ converges to $f(0)$. If we choose $\delta > 0$ such that $|f(z) - f(0)| < \epsilon$ for $|z| < \delta$ we get

\begin{flalign*}
& |(\nu - 1) B_{\nu}(f)(0) - f(0)|
\\ & = (\nu - 1) | \int_{\D} K^{\nu}(0,z) f(z) K^{\nu}(z,0)(1 - |z|^2)^{\nu} d \iota(z) - f(0)|
\\ & \leq (\nu - 1) \int_{B_{\delta}} |f(z) - f(0)| (1 - |z|^2)^{\nu} d \iota(z)
\\ & + (\nu - 1) \int_{\D \backslash B_{\delta}} |f(z) - f(0)| (1 - |z|^2)^{\nu} d \iota(z)
\\ & \leq \epsilon + \int_{\D \backslash B_{\delta}} (\nu - 1) |f(z) - f(0)| (1 - |z|^2)^{\nu} d \iota(z).
\end{flalign*}
For $z \in \D \backslash B_{\delta}$ we see

$$\lim_{\nu \rightarrow \infty} (\nu - 1) |f(z) - f(0)| (1 - |z|^2)^{\nu} = 0,$$
so by Lebesgue's dominated convergence theorem
$$\lim_{\nu \rightarrow \infty} \int_{\D \backslash B_{\delta}} (\nu - 1) |f(z) - f(0)| (1 - |z|^2)^{\nu} d \iota(z) = 0.$$
Hence
$$\lim_{\nu \rightarrow \infty} B_{\nu}(f)(0) = f(0).$$

Now we prove the second part. First we calculate
\begin{flalign*}
& \Tr(R_{\nu}^*(f))  = \int_{\D} (\nu - 1) B_{\nu}(f)(z) d \iota(z)
\\ & = \int_{\D^2} \frac{K^{\nu}(z,x)K^{\nu}(x,z)}{K^{\nu}(x,x)} f(x) d \iota_{\nu} (z) d \iota(x) = \int_{\D} f(x) d \iota(x).
\end{flalign*}
\end{proof}

We also need some theory on $L^2(\D, d \iota)$. Most of the following has been taken from the introductory chapter of \cite{helgGGA}.

\begin{definition}
For $b \in S^1 = \partial \D$, $\lambda \in \C$ and $z \in \D$ let
$$e_{\lambda, b}(z) = \left(\frac{1 - |z|^2}{|z-b|^2}\right)^{\frac{-i \lambda + 1}{2}}.$$
\end{definition}

Using this we define the following, which is some kind of Fourier transform. For functions $f$ on the open disk $\D$, $\lambda \in \C$ and $b \in \D$ we let
$$ \tilde{f}(\lambda, b) = \int_{D} f(z) e_{\lambda, b}(z) d z$$
whenever this is well-defined. Note that it is always well-defined when $f \in C_c(\D)$. We also recall that a holomorphic function $\psi$ is of exponential type $R$ if for each $N \in \mathbb{Z}^+$

$$\sup_{\lambda \in \C} e^{-R |\mathrm{Im}(\lambda)|} (1 + |\lambda|^2)^N |\psi(\lambda)| < \infty.$$

We now have the following Plancherel theorem and inversion theorem \cite[Introduction, Theorem 4.2]{helgGGA}).

\begin{proposition}
\label{helginv}
For functions $f \in C_c^{\infty}(\D)$ we have
$$f(z) = \frac{1}{2 \pi^2} \int_{\R} \int_{S^1} \tilde{f}(- \lambda, b) e_{\lambda,b}(z) |c(\lambda)|^{-2} db d \lambda.$$

Furthermore, the map $f \mapsto \tilde{f}$ is a bijection of $C_c^{\infty}(\D)$ onto the space of holomorphic functions $\psi(\lambda,b)$ of uniform exponential type satisfying
$$ \int_{S^1} e_{- \lambda,b}(z) \psi(\lambda, b) db = \int_{S^1} e_{\lambda,b}(z) \psi(-\lambda,b) db.$$

This map extends to an isometry of $L^2(\D, d \iota)$ onto $L^2(\R^+ \times S^1, |c(\lambda)|^{-2} db d \lambda )$, i.e. we have

$$ \int_{\D} |f(z)|^2 d \iota(z) = \frac{1}{2 \pi^2} \int_{\R} \int_{S^1} | \tilde{f}(\lambda,b)|^2 |c(\lambda)|^{-2} db d \lambda.$$
Here $c$ is the Harish-Chandra $c$-function, given by

$$c(\lambda) = \pi^{- \frac{1}{2}} \frac{ \Gamma( \frac{1}{2} i \lambda)}{\Gamma( \frac{1}{2} (i \lambda + 1))}.$$
Furthermore,

$$ |c(\lambda)|^{-2} = \frac{ \pi \lambda}{2} \tanh( \frac{\pi \lambda}{2}).$$
\end{proposition}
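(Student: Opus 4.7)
\medskip

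\noindent\textbf{Proof proposal.} Since this is a cited result from Helgason's treatise, my plan is to outline the strategy that underlies its proof rather than give a self-contained argument. The starting observation is that the functions $e_{\lambda,b}$ are the non-Euclidean analogues of the plane waves $e^{i\langle \lambda, \cdot\rangle}$; they are joint eigenfunctions of the algebra of $SU(1,1)$-invariant differential operators on $\mathbb{D}$, and the map $b\mapsto e_{\lambda,b}(z)$ parametrizes the horocycles through $b\in S^1=\partial\mathbb{D}$. This makes $\tilde f(\lambda,b) = \int_{\D} f(z) e_{\lambda,b}(z)\,dz$ a genuine non-Euclidean Fourier transform, and the first task is to understand how it decomposes under the symmetries of the setup.

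The plan is then to reduce to the radial situation by using the $K=U(1)$-decomposition. Expanding any $f\in C_c^\infty(\mathbb{D})$ into Fourier series in the angular variable on $S^1$ (both for $z$ and for $b$) decomposes $\tilde f$ into $K$-types, and on each $K$-type the transform becomes a one-dimensional integral in the hyperbolic radial variable. For the trivial $K$-type one recovers, up to normalization, the Harish-Chandra spherical transform, whose Plancherel measure is precisely $|c(\lambda)|^{-2}\,d\lambda$ on $\R^+$. The explicit form of $c(\lambda)$ is obtained from the asymptotic expansion of the spherical functions as $z\to\partial\D$, which for rank one is a classical computation with Gauss' hypergeometric summation; the formula $|c(\lambda)|^{-2}=\frac{\pi\lambda}{2}\tanh(\frac{\pi\lambda}{2})$ then follows from $\Gamma$-function identities. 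The Plancherel identity and the inversion formula on each $K$-type assemble into the stated $L^2$-isometry after summing over $K$-types.

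For the Paley-Wiener part, the strategy is to note that $\lambda\mapsto e_{\lambda,b}(z)$ extends as an entire function of $\lambda$ and that, if $\mathrm{supp}(f)$ lies in a hyperbolic ball of radius $R$, standard estimates on $e_{\lambda,b}(z)$ give exponential bounds of type $R$ in $|{\rm Im}\,\lambda|$ with polynomial decay in ${\rm Re}\,\lambda$. Conversely, given $\psi(\lambda,b)$ of uniform exponential type satisfying the stated symmetry, one recovers $f$ by the inversion formula and uses contour shifts in $\lambda$ to show that $f$ is supported in a ball of the corresponding radius. The symmetry relation $\int_{S^1} e_{-\lambda,b}(z)\psi(\lambda,b)\,db = \int_{S^1}e_{\lambda,b}(z)\psi(-\lambda,b)\,db$ encodes invariance under the Weyl group $\lambda\mapsto -\lambda$ and is precisely what is needed to make the inversion integral from $\R^+$ match the integral over all of $\R$.

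The hardest step in this plan is the Paley-Wiener characterization of the image: the $L^2$-Plancherel identity and the inversion formula on $C_c^\infty$ are relatively direct consequences of the corresponding statements for the Jacobi transform, but the injectivity of the Fourier transform onto the specified space of holomorphic functions of uniform exponential type requires the full force of the contour-shift argument together with the Weyl-group symmetry condition. For the present paper, where only the $L^2$-isometry is used later in the spectral analysis of $B_\nu$, it would suffice to quote this part of the statement from \cite{helgGGA}.
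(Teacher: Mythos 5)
The paper offers no proof of this proposition at all---it is quoted directly from \cite{helgGGA} (Introduction, Theorem 4.2)---and your proposal correctly recognizes this, supplying in addition a faithful outline of the argument in that reference (reduction to $K$-types, the spherical transform with Plancherel measure $|c(\lambda)|^{-2}\,d\lambda$, the $c$-function from the asymptotics of the spherical functions and Gauss's summation, and the Paley--Wiener characterization via contour shifts together with the Weyl-group symmetry). The only cosmetic slip is that the hard half of the Paley--Wiener statement is the \emph{surjectivity} onto the space of functions of uniform exponential type satisfying the symmetry relation, not injectivity (which already follows from the inversion formula); otherwise your sketch is accurate, and your closing remark is exactly right: since the paper only uses the $L^2$-isometry and the eigenvalue property of $B_\nu$ downstream, citing the reference is all that is required here.
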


Now we study the $B_{\nu}$. From \cite[Lemma 3.37]{untup}, we get the following.

\begin{proposition}
\label{berezineigen}
We have
$$ (\nu - 1)B_{\nu}(e_{\lambda, b}) =  b_{\nu}(\lambda) e_{\lambda, b},$$
where
$$ b_{\nu}(\lambda) = \frac{ \Gamma(i \lambda + \nu - \frac{1}{2}) \Gamma(- i \lambda + \nu  - \frac{1}{2})}{\Gamma(\nu) \Gamma(\nu - 1)} = \frac{ |\Gamma(i \lambda + \nu  - \frac{1}{2})|^2}{\Gamma(\nu) \Gamma(\nu - 1)}.$$
\end{proposition}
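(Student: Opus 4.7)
The plan is to exploit the $SU(1,1)$-invariance of $B_\nu$ in order to reduce the proposition to a single integral computation at $z = 0$.

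First, from the explicit expression
$$B_\nu(f)(z) = \int_\D \frac{(1-|z|^2)^\nu(1-|x|^2)^\nu}{|1 - z\bar{x}|^{2\nu}} f(x)\, d\iota(x)$$
derived earlier, the kernel depends on $(z, x)$ only through their pseudo-hyperbolic distance, so $B_\nu$ is an $SU(1,1)$-invariant convolution operator on the symmetric space $\D = SU(1,1)/U(1)$. The functions $e_{\lambda, b}$ are the standard horocyclic plane waves on $\D$: they are joint eigenfunctions of the algebra of $SU(1,1)$-invariant differential operators, and each transforms by a character of the horocyclic subgroup fixing $b \in S^1$. By the general principle that invariant convolution operators on a rank-one symmetric space act on such joint eigenfunctions by a scalar depending only on the spectral parameter (this is the content of the spherical-transform theory underlying Proposition \ref{helginv}), there exists a function $\beta_\nu(\lambda)$ such that
$$(\nu - 1) B_\nu(e_{\lambda, b}) = \beta_\nu(\lambda) e_{\lambda, b}.$$

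Next, I would identify $\beta_\nu(\lambda)$ by evaluating at $z = 0$. Since $|b| = 1$ we have $e_{\lambda, b}(0) = 1$, whence
$$\beta_\nu(\lambda) = (\nu - 1) B_\nu(e_{\lambda, b})(0) = (\nu - 1)\int_\D (1 - |x|^2)^\nu \left(\frac{1 - |x|^2}{|x - b|^2}\right)^{(-i\lambda + 1)/2} d\iota(x).$$
By rotational symmetry in $b$ I may take $b = 1$; writing $x = r e^{i\theta}$ and evaluating the angular integral via the standard identity
$$\frac{1}{2\pi} \int_0^{2\pi} \frac{d\theta}{(1 - 2r\cos\theta + r^2)^s} = {}_2F_1(s, s, 1, r^2),$$
the problem reduces to a one-dimensional integral in $t = r^2 \in (0, 1)$ against a hypergeometric function. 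An application of Euler's integral representation of ${}_2F_1$ followed by Gauss's summation at argument $1$ collapses the integral to a ratio of Gamma functions, and a short manipulation puts this ratio in the form $|\Gamma(i\lambda + \nu - \tfrac{1}{2})|^2 / (\Gamma(\nu)\Gamma(\nu - 1))$ asserted in the statement.

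The step requiring the most care is keeping accurate track of the half-integer shifts in the Gamma arguments produced by the exponent $(-i\lambda + 1)/2$. Because this evaluation is precisely Lemma 3.37 of the Unterberger-Upmeier reference already cited in the statement, I would invoke that lemma rather than reproduce the computation. An alternative avoiding the explicit hypergeometric work is to recognize $e_{\lambda, b}$ as a matrix coefficient of a unitary principal series representation of $SU(1,1)$ and read off $\beta_\nu(\lambda)$ from the intertwining action of $B_\nu$ between the holomorphic discrete series $\Hc_\nu$ and the principal series.
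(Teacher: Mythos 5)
Your proposal is correct and ends up in the same place as the paper: the paper offers no computation at all and simply quotes \cite[Lemma 3.37]{untup}, which is precisely the lemma you invoke after reducing, via $SU(1,1)$-invariance and $e_{\lambda,b}(0)=1$, to the evaluation of $(\nu-1)\int_{\D}(1-|x|^2)^{\nu}e_{\lambda,b}(x)\,d\iota(x)$. Your additional sketch of the direct hypergeometric evaluation is consistent with that reduction, and you rightly flag that the only delicate point is the normalization of the spectral parameter in the Gamma arguments, which is exactly what deferring to the cited lemma takes care of.
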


We want to study $b_{\nu}(\lambda)$ further. We note that
$$|\Gamma(i \lambda + \nu  - \frac{1}{2})|^2 = \frac{\pi}{\cosh(\pi \lambda)} \prod_{k = 1}^{\nu - 1} ( (k - \frac{1}{2})^2 + \lambda^2).$$
We prove a lemma.

\begin{lemma}
\label{boundsbnulambda}
For any $\lambda \in \R$
$$ \lim_{\nu \rightarrow \infty} b_{\nu}(\lambda) = 1.$$
Furthermore, $|\lambda_1| \leq |\lambda_2|$ implies
$$ b_{\nu}(\lambda_2) \leq b_{\nu}(\lambda_1),$$
and thus $b_{\nu}(\lambda)$ is uniformly bounded in $\nu$ and $\lambda$.
\end{lemma}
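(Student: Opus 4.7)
The plan is to establish the monotonicity statement first and then derive both the convergence and the uniform bound from it.

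For monotonicity, I would make the dependence on $\lambda$ transparent by invoking the Weierstrass product
$$\cosh(\pi\lambda) = \prod_{k=1}^{\infty}\frac{(k-\tfrac{1}{2})^2 + \lambda^2}{(k-\tfrac{1}{2})^2}.$$
Substituting this into the factorization of $|\Gamma(i\lambda+\nu-\tfrac{1}{2})|^2$ given just before the lemma, the finite product in the numerator cancels against the first $\nu-1$ factors of the reciprocal infinite product, leaving
$$b_{\nu}(\lambda) = \frac{\pi \prod_{k=1}^{\nu-1}(k-\tfrac{1}{2})^2}{\Gamma(\nu)\Gamma(\nu-1)}\,\prod_{k=\nu}^{\infty}\frac{(k-\tfrac{1}{2})^2}{(k-\tfrac{1}{2})^2 + \lambda^2}.$$
The prefactor is positive and $\lambda$-independent, and each factor of the remaining convergent product is strictly decreasing in $|\lambda|$. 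This yields $b_{\nu}(\lambda_2) \leq b_{\nu}(\lambda_1)$ whenever $|\lambda_1| \leq |\lambda_2|$.

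For the pointwise limit, I would apply the standard asymptotic $\Gamma(z+a)/\Gamma(z) \sim z^a$ (for fixed $a \in \C$, with $z \to \infty$ along the positive real axis) separately to the two factors in
$$b_{\nu}(\lambda) = \frac{\Gamma(\nu-\tfrac{1}{2}+i\lambda)}{\Gamma(\nu)}\cdot\frac{\Gamma(\nu-\tfrac{1}{2}-i\lambda)}{\Gamma(\nu-1)}.$$
The two ratios behave like $\nu^{-1/2+i\lambda}$ and $(\nu-1)^{1/2-i\lambda}\sim \nu^{1/2-i\lambda}$ respectively, so their product tends to $\nu^{0}=1$. Since $b_{\nu}(\lambda)$ is manifestly positive and real, this gives the claimed real limit.

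The uniform bound then follows formally: monotonicity implies $0 < b_{\nu}(\lambda) \leq b_{\nu}(0)$ for every $\lambda \in \R$, and the pointwise statement at $\lambda = 0$ shows that the sequence $\{b_{\nu}(0)\}$ converges and is therefore bounded by some constant $C$, giving $|b_{\nu}(\lambda)| \leq C$ uniformly in $\nu$ and $\lambda$. The main obstacle here is the monotonicity step: without the Weierstrass product for $\cosh$, the competition between the polynomial growth of the product in the numerator and the exponential growth of $\cosh(\pi\lambda)$ is not easy to resolve in a monotone way; once that identity is in hand, the rest is bookkeeping with standard Gamma function asymptotics.
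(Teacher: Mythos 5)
Your proof is correct and follows essentially the same route as the paper: both arguments hinge on the infinite product $\cosh(\pi\lambda)=\prod_{k\geq 1}\bigl(1+\lambda^2/(k-\tfrac12)^2\bigr)$ to cancel the finite product against the first $\nu-1$ factors, isolating a $\lambda$-independent prefactor times a tail product that is manifestly decreasing in $|\lambda|$, from which monotonicity and the uniform bound follow. The only cosmetic difference is in the pointwise limit: the paper sends the cosh-product factor to $\pi$ and the ratio $\Gamma(\nu-\tfrac12)^2/(\pi\Gamma(\nu)\Gamma(\nu-1))$ to $1/\pi$ via the real asymptotic $\Gamma(x+\alpha)\sim\Gamma(x)x^{\alpha}$, whereas you apply the asymptotic $\Gamma(z+a)/\Gamma(z)\sim z^{a}$ directly with the complex shifts $a=-\tfrac12\pm i\lambda$; both are standard and valid.
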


\begin{proof}
Note that

\begin{flalign*}
& b_{\nu}(\lambda) = \frac{ \Gamma(i \lambda + \nu - \frac{1}{2}) \Gamma(- i \lambda + \nu  - \frac{1}{2})}{\Gamma(\nu) \Gamma(\nu - 1)} = \frac{\frac{\pi}{\cosh(\pi \lambda)} \prod_{k = 1}^{\nu - 1} ( (k - \frac{1}{2})^2 + \lambda^2)}{\Gamma(\nu) \Gamma(\nu - 1)}
\\ & = \frac{\frac{\pi}{\cosh(\pi \lambda)} \prod_{k = 1}^{\nu - 1} ( (k - \frac{1}{2})^2 + \lambda^2)}{\prod_{k=1}^{\nu - 1} (k - \frac{1}{2})^2} \frac{\prod_{k=1}^{\nu - 1} (k - \frac{1}{2})^2}{\Gamma(\nu) \Gamma(\nu - 1)}
\\ & = \frac{\pi}{\cosh(\pi \lambda)} \prod_{k = 1}^{\nu - 1} ( 1 + \frac{(\pi \lambda)^2}{(k - \frac{1}{2})^2 \pi^2}) \frac{\prod_{k=1}^{\nu - 1} (k - \frac{1}{2})^2}{\Gamma(\nu) \Gamma(\nu - 1)}.
\end{flalign*}
Now we note

$$ \cosh(x) = \prod_{k=1}^{\infty} (1 + \frac{x^2}{(k - \frac{1}{2})^2 \pi^2}).$$
This immediately implies that $|\lambda_1| \leq |\lambda_2|$ gives
$b_{\nu}(\lambda_2) \leq b_{\nu}(\lambda_1)$ and
$$\lim_{\nu \rightarrow \infty} \frac{\pi}{\cosh(\pi \lambda)} \prod_{k = 1}^{\nu - 1} ( 1 + \frac{(\pi \lambda)^2}{(k - \frac{1}{2})^2 \pi^2}) = \pi.$$ For the term $\frac{\prod_{k=1}^{\nu - 1} (k - \frac{1}{2})^2}{\Gamma(\nu) \Gamma(\nu - 1)}$ we see

\begin{flalign*}
& \frac{\prod_{k=1}^{\nu - 1} (k - \frac{1}{2})^2}{\Gamma(\nu) \Gamma(\nu - 1)} = \frac{ \Gamma(\nu - \frac{1}{2})^2}{\pi \Gamma(\nu) \Gamma(\nu - 1)}.
\end{flalign*}
Then we note that $\Gamma(x + \alpha) \sim \Gamma(x) x^{\alpha}$ as $x$ is going to infinity. Hence
\begin{flalign*}
& \lim_{\nu \rightarrow \infty} \frac{\prod_{k=1}^{\nu - 1} (k - \frac{1}{2})^2}{\Gamma(\nu) \Gamma(\nu - 1)} = \lim_{\nu \rightarrow \infty} \frac{ \Gamma(\nu - \frac{1}{2})^2}{\pi \Gamma(\nu) \Gamma(\nu - 1)}
\\ & = \lim_{\nu \rightarrow \infty} \frac{\Gamma(\nu - 1) (\nu^{\frac{1}{2}})^{-2}}{\pi \Gamma(\nu - 1)^2 \nu^{-1}} = \frac{1}{\pi}.
\end{flalign*}
We conclude that
$$ \lim_{\nu \rightarrow \infty} b_{\nu}(\lambda) = 1.$$
\end{proof}

We see that for any $\lambda \in \R$
$$ b_{\nu}(\lambda) \leq b_{\nu}(0) = \frac{\Gamma(\nu - \frac{1}{2})^2}{\Gamma(\nu) \Gamma(\nu - 1)} \leq 1,$$
where the last inequality is by the log-convexity of the Gamma function. Thus also the norm of $(\nu - 1)B_{\nu}$ is bounded by $1$. We make this a corollary; a similar result with this constant was obtained in \cite[Proposition 3.1]{zhangCon}.

\begin{corollary}
The map
$$B_{\nu}\colon L^2(\D, d \iota) \rightarrow L^2(\D, d \iota)$$
has the property $\nm{B_{\nu}} \leq b_{\nu}(0)^2 = (\frac{\Gamma(\nu - \frac{1}{2})^2}{\Gamma(\nu) \Gamma(\nu - 1)})^2 \leq 1$.
\end{corollary}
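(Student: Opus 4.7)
The plan is to compute the $L^2$-operator norm of $B_\nu$ by exploiting its $SU(1,1)$-invariance, which (together with the eigenvalue identity of Proposition \ref{berezineigen}) makes $B_\nu$ a Fourier multiplier under the Helgason-Fourier transform of Proposition \ref{helginv}. Once the multiplier is identified as $b_\nu(\lambda)/(\nu-1)$, the operator norm of $B_\nu$ is the supremum of this multiplier, which Lemma \ref{boundsbnulambda} tells us is attained at $\lambda = 0$.

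Concretely, I would first check that for $f \in C_c^\infty(\D)$ one has $\widetilde{B_\nu f}(\lambda, b) = \frac{b_\nu(\lambda)}{\nu - 1}\, \tilde f(\lambda, b)$. This follows by writing $B_\nu f$ as an integral against its kernel $K^\nu(z,x)K^\nu(x,z)/(K^\nu(z,z)K^\nu(x,x))$ (computed just after Definition \ref{berezindef}), applying Fubini to interchange with the integral defining $\tilde{\cdot}$, and then invoking Proposition \ref{berezineigen} on the inner integral. Combined with the Plancherel isometry, this identifies $(\nu-1)B_\nu$ with multiplication by $b_\nu(\lambda)$ on $L^2(\R^+ \times S^1, |c(\lambda)|^{-2} db\,d\lambda)$, so its operator norm equals $\sup_{\lambda \in \R} b_\nu(\lambda)$.

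Second, Lemma \ref{boundsbnulambda} yields monotonicity of $b_\nu(\lambda)$ in $|\lambda|$, so this supremum equals $b_\nu(0) = \Gamma(\nu - \tfrac{1}{2})^2/(\Gamma(\nu)\Gamma(\nu-1))$. Log-convexity of $\log\Gamma$ applied at the midpoint of $[\nu-1,\nu]$ then gives $\Gamma(\nu-\tfrac{1}{2})^2 \leq \Gamma(\nu-1)\Gamma(\nu)$, so $b_\nu(0) \leq 1$. Chaining these inequalities closes the bound in the form stated.

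The main subtlety is that the generalized eigenfunctions $e_{\lambda,b}$ do not lie in $L^2(\D, d\iota)$, so the eigenvalue identity of Proposition \ref{berezineigen} does not \emph{a priori} control the $L^2$-operator norm. The multiplier verification on $C_c^\infty(\D)$ sketched above circumvents this; once established there, extension by density and reading off the norm from the Plancherel isometry is the standard Fourier-multiplier argument, and no further delicate analysis is required.
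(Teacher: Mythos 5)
Your proposal follows essentially the same route as the paper: the paper likewise reads off the multiplier $b_{\nu}(\lambda)$ from Proposition \ref{berezineigen}, applies the Plancherel isometry of Proposition \ref{helginv}, bounds the multiplier by $b_{\nu}(0)$ via the monotonicity in Lemma \ref{boundsbnulambda}, and uses log-convexity of $\Gamma$ for $b_{\nu}(0)\leq 1$. You are in fact more careful than the paper on two points it glosses over --- the justification that the eigenfunction identity (stated for the non-$L^2$ functions $e_{\lambda,b}$) really gives the multiplier relation on $L^2$, and the bookkeeping of the factor $\nu-1$ separating $B_{\nu}$ from $(\nu-1)B_{\nu}$, which the paper's statement and proof treat loosely --- but the underlying argument is the same.
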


\begin{proof}
First we remark that for any $f \in L^2(\D)$
$$ \widetilde{B_{\nu}(f)}(\lambda,b) =  b_{\nu}(\lambda) \tilde{f}(\lambda,b).$$
Applying the Plancherel theorem, Proposition \ref{helginv}, we get
\begin{flalign*}
& \nm{B_{\nu}(f)}^2 = \frac{1}{2 \pi^2} \int_{\R} \int_{S^1} |b_{\nu}(\lambda) \tilde{f}(\lambda,b)|^2 |c(\lambda)|^{-2} db d \lambda
\\ & \leq b_{\nu}(0)^2 \frac{1}{2 \pi^2} \int_{\R} \int_{S^1} |\tilde{f}(\lambda,b)|^2 |c(\lambda)|^{-2} db d \lambda = b_{\nu}(0)^2 \nm{f}^2.
\end{flalign*}
\end{proof}

We prove a bound for $(\nu - 1) R_{\nu}$.

\begin{proposition}
\label{Rmubdd}
For any $\nu > 1$ we have
$$ \frac{1}{\nu - 1} \nm{(\nu - 1) R_{\nu}^*(f)}_{2}^2 \leq \nm{f}_{2}^2.$$
\end{proposition}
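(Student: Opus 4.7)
My plan is to reduce the bound to the already-analysed spectral behaviour of the Berezin transform $B_\nu = R_\nu R_\nu^*$. First I would observe that, by definition of $R_\nu^*$ as the adjoint of $R_\nu : S^2(\Hc_\nu) \to L^2(\D, d\iota)$,
$$\nm{R_\nu^*(f)}_{2}^2 = \langle R_\nu^*(f), R_\nu^*(f) \rangle_{S^2(\Hc_\nu)} = \langle R_\nu R_\nu^*(f), f \rangle_{L^2(\D, d\iota)} = \langle B_\nu(f), f \rangle_{L^2(\D, d\iota)}.$$
Multiplying through by $(\nu-1)$, the claimed inequality is equivalent to
$$\langle (\nu - 1) B_\nu(f), f \rangle_{L^2(\D, d\iota)} \leq \nm{f}_{2}^2.$$

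To prove this, I would combine the Helgason–Plancherel theorem (Proposition \ref{helginv}) with the eigenvalue computation (Proposition \ref{berezineigen}), which gives $(\nu-1) B_\nu(e_{\lambda,b}) = b_\nu(\lambda) e_{\lambda,b}$, equivalently $\widetilde{(\nu - 1) B_\nu(f)}(\lambda, b) = b_\nu(\lambda) \tilde{f}(\lambda, b)$. For $f \in C_c^\infty(\D)$ Plancherel then yields
$$\langle (\nu-1) B_\nu(f), f \rangle = \frac{1}{2\pi^2} \int_{\R} \int_{S^1} b_\nu(\lambda) |\tilde{f}(\lambda,b)|^2 |c(\lambda)|^{-2} \, db \, d\lambda.$$
By Lemma \ref{boundsbnulambda} and the log-convexity remark immediately after it, $b_\nu(\lambda) \leq b_\nu(0) \leq 1$ for all $\lambda \in \R$, so the right-hand side is bounded above by $\frac{1}{2\pi^2} \int_\R \int_{S^1} |\tilde{f}(\lambda,b)|^2 |c(\lambda)|^{-2} \, db \, d\lambda = \nm{f}_2^2$, again by Plancherel.

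There is no substantial obstacle here: the only point needing care is extending the spectral identity from $C_c^\infty(\D)$ to all of $L^2(\D, d\iota)$, which follows from the density of $C_c^\infty(\D)$ in $L^2(\D, d\iota)$ together with the continuity of $B_\nu$ on $L^2(\D, d\iota)$ established just before the statement to be proved.
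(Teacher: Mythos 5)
Your proposal is correct and follows essentially the same route as the paper: the paper's proof likewise rewrites $\frac{1}{\nu-1}\nm{(\nu-1)R_{\nu}^*(f)}_2^2$ as $\langle (\nu-1)B_{\nu}(f), f\rangle$ via the adjoint identity and then invokes the bound $\nm{(\nu-1)B_{\nu}}\leq b_{\nu}(0)\leq 1$ coming from the Plancherel/eigenvalue analysis of the Berezin transform. You merely spell out the spectral step that the paper leaves implicit (and your version incidentally corrects what appears to be a typo, $B_{\nu-1}$ for $B_{\nu}$, in the paper's displayed computation).
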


\begin{proof}
We see that
\begin{flalign*}
& \frac{1}{\nu - 1} \nm{(\nu - 1) R_{\nu}^*(f)}_{2}^2 = \langle (\nu - 1) R_{\nu}^*(f), R_{\nu}^*(f) \rangle_{2}
\\ & = \langle (\nu - 1) B_{\nu - 1}(f), f \rangle \leq \nm{f}_{2}^2.
\end{flalign*}
This gives the result.
\end{proof}

\section{Quantum channels}
\label{channelingsec}

In this section we define the quantum channels we will study. Usually quantum channels are defined for finite-dimensional spaces \cite{petz}, and are then defined to be completely positive trace-preserving maps (CPTP). In the infinite-dimensional case, less has been done. In \cite{haap} channels are defined as completely positive unital weak-$*$ continuous maps between $C^*$-algebras. The maps $\mathcal{T}_{\mu,k}^{\nu}$ we define in Definition \ref{channeldef} are clearly unital, and we prove they are completely positive and trace-preserving up to a constant in Proposition \ref{Tquantumchannel}, like the channels in \cite{haas}. We prove they are weak-$*$ continuous in Proposition \ref{qtchannel}. In fact, for a compact group equivariant quantum channels on irreducible representations which are trace-preserving have to be unital up to a constant, and vice versa. Hence we call the $\mathcal{T}_{\mu,k}^{\nu}$ quantum channels.

\begin{definition}
\label{channeldef}
Associated to the Decomposition (\ref{TensorProdDec}) we define
$$\mathcal{T}_{\mu,k}^{\nu}\colon B(\Hc_{\mu}) \rightarrow B(\Hc_{\mu + \nu + 2 k})$$
by $\mathcal{T}_{\mu,k}^{\nu}(A) = P_k (A \otimes I_{\nu}) P_k^*$. Here $I_{\nu}$ is the identity operator on $\Hc_{\nu}$.
\end{definition}

We claim this map sends trace-class operators to trace-class operators and is trace-preserving up to a constant. We first need some theory, and normalize the Haar measure on $SU(1,1)$ such that
$$ \int_{SU(1,1)} f(g) dg = \int_{\D} \int_{U(1)} f(zk) dk d \iota(z),$$
where we remember that $\D = SU(1,1)/U(1)$. Here $dk$ is normalized to integrate to $1$. Now we need the following version of Schur's lemma. It is true for general discrete series representations \cite{hari} and our result could be obtained by general theory, but for completeness we give an elementary proof using our normalization of the Haar measure.

\begin{proposition}
The representations $\Hc_{\nu}$ have square integrable matrix coefficients and for $v_1, v_2, w_1, w_2 \in \Hc_{\nu}$
$$ \int_{SU(1,1)} \langle g \cdot v_1, w_1 \rangle_{\nu} \li{ \langle g \cdot v_2, w_2 \rangle}_{\nu} dg = \frac{1}{\nu-1} \langle v_1, v_2 \rangle_{\nu} \li{ \langle w_1, w_2 \rangle}_{\nu}.$$
\end{proposition}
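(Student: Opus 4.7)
The plan is to prove the statement by the standard Schur orthogonality argument: show that the integral, as a sesquilinear pairing, is intertwining in a suitable sense, invoke (a version of) Schur's lemma to reduce the pairing to $c_\nu\, \langle v_1, v_2\rangle_\nu\, \overline{\langle w_1, w_2\rangle}_\nu$, and then pin down the constant $c_\nu$ by evaluating on a specific vector using the explicit Haar decomposition given just above the statement.

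First I would establish square integrability on one convenient pair of vectors. Take $u_0 \equiv 1 \in \Hc_\nu$; since $1$ is (up to normalization) the reproducing kernel at $0$, we have $\langle f, 1\rangle_\nu = f(0)$ for every $f \in \Hc_\nu$, and thus $\langle g \cdot u_0, u_0\rangle_\nu = (g \cdot u_0)(0) = \overline{a}^{-\nu}$ where $a$ is the top-left entry of $g$. Using the given Haar decomposition with coset representatives $g_z = (1-|z|^2)^{-1/2}\begin{pmatrix} 1 & -\overline{z} \\ -z & 1\end{pmatrix}$ (so $g_z \cdot 0 = z$), a short matrix computation shows $|a(g_z k)|^{-2\nu} = (1-|z|^2)^\nu$ for all $k \in U(1)$. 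Therefore
\begin{equation*}
\int_{SU(1,1)} |\langle g \cdot u_0, u_0\rangle_\nu|^2\, dg = \int_{\D} (1-|z|^2)^\nu \, d\iota(z) = \int_{\D} (1-|z|^2)^{\nu - 2}\, \frac{dz}{\pi} = \frac{1}{\nu - 1},
\end{equation*}
by the standard Beta integral. This gives both square integrability and the candidate constant.

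Next I would extend square integrability from $u_0$ to all of $\Hc_\nu$ by a $K$-finiteness argument. Because $u_0$ is $K$-fixed up to a scalar and the $K$-finite vectors are dense and span all other weight spaces via the universal enveloping algebra, every matrix coefficient between $K$-finite vectors is bounded pointwise by a finite sum of derivatives of $\langle g \cdot u_0, u_0\rangle_\nu$, which remain square integrable by the same Haar decomposition. Once square integrability is in hand on a dense subspace, one sets
\begin{equation*}
B(v_1, v_2, w_1, w_2) \coloneqq \int_{SU(1,1)} \langle g \cdot v_1, w_1\rangle_\nu\, \overline{\langle g \cdot v_2, w_2\rangle}_\nu\, dg,
\end{equation*}
which is sesquilinear in $(v_1, w_1)$ against $(v_2, w_2)$. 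Left-invariance of Haar measure and unitarity imply $B(h v_1, h v_2, w_1, w_2) = B(v_1, v_2, w_1, w_2)$, and similarly in the second pair. Fixing $v_2, w_1, w_2$, the map $v_1 \mapsto B(v_1, v_2, w_1, w_2)$ defines a bounded antilinear functional, hence an element $T(v_2, w_1, w_2) \in \Hc_\nu$ that intertwines the action with respect to $v_2$; Schur's lemma (in its form for bounded intertwining operators between irreducibles) forces it to be a scalar multiple of $v_2$. Iterating on the second pair of vectors gives $B(v_1, v_2, w_1, w_2) = c_\nu\, \langle v_1, v_2\rangle_\nu\, \overline{\langle w_1, w_2\rangle}_\nu$ for a single constant $c_\nu$, and setting $v_1 = v_2 = w_1 = w_2 = u_0$ and comparing with the calculation above yields $c_\nu = \tfrac{1}{\nu - 1}$.

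The main obstacle is the Schur step in the infinite-dimensional setting: one must justify that Schur's lemma applies to the unbounded map $v_1 \mapsto T(v_1, \cdot, \cdot)$ without leaning on the full discrete-series theory that the author wishes to avoid. The cleanest way is to first prove the identity on $K$-finite vectors, where convergence and intertwining are automatic, and then extend to $\Hc_\nu$ by density together with the uniform bound $|\langle g \cdot v, w\rangle_\nu| \leq \|v\|_\nu \|w\|_\nu$, which keeps the integrand dominated by an $L^1$ function of $g$ coming from the $u_0$ computation above.
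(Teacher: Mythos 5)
Your determination of the constant is exactly the paper's argument: both evaluate at $v_1=v_2=w_1=w_2=1$, use $\langle g\cdot 1,1\rangle_{\nu}=\li{a}^{-\nu}$ (since $1$ is the reproducing kernel at $0$), push the right-$U(1)$-invariant integrand down to $\D$ via $|a|^{-2}=1-|z|^2$, and obtain $\int_{\D}(1-|z|^2)^{\nu}\,d\iota(z)=\frac{1}{\nu-1}$. Where you genuinely differ is upstream: the paper simply cites Knapp's Proposition 9.6 for the existence of a constant $C$ with $\int_{G}\langle g\cdot v_1,w_1\rangle_{\nu}\li{\langle g\cdot v_2,w_2\rangle}_{\nu}\,dg=C\langle v_1,v_2\rangle_{\nu}\li{\langle w_1,w_2\rangle}_{\nu}$, whereas you re-derive this orthogonality relation from scratch via an invariant sesquilinear form, Riesz representation, and Schur's lemma. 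That self-contained route is the standard one and is workable, and it buys independence from the cited general theory; what it costs is two technical steps you should tighten. First, your extension of square-integrability from $u_0=1$ to all $K$-finite vectors by ``bounding by derivatives'' is not automatic: square-integrability of $\langle g\cdot u_0,u_0\rangle_{\nu}$ does not formally imply square-integrability of its derivatives, so you should instead compute $\langle g\cdot z^m,z^n\rangle_{\nu}$ in $KAK$ coordinates and check the decay $e^{-\nu t}$ against the Haar density $\sinh(2t)$ (which converges precisely because $\nu>1$). Second, the uniform bound $|\langle g\cdot v,w\rangle_{\nu}|\le\nm{v}_{\nu}\nm{w}_{\nu}$ is a constant and is not integrable over the noncompact group, so it cannot serve as an $L^1$ dominating function; the correct way to pass from the dense subspace to all of $\Hc_{\nu}$ is to note that $v\mapsto(g\mapsto\langle g\cdot v,w\rangle_{\nu})$ is a closed, hence bounded, operator into $L^2(SU(1,1))$ and use Cauchy--Schwarz there. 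With those repairs your argument is correct and recovers the same constant as the paper.
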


\begin{proof}
By \cite[Proposition 9.6]{knapp} there is some constant $C$ such that
$$ \int_{SU(1,1)} \langle g \cdot v_1, w_1 \rangle_{\nu} \li{ \langle g \cdot v_2, w_2 \rangle}_{\nu} dg = C \langle v_1, v_2 \rangle_{\nu} \li{ \langle w_1, w_2 \rangle}_{\nu}.$$
We calculate $C$ by evaluating at $v_1 = v_2 = w_1 = w_2 = 1$. We see that $g \cdot 1 (z) = (-b z + \li{a})^{- \nu}$ for $g = \begin{pmatrix} a & b \\ \li{b} & \li{a} \end{pmatrix}$. Hence
\begin{flalign*}
& \langle g \cdot 1, 1 \rangle_{\nu} = \int_{\D} (- b z + \li{a})^{-\nu} d \iota_{\nu}(z) = \li{a}^{- \nu} \langle K_{ \frac{\li{b}}{a}}, 1 \rangle_{\nu} = \li{a}^{-\nu},
\end{flalign*}
and
\begin{flalign*}
& \int_{SU(1,1)} \langle g \cdot 1, 1 \rangle_{\nu} \li{ \langle g \cdot 1,1 \rangle_{\nu}} dg = \int_{SU(1,1)} |a|^{-2 \nu} d g.
\end{flalign*}
This is right-invariant under $K = U(1) = \{ \begin{pmatrix} e^{i \theta} & 0 \\ 0 & e^{-i \theta} \end{pmatrix} \mid \theta \in \R \}$, and we get an integral over $\D \cong SU(1,1) / U(1)$. Here $g$ is identified with
$$z \coloneqq g \cdot 0 = - \frac{\li{b}}{\li{a}} \in \D,$$
and $|z|^2 = \frac{|b|^2}{|a|^2} = \frac{|a|^2 - 1}{|a|^2}$, implying that $|a|^2 = \frac{1}{1 - |z|^2}$. Hence
$$ C = \int_{SU(1,1)} |a|^{-2 \nu} d g = \int_{\D} (1 - |z|^2)^{\nu} d \iota(z) = \frac{1}{\nu - 1}.$$
\end{proof}

We want to say something about the trace of $\mathcal{T}_{\mu,k}^{\nu}(A)$. First we need a lemma, restating Schur's lemma for $L^2$-matrix coefficients and trace-class operators.

\begin{lemma}
\label{traceclassint}
A positive operator $A \in B(\Hc_{\nu})$ is trace-class if and only if $\int_{SU(1,1)} \langle g A g^{-1} v, v \rangle_{\nu} dg < \infty$ for all $v \in \Hc_{\nu}$. In that case
$$\int_{SU(1,1)} gAg^{-1} dg = \frac{\Tr(A)}{\nu - 1} I_{\nu}.$$
\end{lemma}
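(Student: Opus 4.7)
The plan is to deduce the lemma from the Schur orthogonality relation established in the preceding proposition.

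By positivity, write $A$ in spectral form $A = \sum_i \lambda_i P_i$, where $\{e_i\}$ is an orthonormal eigenbasis with eigenvalues $\lambda_i \geq 0$ and $P_i$ is the orthogonal projection onto $\C e_i$; then $\Tr(A) = \sum_i \lambda_i \in [0,\infty]$. Since each $g \in SU(1,1)$ acts unitarily,
$$\langle gAg^{-1}v, v \rangle_{\nu} = \sum_i \lambda_i |\langle g^{-1}v, e_i \rangle_{\nu}|^2.$$
All terms being non-negative, Tonelli's theorem allows interchanging the sum and the Haar integral. Applying the Schur orthogonality relation from the preceding proposition with $v_1 = v_2 = v$ and $w_1 = w_2 = e_i$, and using unimodularity of $SU(1,1)$ (to replace $g^{-1}$ by $g$ in the integral), each term evaluates to
$$\int_{SU(1,1)} |\langle g^{-1}v, e_i\rangle_{\nu}|^2 \, dg = \frac{\nm{v}_{\nu}^2 \nm{e_i}_{\nu}^2}{\nu - 1} = \frac{\nm{v}_{\nu}^2}{\nu - 1}.$$
Summing over $i$ gives
$$\int_{SU(1,1)} \langle gAg^{-1}v, v\rangle_{\nu} \, dg = \frac{\Tr(A)\nm{v}_{\nu}^2}{\nu - 1}$$
as an identity in $[0,\infty]$. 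This yields the first assertion, since the left side is finite for all $v$ if and only if it is finite for some $v \neq 0$ if and only if $\Tr(A) < \infty$.

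For the operator identity, assume $A$ is trace-class. The diagonal computation above, combined with polarization and Cauchy--Schwarz, shows that the sesquilinear form $(v,w) \mapsto \int_{SU(1,1)} \langle gAg^{-1}v, w\rangle_{\nu} \, dg$ is well-defined and bounded on $\Hc_{\nu} \times \Hc_{\nu}$, so it is represented by a bounded operator which we interpret as the weak integral $T = \int_{SU(1,1)} gAg^{-1}\, dg$. Left-invariance of the Haar measure then gives $hTh^{-1} = T$ for every $h \in SU(1,1)$, so $T$ intertwines the irreducible unitary representation $\Hc_{\nu}$ with itself. Schur's lemma forces $T = c \, I_{\nu}$, and comparison with $\langle Tv, v\rangle_{\nu} = c\nm{v}_{\nu}^2$ fixes $c = \Tr(A)/(\nu - 1)$.

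The main obstacle is the passage from the diagonal identity to the full sesquilinear form defining $T$: the diagonal bound controls only $\langle Tv, v\rangle_{\nu}$, so one must use polarization to express $\langle Tv, w\rangle_{\nu}$ in terms of four diagonal quantities and invoke a Cauchy--Schwarz-type estimate to conclude that $T$ is bounded. A secondary point worth checking is that Schur's lemma in the required form (every bounded intertwiner of an irreducible unitary representation is a scalar) applies, which is standard for the discrete series but should be noted explicitly.
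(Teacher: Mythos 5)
Your overall strategy is sound and close to the paper's: both arguments reduce everything to the Schur orthogonality relation of the preceding proposition, first on rank-one pieces and then by summation. The one genuine gap is at your very first step: you write $A=\sum_i\lambda_i\,e_i\otimes e_i^*$ with an orthonormal \emph{eigenbasis}. A positive trace-class operator is compact, so this is harmless for the ``only if'' direction; but the ``if'' direction must apply to an arbitrary positive $A\in B(\Hc_{\nu})$, and such an operator need not admit an orthonormal basis of eigenvectors (it may have continuous spectrum). This case cannot be discarded, because the lemma is later invoked precisely to \emph{deduce} that the positive bounded operator $\mathcal{T}_{\mu,k}^{\nu}(A)$ is trace-class from finiteness of the integral. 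The repair is short: for an arbitrary orthonormal basis $\{e_i\}$ write
$$\langle gAg^{-1}v,v\rangle_{\nu}=\nm{A^{1/2}g^{-1}v}_{\nu}^2=\sum_i\left|\langle g^{-1}v,A^{1/2}e_i\rangle_{\nu}\right|^2,$$
apply Tonelli and the orthogonality relation termwise, and use $\sum_i\nm{A^{1/2}e_i}_{\nu}^2=\Tr(A)$ to get your identity in $[0,\infty]$; the rest of your argument then goes through verbatim. (Unimodularity is not actually needed here: $|\langle g^{-1}v,u\rangle_{\nu}|=|\langle g\cdot u,v\rangle_{\nu}|$ by unitarity, so one applies the relation with $v_1=v_2=u$ and $w_1=w_2=v$.)

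Beyond this, your route differs from the paper's in two mild but noteworthy ways. For the converse implication the paper truncates $A$ by finite-rank diagonal pieces and shows the integrals diverge, whereas you prove $\int_{SU(1,1)}\langle gAg^{-1}v,v\rangle_{\nu}\,dg=\Tr(A)\nm{v}_{\nu}^2/(\nu-1)$ as an equality in $[0,\infty]$, which settles both implications at once and is, once the eigenbasis issue above is fixed, the cleaner of the two arguments. For the operator identity the paper avoids polarization and Schur's lemma altogether: applying the same orthogonality relation with $w_1=w$, $w_2=v$ already yields the off-diagonal matrix elements $\int_{SU(1,1)}\langle gAg^{-1}v,w\rangle_{\nu}\,dg=\frac{\Tr(A)}{\nu-1}\langle v,w\rangle_{\nu}$ for $A=x\otimes x^*$, and hence for positive trace-class $A$ by summing the spectral decomposition; so you could shorten your final paragraph by computing $\langle Tv,w\rangle_{\nu}$ directly instead of inferring boundedness of the form and then invoking irreducibility. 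Your Schur's-lemma route is nevertheless correct, and your closing caveats (polarization to control the off-diagonal terms, applicability of Schur's lemma to the discrete series) are exactly the right points to flag.
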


\begin{proof}
First we prove the left to right implication. We assume $A = x \otimes x^*$, then we see that the integral $\int_{SU(1,1)} gAg^{-1} dg$ converges and
\begin{flalign*}
& \int_{SU(1,1)} \langle g A g^{-1} v, w \rangle_{\nu} dg = \int_{SU(1,1)} \langle g \cdot x, w \rangle_{\nu} \li{ \langle g \cdot x, v \rangle_{\nu} } dg
\\ & = \frac{1}{\nu - 1} \nm{x}_{\nu}^2 \li{\langle w, v \rangle_{\nu}},
\end{flalign*}
implying that
$$\int_{SU(1,1)} gAg^{-1} dg = \frac{\langle x, x \rangle_{\nu}}{\nu - 1} I_{\nu} = \frac{\Tr(A)}{\nu - 1} I_{\nu}.$$
This gives $\int_{SU(1,1)} \langle g A g^{-1} v, v \rangle_{\nu} dg = \frac{\Tr(A)}{\nu - 1} \langle v, v \rangle_{\nu} < \infty$. The case for $A$ a positive trace-class operator follows by the spectral decomposition.

Now assume $A$ is positive but not trace-class. Let $(e_i)_{i=1}^{\infty}$ be an orthonormal basis for $\Hc_{\nu}$ and
$$A_n = \sum_{i=1}^n \langle A e_i, e_i \rangle_{\nu} e_i \otimes e_i^*.$$
Then we see that $A \geq A_n$ and thus also $gAg^{-1} \geq gA_n g^{-1}$ for all $g \in SU(1,1)$. Note that $A_n$ is trace-class. Then
\begin{flalign*}
& \int_{SU(1,1)} \langle gAg^{-1}v, v \rangle_{\nu} d g \geq \int_{SU(1,1)} \langle gA_n g^{-1} v, v \rangle_{\nu} d g
\\ & = \frac{\Tr(A_n)}{\nu - 1} \langle v, v \rangle_{\nu} = \frac{\langle v, v \rangle}{\nu - 1} \sum_{i=1}^n \langle A e_i, e_i \rangle_{\nu},
\end{flalign*}
so the integral $\int_{SU(1,1)} \langle gAg^{-1} v, v \rangle_{\nu} dg$ diverges. We have proven our statement.
\end{proof}

We prove that $\mathcal{T}_{\mu,k}^{\nu}$ is trace preserving up to a constant.

\begin{proposition}
\label{Tquantumchannel}
The map $\mathcal{T}_{\mu,k}^{\nu}$ is completely positive and $A \in S^1(\Hc_{\mu})$ implies $\mathcal{T}_{\mu,k}^{\nu}(A) \in S^1(\Hc_{\mu + \nu + 2k})$. In that case
$$\Tr(\mathcal{T}_{\mu,k}^{\nu}(A)) = \frac{\mu + \nu + 2k -1}{\mu - 1} \Tr(A).$$
\end{proposition}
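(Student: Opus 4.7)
The plan is to establish complete positivity by factoring $\mathcal{T}_{\mu,k}^{\nu}$ as a composition of obviously completely positive maps, and then to obtain the trace formula via the equivariance of $\mathcal{T}_{\mu,k}^{\nu}$ combined with the Schur-lemma type statement of Lemma \ref{traceclassint}.

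For complete positivity, I would write $\mathcal{T}_{\mu,k}^{\nu} = \Phi_2 \circ \Phi_1$ where $\Phi_1\colon A \mapsto A \otimes I_{\nu}$ and $\Phi_2\colon B \mapsto P_k B P_k^*$. The first map is completely positive since amplification with the identity is completely positive, and the second is completely positive because it is a single Kraus operator conjugation. Their composition is therefore completely positive.

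For the trace part, the key observation is that $\mathcal{T}_{\mu,k}^{\nu}$ is $SU(1,1)$-equivariant: since $P_k$ intertwines $\Hc_{\mu} \otimes \Hc_{\nu}$ and $\Hc_{\mu + \nu + 2k}$, one checks directly that $\mathcal{T}_{\mu,k}^{\nu}(g A g^{-1}) = g \mathcal{T}_{\mu,k}^{\nu}(A) g^{-1}$ for all $g \in SU(1,1)$. I would also record $\mathcal{T}_{\mu,k}^{\nu}(I_{\mu}) = P_k P_k^* = I_{\mu+\nu+2k}$, which follows since $P_k^*$ is an isometry onto $\Hc_{\mu+\nu+2k}$. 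Now assume first that $A \in S^1(\Hc_{\mu})$ is positive. For $v \in \Hc_{\mu+\nu+2k}$, expand $w \coloneqq P_k^*(v) = \sum_j w_j \otimes e_j$ in an orthonormal basis $(e_j)$ of $\Hc_{\nu}$ and compute
\begin{flalign*}
& \int_{SU(1,1)} \langle g \mathcal{T}_{\mu,k}^{\nu}(A) g^{-1} v, v \rangle \, dg = \int_{SU(1,1)} \langle (g A g^{-1} \otimes I_{\nu}) w, w \rangle \, dg \\
& = \sum_j \int_{SU(1,1)} \langle g A g^{-1} w_j, w_j \rangle \, dg = \frac{\Tr(A)}{\mu - 1} \sum_j \nm{w_j}^2 = \frac{\Tr(A)}{\mu - 1} \nm{v}^2,
\end{flalign*}
where the interchange of sum and integral is justified by positivity of the integrand (Tonelli) and the last step uses Lemma \ref{traceclassint} applied to $A$ together with the isometry property $\nm{P_k^* v} = \nm{v}$. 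Lemma \ref{traceclassint} now runs in the opposite direction on $\mathcal{T}_{\mu,k}^{\nu}(A)$: finiteness of the integral above for all $v$ implies $\mathcal{T}_{\mu,k}^{\nu}(A) \in S^1(\Hc_{\mu+\nu+2k})$, and the same lemma identifies the integral as $\frac{\Tr(\mathcal{T}_{\mu,k}^{\nu}(A))}{\mu+\nu+2k-1} \nm{v}^2$. Comparing the two evaluations yields the trace formula for positive $A$, and a standard decomposition $A = (A_1^+ - A_1^-) + i(A_2^+ - A_2^-)$ into positive trace-class parts extends both the trace-class conclusion and the identity to general $A \in S^1(\Hc_{\mu})$ by linearity.

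The main obstacle I anticipate is purely bookkeeping: justifying the interchange of $\mathcal{T}_{\mu,k}^{\nu}$ (equivalently, of the operator $P_k^*(\,\cdot\,) P_k$) with the weakly convergent $SU(1,1)$-integral. This is precisely why I pull the channel inside by expanding $P_k^* v$ in the orthonormal basis of $\Hc_{\nu}$, reducing everything to positive scalar integrals on $\Hc_{\mu}$ where Lemma \ref{traceclassint} applies verbatim; the $\nu$-tensor factor contributes nothing beyond the Plancherel sum $\sum_j \nm{w_j}^2 = \nm{P_k^* v}^2$.
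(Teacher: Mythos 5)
Your proof is correct and follows essentially the same route as the paper: complete positivity via the factorization through $A \mapsto A \otimes I_{\nu}$ and $B \mapsto P_k B P_k^*$, and the trace formula by applying Lemma \ref{traceclassint} in both directions through the equivariance of $\mathcal{T}_{\mu,k}^{\nu}$. Your only departure is a cosmetic improvement: by expanding $P_k^* v$ in an orthonormal basis of the $\nu$-factor and invoking Tonelli, you justify at the scalar level the interchange of the $SU(1,1)$-integral with $P_k(\,\cdot\,)P_k^*$, which the paper performs directly on the operator-valued integral $\int_{SU(1,1)} g(A \otimes I_{\nu})g^{-1}\,dg$ even though $A \otimes I_{\nu}$ is not itself trace-class.
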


\begin{proof}
It is obvious that the map $\mathcal{T}_{\mu,k}^{\nu}$ is completely positive, as it is the composition of the $*$-homomorphism $A \mapsto A \otimes I$ and $A \mapsto P_k A  P_k^*$. Now let $A \in S^1(\Hc_{\mu})$ be a trace-class operator. Assume $A$ is positive, then so is $\mathcal{T}_{\mu,k}^{\nu}(A)$. By Lemma \ref{traceclassint} we see that
$$\int_{SU(1,1)} g A g^{-1} d g = \frac{\Tr(A)}{\mu - 1}.$$
Thus
\begin{flalign*}
& \int_{SU(1,1)} g \mathcal{T}_{\mu,k}^{\nu}(A) g^{-1} dg = \int_{SU(1,1)} g P_k(A \otimes I_{\nu})P_k^* g^{-1} dg
\\ & = P_k \int_{SU(1,1)} g(A \otimes I_{\nu})g^{-1} dg P_k^* = \frac{\Tr(A)}{\mu - 1} I_{\mu + \nu + 2k}.
\end{flalign*}
We can now deduce by Lemma \ref{traceclassint} that $\mathcal{T}_{\mu,k}^{\nu}(A)$ is trace-class, and that its trace must be
$$ \Tr(\mathcal{T}_{\mu,k}^{\nu}(A)) = \frac{\mu + \nu + 2k - 1}{\mu-1} \Tr(A).$$
For general $A$, we write $A$ as a linear combination of positive operators, which will again be trace-class, and use linearity of $\mathcal{T}_{\mu,k}^{\nu}$ and the trace.
\end{proof}

We prove some more properties of this operator.

\begin{proposition}
\label{qtchannel}
The map $\mathcal{T}_{\mu,k}^{\nu}$ is unital, weak-$*$ continuous, and it sends compact operators to compact operators. As an operator
$$ \mathcal{T}_{\mu,k}^{\nu}\colon B(\Hc_{\mu}) \rightarrow B(\Hc_{\mu + \nu + 2k}),$$
we have $\nm{\mathcal{T}_{\mu,k}^{\nu}} = 1$. Moreover, as an operator
$$ \mathcal{T}_{\mu,k}^{\nu}\colon S^p(\Hc_{\mu}) \rightarrow S^p(\Hc_{\mu + \nu + 2k}) $$
we have $\nm{\mathcal{T}_{\mu,k}^{\nu}}_{p \rightarrow p} \leq (2 \frac{\mu + \nu + 2k - 1}{\mu - 1})^{\frac{1}{p}}.$
\end{proposition}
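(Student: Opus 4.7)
The plan is to handle the assertions in turn, using the factorization $\mathcal{T}_{\mu,k}^{\nu}(A) = P_k (A \otimes I_\nu) P_k^*$ together with the identity $P_k P_k^* = I_{\mu+\nu+2k}$ already noted in the proof of Proposition \ref{constantcmunuk}. Unitality is then immediate: $\mathcal{T}_{\mu,k}^{\nu}(I_\mu) = P_k(I_\mu \otimes I_\nu) P_k^* = P_k P_k^* = I_{\mu+\nu+2k}$. The operator-norm bound $\nm{\mathcal{T}_{\mu,k}^{\nu}}_{\infty \to \infty} \leq 1$ follows from $\nm{P_k} \leq 1$, $\nm{P_k^*} = 1$, and $\nm{A \otimes I_\nu} = \nm{A}$; unitality forces the reverse inequality, so the norm is exactly $1$.

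For weak-$*$ continuity, the strategy is to exhibit $\mathcal{T}_{\mu,k}^{\nu}$ as the Banach-space adjoint of a bounded map between the preduals. Pairing with $B \in S^1(\Hc_{\mu+\nu+2k})$ and cycling under the trace gives
\[ \Tr(B\, \mathcal{T}_{\mu,k}^{\nu}(A)) = \Tr\bigl(\mathrm{Tr}_{\Hc_\nu}(P_k^* B P_k)\, A\bigr), \]
where $\mathrm{Tr}_{\Hc_\nu}$ is the partial trace over $\Hc_\nu$. Since $B \mapsto P_k^* B P_k$ is bounded $S^1(\Hc_{\mu+\nu+2k}) \to S^1(\Hc_\mu \otimes \Hc_\nu)$ and the partial trace is bounded on $S^1$, the composition is a bounded preadjoint, so $\mathcal{T}_{\mu,k}^{\nu}$ is weak-$*$ continuous. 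For the compact-to-compact claim, I would use Proposition \ref{Tquantumchannel}: every finite-rank operator on $\Hc_\mu$ is trace-class, hence its image is trace-class and therefore compact. A general compact $A$ is an operator-norm limit of finite-rank operators, and since $\mathcal{T}_{\mu,k}^{\nu}$ is contractive in operator norm, $\mathcal{T}_{\mu,k}^{\nu}(A)$ is an operator-norm limit of compacts and thus compact.

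The Schatten-$p$ bound will be obtained by interpolating between $p = \infty$ (handled in the first paragraph) and $p = 1$. For positive trace-class $A$, Proposition \ref{Tquantumchannel} gives $\nm{\mathcal{T}_{\mu,k}^{\nu}(A)}_1 = \frac{\mu+\nu+2k-1}{\mu-1} \nm{A}_1$; decomposing a general $A \in S^1$ as $A_1 + i A_2$ with $A_j$ self-adjoint and each $A_j = A_j^+ - A_j^-$ with $A_j^{\pm} \geq 0$, the triangle inequality yields $\nm{\mathcal{T}_{\mu,k}^{\nu}}_{1 \to 1} \leq 2 \frac{\mu+\nu+2k-1}{\mu-1}$. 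Applying the same Schatten interpolation theorems from \cite{zhu} that were invoked in Proposition \ref{factsRmu} then gives
\[ \nm{\mathcal{T}_{\mu,k}^{\nu}}_{p \to p} \leq \nm{\mathcal{T}_{\mu,k}^{\nu}}_{1 \to 1}^{1/p} \nm{\mathcal{T}_{\mu,k}^{\nu}}_{\infty \to \infty}^{1-1/p} \leq \left(2\frac{\mu+\nu+2k-1}{\mu-1}\right)^{1/p}. \]

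I do not anticipate any serious obstacle. The one mildly subtle point is the compact-to-compact statement: since $A \otimes I_\nu$ is almost never compact when $\Hc_\nu$ is infinite-dimensional, one cannot argue fiberwise, and the compactness of the output must be extracted indirectly from the $S^1$-boundedness of Proposition \ref{Tquantumchannel} on finite-rank inputs together with closure of the compacts under operator-norm limits.
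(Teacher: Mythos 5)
Your proposal is correct and follows essentially the same route as the paper: unitality and the operator-norm bound via $P_kP_k^*=I$ and contractivity, weak-$*$ continuity by exhibiting a bounded preadjoint through $\Tr(B\,\mathcal{T}_{\mu,k}^{\nu}(A))=\Tr\bigl((A\otimes I_\nu)P_k^*BP_k\bigr)$, compactness via norm limits of finite-rank (hence trace-class) operators, the $S^1$ bound by splitting into self-adjoint and then positive parts, and Schatten interpolation for general $p$. Your explicit mention of the partial trace and of why a fiberwise compactness argument fails are minor elaborations of the same argument, not a different approach.
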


\begin{proof}
$\mathcal{T}_{\mu,k}^{\nu}$ is obviously unital. We see it is weak-$*$ continuous realizing $\rho \in S^1(\Hc_{\mu + \nu + 2k})$ implies
$$ \Tr(\mathcal{T}_{\mu,k}^{\nu}(A) \rho) = \Tr(A \otimes I_{\nu} (P_k^* \rho P_k))$$
together with the fact that
$$A \mapsto A \otimes I_{\nu}$$
is weak-$*$ continuous. We also see
$$\nm{\mathcal{T}_{\mu,k}^{\nu}(A)} = \nm{P_k (A \otimes I) P_k^*} \leq \nm{P_k} \cdot \nm{A \otimes I} \cdot \nm{P_k^*} \leq \nm{A}.$$
$\nm{\mathcal{T}_{\mu,k}^{\nu}} = 1$ follows by letting $A = I_{\mu}$.

Now let $A \in B(\Hc_{\mu + \nu + 2k})$ be a compact operator and $\{A_n \}_n$ a sequence of finite rank operators converging to $A$. Then each $A_n$ is necessarily trace-class and by Proposition \ref{Tquantumchannel} we see that $\mathcal{T}_{\mu,k}^{\nu}(A_n)$ is trace-class. This implies $\mathcal{T}_{\mu,k}^{\nu}(A_n)$ is compact and that $\mathcal{T}_{\mu,k}^{\nu}(A)$ is compact as the limit of a sequence of compact operators.

We now prove a bound on $\nm{\mathcal{T}_{\mu,k}^{\nu}}_{1 \rightarrow 1}$. Writing $A \in S^1(\Hc_{\mu})$ as $A = A_1 + i A_2$, where both $A_1$ and $A_2$ are self-adjoint, we get

$$ \nm{A}_{1} \leq \nm{A_1}_{1} + \nm{A_2}_{1},$$
and
$$ \nm{A_1}_{1} = \nm{\frac{A + A^*}{2}}_{1} \leq \nm{A}_{1},$$
and similarly for $A_2$. Hence we see, using the decomposition of $A_1$ and $A_2$ in positive and negative parts
\begin{flalign*}
& \nm{\mathcal{T}_{\mu,k}^{\nu}(A)}_{1} \leq \nm{\mathcal{T}_{\mu,k}^{\nu}(A_1)}_{1} + \nm{\mathcal{T}_{\mu,k}^{\nu}(A_2)}_{1}
\\ & = \frac{\mu + \nu + 2k -1}{\mu - 1} ( \nm{A_1}_{1} + \nm{A_2}_{1})
\\ & \leq 2 \frac{\mu + \nu + 2k -1}{\mu - 1} \nm{A}_{1}.
\end{flalign*}
The bound for $\nm{\mathcal{T}_{\mu,k}^{\nu}}_{p \rightarrow p}$ follows by interpolation using Theorem 2.2.4, Remark 2.2.5 and Theorem 2.2.7 in \cite{zhu}.
\end{proof}

We now want to calculate the kernel of $\mathcal{T}_{\mu,k}^{\nu}(A)$, as we eventually want to find the functional calculus of $\mathcal{T}_{\mu,k}^{\nu}(A)$ for any $A \in S^2(\Hc_{\mu})$.

\begin{lemma}
\label{gammaform}
Let $A(z,w)$ be the integral kernel of an operator $A \in B(\Hc_{\mu})$. Then
\begin{flalign*}
& \mathcal{T}_{\mu,k}^{\nu}(A)(x,y) = C_{\mu, \nu, k}^2 \int_{\D^3} A(z,u) (1 - u \li{y})^{- \mu - k} (1 - w \li{y})^{- \nu - k}
\\ & (u - w)^k (\li{z} - \li{w})^k (1 - x \li{z})^{-\mu - k} (1 - x \li{w})^{-\nu - k} d \iota_{\mu}(z) d \iota_{\nu}(w) d \iota_{\mu}(u).
\end{flalign*}
\end{lemma}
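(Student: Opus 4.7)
The plan is to compute the integral kernel of $\mathcal{T}_{\mu,k}^{\nu}(A) = P_k(A \otimes I_\nu) P_k^*$ by evaluating the composition on the reproducing kernel at $y$: for an operator $T$ on $\Hc_{\mu+\nu+2k}$ with kernel $T(x,y) = (TK_y^{\mu+\nu+2k})(x)$ (the convention fixed in the proof of Proposition \ref{factsRmu}), I only need to apply $P_k^*$, then $A \otimes I_\nu$, then $P_k$ to $K_y^{\mu+\nu+2k}$ in order, and read off the result at $x$.

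For the first step I would plug $f(\zeta) = K_y^{\mu+\nu+2k}(\zeta) = (1-\zeta\li{y})^{-(\mu+\nu+2k)}$ into the integral formula for $P_k^*$. The $(z-w)^k$ factor pulls out of the $\zeta$-integral, leaving
$$\int_\D (1-z\li{\zeta})^{-\mu-k}(1-w\li{\zeta})^{-\nu-k}(1-\zeta\li{y})^{-(\mu+\nu+2k)} d\iota_{\mu+\nu+2k}(\zeta),$$
which collapses to $(1-z\li{y})^{-\mu-k}(1-w\li{y})^{-\nu-k}$ by Taylor-expanding in $\zeta$ and $\li{\zeta}$ and using the orthogonality $\int \zeta^l\li{\zeta}^m\, d\iota_{\mu+\nu+2k}(\zeta) = \delta_{l,m}\,m!/(\mu+\nu+2k)_m$; the resulting Pochhammer sum telescopes back to a product of two factors. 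Hence $P_k^*K_y^{\mu+\nu+2k}(z,w) = C_{\mu,\nu,k}(z-w)^k(1-z\li{y})^{-\mu-k}(1-w\li{y})^{-\nu-k}$.

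Next, since $A$ acts on the first tensor factor, using the integral representation $(Ah)(z) = \int_\D h(u) A(z,u)\, d\iota_\mu(u)$ (a direct consequence of $A(x,y) = (AK_y^\mu)(x)$ together with the reproducing formula) applied to $h(u) = (u-w)^k(1-u\li{y})^{-\mu-k}$ gives
$$(A \otimes I_\nu)P_k^*K_y^{\mu+\nu+2k}(z,w) = C_{\mu,\nu,k}(1-w\li{y})^{-\nu-k}\int_\D A(z,u)(u-w)^k(1-u\li{y})^{-\mu-k}\, d\iota_\mu(u).$$
Finally I apply $P_k$ in its integral form from the earlier proposition, using the identity $\frac{\li{z}}{1-x\li{z}} - \frac{\li{w}}{1-x\li{w}} = \frac{\li{z}-\li{w}}{(1-x\li{z})(1-x\li{w})}$ to collapse its kernel to $(\li{z}-\li{w})^k(1-x\li{z})^{-\mu-k}(1-x\li{w})^{-\nu-k}$; substituting the expression above and evaluating at $x$ produces exactly the triple integral in the lemma statement.

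The main obstacle is bookkeeping rather than anything conceptual: four disc variables (the external points $x, y$ and the integration variables $z, w, u$) appear with different weights $\mu$ or $\nu$, and one must carefully track the two antisymmetric factors $(u-w)^k$ and $(\li{z}-\li{w})^k$ — coming respectively from $P_k^*K_y^{\mu+\nu+2k}$ and from $P_k$ — together with the placement of complex conjugates, so that holomorphic and antiholomorphic dependences on each disc variable match throughout.
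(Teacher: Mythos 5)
Your proposal is correct and follows essentially the same route as the paper: both arguments read off the kernel by applying $P_k(A\otimes I_\nu)P_k^*$ to the reproducing kernel $K_y^{\mu+\nu+2k}$, use the integral formulas for $P_k$ and $P_k^*$, and let the reproducing property collapse the $\zeta$-integral; the only difference is that you work inside-out (computing $P_k^*K_y$ first) while the paper unwinds $P_k$ first and performs the $\zeta$-integration at the end. The intermediate identities you state, including $P_k^*K_y^{\mu+\nu+2k}(z,w)=C_{\mu,\nu,k}(z-w)^k(1-z\li{y})^{-\mu-k}(1-w\li{y})^{-\nu-k}$ and the collapsed form of the $P_k$-kernel, are exactly what the paper's computation produces.
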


\begin{proof}
This is a direct calculation.

\begin{flalign*}
& \mathcal{T}_{\mu,k}^{\nu}(A)(x,y) = \mathcal{T}_{\mu,k}^{\nu}(A)(K_y)(x) = P_k (A \otimes I) P_k^*(K_y)(x)
\\ & = C_{\mu, \nu, k} \int_{\D^2} (A \otimes I) P_k^*(K_y) (z,w) (\li{z} - \li{w})^k (1 - x \li{z})^{-\mu - k} 
\\ & (1 - x \li{w})^{-\nu - k} d \iota_{\mu}(z) d \iota_{\nu}(w)
\\ & = C_{\mu, \nu, k} \int_{\D^3} A(z,u) P_k^*(K_y) (u,w) (\li{z} - \li{w})^k (1 - x \li{z})^{-\mu - k}
\\ & (1 - x \li{w})^{-\nu - k} d \iota_{\mu}(z) d \iota_{\nu}(w) d \iota_{\mu}(u)
\\ & = C_{\mu, \nu, k}^2 \int_{\D^4} A(z,u) K_y(\zeta) (1 - u \li{\zeta})^{- \mu - k} (1 - w \li{\zeta})^{- \nu - k} (u - w)^k 
\\ & (\li{z} - \li{w})^k (1 - x \li{z})^{-\mu - k} (1 - x \li{w})^{-\nu - k} d \iota_{\mu + \nu + 2k}(\zeta) d \iota_{\mu}(z) d \iota_{\nu}(w) d \iota_{\mu}(u)
\\ & = C_{\mu, \nu, k}^2 \int_{\D^3} A(z,u) (1 - u \li{y})^{- \mu - k} (1 - w \li{y})^{- \nu - k} (u - w)^k 
\\ & (\li{z} - \li{w})^k (1 - x \li{z})^{-\mu - k} (1 - x \li{w})^{-\nu - k} d \iota_{\mu}(z) d \iota_{\nu}(w) d \iota_{\mu}(u).
\end{flalign*}
\end{proof}

We compute the covariant symbol of $\mathcal{T}_{\mu,k}^{\nu}$ on the Toeplitz operator $R_{\mu}^*(f)$.

\begin{proposition}
\label{channelssumberezin}
We have
$$ R_{\mu + \nu + 2k} \mathcal{T}_{\mu,k}^{\nu} R_{\mu}^* (f) = C_{\mu, \nu, k}^2 \sum_{j = 0}^k (-1)^j \binom{k}{j} \frac{(\nu + k - j)_k}{(\nu)_k} B_{\mu + j}(f),$$
where $B_{\mu+j}$ is the Berezin transform of Definition \ref{berezindef}.
\end{proposition}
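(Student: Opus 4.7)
The plan is to use $SU(1,1)$-equivariance to reduce the claimed identity of functions on $\D$ to a numerical equality at the origin, and then to compute both sides explicitly from the kernel formulas.

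Each of $R_{\mu}^*$, $\mathcal{T}_{\mu,k}^{\nu}$, $R_{\mu+\nu+2k}$, and the Berezin transforms $B_{\mu+l}$ intertwines the natural $SU(1,1)$-actions, so both sides of the identity define $SU(1,1)$-equivariant maps $L^2(\D, d\iota) \to C(\D)$. Since $SU(1,1)$ acts transitively on $\D$ (Remark \ref{repsu11facts}), two such maps coincide as soon as they agree at $z=0$. So it suffices to prove
$$\mathcal{T}_{\mu,k}^{\nu}(R_{\mu}^*(f))(0,0) = C_{\mu,\nu,k}^2 \sum_{l=0}^k (-1)^l \binom{k}{l}\frac{(\nu+k-l)_k}{(\nu)_k} B_{\mu+l}(f)(0)$$
for every $f$.

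To evaluate the left-hand side I would substitute
$$R_{\mu}^*(f)(z,u) = \int_{\D}(1-z\li{\zeta})^{-\mu}(1-\zeta\li{u})^{-\mu}(1-|\zeta|^2)^{\mu} f(\zeta)\, d\iota(\zeta)$$
from Lemma \ref{Toeplitzrmu} into the kernel formula of Lemma \ref{gammaform} at $x=y=0$, which kills the four factors $(1-u\li{x})^{-\mu-k}$, $(1-w\li{x})^{-\nu-k}$, $(1-x\li{z})^{-\mu-k}$, $(1-x\li{w})^{-\nu-k}$. Expanding $(u-w)^k$ and $(\li{z}-\li{w})^k$ by the binomial theorem and using the reproducing-kernel identities
$$\int_{\D}(1-z\li{\zeta})^{-\mu}\li{z}^j\, d\iota_{\mu}(z)=\li{\zeta}^j, \qquad \int_{\D}(1-\zeta\li{u})^{-\mu}u^i\, d\iota_{\mu}(u)=\zeta^i,$$
together with the orthogonality $\int_{\D} w^{k-i}\li{w}^{k-j}\, d\iota_{\nu}(w) = \delta_{ij}(k-i)!/(\nu)_{k-i}$, collapses the double binomial sum to a diagonal one and produces
$$\mathcal{T}_{\mu,k}^{\nu}(R_{\mu}^*(f))(0,0) = C_{\mu,\nu,k}^2 \sum_{j=0}^k \binom{k}{j}^2 \frac{(k-j)!}{(\nu)_{k-j}} \int_{\D}(1-|\zeta|^2)^{\mu}|\zeta|^{2j} f(\zeta)\, d\iota(\zeta).$$

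Next, expand $|\zeta|^{2j} = \sum_{l=0}^j \binom{j}{l}(-1)^l (1-|\zeta|^2)^l$ and recognize the resulting integrals as $B_{\mu+l}(f)(0) = \int_{\D}(1-|\zeta|^2)^{\mu+l}f(\zeta)\, d\iota(\zeta)$. Matching coefficients against the claimed formula reduces the proposition to the combinatorial identity
$$\sum_{j=l}^k \binom{k}{j}^2\binom{j}{l}\frac{(k-j)!}{(\nu)_{k-j}} = \binom{k}{l}\frac{(\nu+k-l)_k}{(\nu)_k}.$$
Using $\binom{k}{j}\binom{j}{l} = \binom{k}{l}\binom{k-l}{j-l}$, re-indexing $n=k-j$, and converting the binomials to shifted factorials, the left-hand side factors as $\binom{k}{l}\cdot {}_2F_1(-k, l-k; \nu; 1)$; Lemma \ref{Gausshgf} with $n=k$, $b=l-k$, $c=\nu$ then evaluates this to $(\nu+k-l)_k/(\nu)_k$, finishing the proof. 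The main obstacle will be the bookkeeping of the quadruple integral and the signs in the double binomial expansion; the essential algebraic input is Gauss's summation, already packaged as Lemma \ref{Gausshgf}.
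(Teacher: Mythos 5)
Your proposal follows essentially the same route as the paper's proof: reduction to the origin by $SU(1,1)$-equivariance, substitution of the Toeplitz kernel into the kernel formula of Lemma \ref{gammaform}, collapse of the double binomial sum via orthogonality of monomials in $\Hc_{\nu}$, expansion of $|\zeta|^{2j}$ in powers of $1-|\zeta|^2$ to produce the $B_{\mu+l}(f)(0)$, and evaluation of the resulting coefficient by Gauss's summation (Lemma \ref{Gausshgf}). The intermediate formulas and the final combinatorial identity agree with the paper's (up to the re-indexing $i \leftrightarrow k-j$), so the plan is correct as stated.
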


\begin{proof}
Note that by $SU(1,1)$-invariance we only need to prove that

\begin{flalign*}
& R_{\mu + \nu + 2k} \mathcal{T}_{\mu,k}^{\nu} R_{\mu}^* (f) (0)
\\ & = C_{\mu, \nu, k}^2 \sum_{j = 0}^k (-1)^j \binom{k}{j} \frac{(\nu + k - j)_k}{(\nu)_k} B_{\mu + j}(f)(0).
\end{flalign*}

First we calculate $\mathcal{T}_{\mu,k}^{\nu}(R_{\mu}(f))(x,y)$. By Lemma \ref{gammaform}

\begin{flalign*}
& \mathcal{T}_{\mu,k}^{\nu}(R_{\mu}^*(f))(x,y) = C_{\mu, \nu, k}^2 \int_{\D^3} R_{\mu}^*(f)(z,u) (1 - u \li{y})^{- \mu - k}
\\ & (1 - w \li{y})^{- \nu - k} (u - w)^k (\li{z} - \li{w})^k (1 - x \li{z})^{-\mu - k} (1 - x \li{w})^{-\nu - k} d \iota_{\mu}(z) d \iota_{\nu}(w) d \iota_{\mu}(u)
\\ & = \frac{ C_{\mu, \nu, k}^2}{\mu - 1} \int_{\D^4} K^{\mu}(z,v) f(v) K^{\mu}(v,u) (1 - u \li{y})^{- \mu - k} (1 - w \li{y})^{- \nu - k}
\\ & (u - w)^k (\li{z} - \li{w})^k (1 - x \li{z})^{-\mu - k} (1 - x \li{w})^{-\nu - k} d \iota_{\mu}(z) d \iota_{\nu}(w) d \iota_{\mu}(u) d \iota_{\mu}(v)
\\ & = \frac{C_{\mu, \nu, k}^2}{\mu - 1} \int_{\D^2} f(v) (1 - v \li{y})^{- \mu - k} (1 - w \li{y})^{- \nu - k} (v - w)^k 
\\ & (\li{v} - \li{w})^k (1 - x \li{v})^{-\mu - k} (1 - x \li{w})^{-\nu - k} d \iota_{\nu}(w) d \iota_{\mu}(v).
\end{flalign*}
It follows that
\begin{flalign*}
& R_{\mu + \nu + 2k} \mathcal{T}_{\mu,k}^{\nu} R_{\mu}^* (f) (0) = \frac{C_{\mu, \nu, k}^2}{\mu - 1} \int_{\D^2} f(v) |v - w|^{2k} d \iota_{\nu}(w) d \iota_{\mu}(v)
\\ & = \frac{C_{\mu, \nu, k}^2}{\mu - 1} \sum_{i=0}^k \binom{k}{i}^2 \frac{i!}{(\nu)_i} \int_{\D} f(v) |v|^{2(k-i)} d \iota_{\mu}(v)
\\ & = C_{\mu, \nu, k}^2 \sum_{i=0}^k \binom{k}{i}^2 \frac{i!}{(\nu)_i} \int_{\D} f(v) |v|^{2 (k-i)}(1 - |v|^2)^{\mu} d \iota(v).
\end{flalign*}
We expand this as

\begin{flalign*}
& R_{\mu + \nu + 2k} \mathcal{T}_{\mu,k}^{\nu} R_{\mu}^* (f) (0)
\\ & = C_{\mu, \nu, k}^2 \sum_{i=0}^k \binom{k}{i}^2 \frac{i!}{(\nu)_i} \int_{\D} f(v) |v|^{2(k-i)} (1 - |v|^2)^{\mu} d \iota(v)
\\ & = C_{\mu, \nu, k}^2 \sum_{i=0}^k \binom{k}{i}^2 \frac{i!}{(\nu)_i} \int_{\D} f(v) (1 - (1 - |v|^2))^{k-i} (1 - |v|^2)^{\mu} d \iota(v)
\\ & = C_{\mu, \nu, k}^2 \sum_{i=0}^k \sum_{j = 0}^{k-i} (-1)^j \binom{k}{i}^2 \frac{i!}{(\nu)_i} \binom{k-i}{j} \int_{\D} f(v) (1 - |v|^2)^{\mu + j} d \iota(v)
\\ & = C_{\mu, \nu, k}^2 \sum_{i=0}^k \sum_{j = 0}^{k-i} (-1)^j \binom{k}{i}^2 \frac{i!}{(\nu)_i} \binom{k-i}{j} B_{\mu + j}(f)(0)
\\ & = C_{\mu, \nu, k}^2 \sum_{j = 0}^k (-1)^j ( \sum_{i=0}^{k-j} \binom{k}{i}^2 \frac{i!}{(\nu)_i} \binom{k-i}{j} ) B_{\mu + j}(f)(0).
\end{flalign*}
Using Gauss's summation theorem we evaluate the coefficient of $B_{\mu+j}(f)(0)$ as

\begin{flalign*}
& \sum_{i=0}^{k-j} \binom{k}{i}^2 \frac{i!}{(\nu)_i} \binom{k-i}{j} = \frac{(-1)^j}{j!} \sum_{i=j}^{k-j} \frac{((- k)_i)^2 (- k + i)_j}{(\nu)_i i!}
\\ & = \frac{(-1)^j}{j!} \sum_{i=0}^{k-j} \frac{( -k)_{i + j} (-k)_i}{(\nu)_i i!} = \frac{(-k)_j (-1)^j}{j!} \sum_{i=0}^{k-j} \frac{ (-k)_i (-k + j)_i}{(\nu)_{i} i!}
\\ & = \binom{k}{j} {}_2F_1(-k,-k + j; \nu, 1) = \binom{k}{j} \frac{(\nu + k - j)_k}{(\nu)_k}.
\end{flalign*}
Hence

\begin{flalign*}
& R_{\mu + \nu + 2k} \mathcal{T}_{\mu,k}^{\nu} R_{\mu}^* (f) (0)
\\ & = C_{\mu, \nu, k}^2 \sum_{j = 0}^k (-1)^i ( \sum_{i=0}^{k-j} \binom{k}{i}^2 \frac{i!}{(\nu)_i} \binom{k-i}{j} ) B_{\mu + j}(f)(0)
\\ & = C_{\mu, \nu, k}^2 \sum_{j = 0}^k (-1)^j \binom{k}{j} \frac{(\nu + k - j)_k}{(\nu)_k} B_{\mu + j}(f)(0).
\end{flalign*}
This completes the proof.
\end{proof}

We collect some information.

\begin{proposition}
\label{Elimit}
We have
$$ R_{\mu + \nu + 2k} \mathcal{T}_{\mu,k}^{\nu} R_{\mu}^* = \frac{(\mu)_k (\nu)_k}{k! (\mu + \nu + k + 1)_k} \sum_{j = 0}^k (-1)^j \binom{k}{j} \frac{(\nu + k - j)_k}{(\nu)_k} B_{\mu + j}$$
and
$$ \lim_{\nu \rightarrow \infty} \nm{R_{\mu + \nu + 2k} \mathcal{T}_{\mu,k}^{\nu} R_{\mu}^* - \frac{ (\mu)_{k}}{k!} \sum_{j = 0}^k (-1)^j \binom{k}{j} B_{\mu + j}}_p = 0$$
for any $1 \leq p \leq \infty$.
\end{proposition}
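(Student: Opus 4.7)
The first identity is essentially immediate: substituting the explicit value $C_{\mu,\nu,k}^2 = \frac{(\mu)_k (\nu)_k}{k!(\mu+\nu+k+1)_k}$ from Proposition \ref{constantcmunuk} into the formula of Proposition \ref{channelssumberezin} yields exactly the stated expression. So the content of the Proposition really lies in the second, asymptotic, statement.

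For the limit, I would rewrite the operator-valued sum as
$$ R_{\mu+\nu+2k}\,\mathcal{T}_{\mu,k}^{\nu}\,R_{\mu}^* \;=\; \sum_{j=0}^{k}(-1)^j\binom{k}{j}\, c_j(\nu)\, B_{\mu+j}, \qquad c_j(\nu):=\frac{(\mu)_k\,(\nu+k-j)_k}{k!\,(\mu+\nu+k+1)_k}, $$
and observe that the Berezin transforms $B_{\mu+j}$ do not depend on $\nu$. The crucial point is therefore purely scalar: each Pochhammer symbol $(\nu+k-j)_k$ and $(\mu+\nu+k+1)_k$ is a polynomial in $\nu$ of degree $k$ with leading coefficient $1$, so
$$ \lim_{\nu\to\infty} c_j(\nu) \;=\; \frac{(\mu)_k}{k!}, $$
uniformly in $j\in\{0,\dots,k\}$ since the index runs over a finite set.

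Combining this with the triangle inequality gives
$$ \Bigl\|R_{\mu+\nu+2k}\,\mathcal{T}_{\mu,k}^{\nu}\,R_{\mu}^* - \tfrac{(\mu)_k}{k!}\sum_{j=0}^k(-1)^j\binom{k}{j}B_{\mu+j}\Bigr\|_p \;\leq\; \sum_{j=0}^{k}\binom{k}{j}\,\bigl|c_j(\nu)-\tfrac{(\mu)_k}{k!}\bigr|\,\|B_{\mu+j}\|_{p\to p}. $$
Each operator norm $\|B_{\mu+j}\|_{p\to p}$ is finite by the discussion following Definition \ref{berezindef} (where $B_{\nu}$ was shown to be bounded on $L^p(\D,d\iota)$ for all $1\leq p\leq\infty$ via interpolation), and the scalar prefactors tend to zero. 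Since the sum has only $k+1$ terms, independent of $\nu$, the right-hand side tends to zero as $\nu\to\infty$, giving the claimed norm convergence.

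There is no serious obstacle here: the proof is a direct bookkeeping exercise once Proposition \ref{channelssumberezin} and Proposition \ref{constantcmunuk} are in hand. The only point that requires a moment's care is ensuring the asymptotics of the quotient of Pochhammer symbols, which is handled by comparing leading terms in $\nu$; everything else is finite-sum linearity together with the already-established boundedness of the Berezin transforms.
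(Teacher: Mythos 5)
Your proposal is correct and follows essentially the same route as the paper: substitute the value of $C_{\mu,\nu,k}^2$ from Proposition \ref{constantcmunuk} into Proposition \ref{channelssumberezin} for the identity, then observe that the finitely many scalar coefficients converge to $\frac{(\mu)_k}{k!}(-1)^j\binom{k}{j}$ as $\nu \to \infty$ while the operators $B_{\mu+j}$ are fixed and bounded on $L^p$. The only difference is that you spell out the triangle-inequality step and the $\nm{B_{\mu+j}}_{p\to p}$ bounds that the paper leaves implicit in ``the second part follows,'' which is a welcome clarification rather than a new idea.
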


\begin{proof}
The first line follows directly from Proposition \ref{channelssumberezin} and Proposition \ref{constantcmunuk} saying

$$C^{-2}_{\mu, \nu, k} =  \frac{k! (\mu + \nu + k + 1)_k}{(\mu)_k (\nu)_k}.$$
This implies that
$$ \lim_{\nu \rightarrow \infty} C^{-2}_{\mu, \nu, k} = \frac{k!}{(\mu)_k}.$$
We also observe that
$$ \lim_{\nu \rightarrow \infty} (-1)^j \binom{k}{j} \frac{(\nu + k - j)_k}{(\nu)_k} = (-1)^j \binom{k}{j}.$$
The second part follows.
\end{proof}

We define the following notation for convenience.

\begin{definition}
\label{noncptdefEmunuk}
$$E_{\mu,k}^{\nu} \coloneqq R_{\mu + \nu + 2k} \mathcal{T}_{\mu,k}^{\nu} R_{\mu}^*,$$
and
$$ E_{\mu,k} \coloneqq \frac{(\mu)_{k}}{k!} \sum_{j=0}^k (-1)^j \binom{k}{j} B_{\mu + j}.$$
\end{definition}

Note that by Proposition \ref{Elimit}
$$\lim_{\nu \rightarrow \infty} \nm{E_{\mu,k}^{\nu} - E_{\mu,k}}_{p} = 0$$
for any $1 \leq p \leq \infty$. We discuss another way of expressing our limit, which was done in \cite{soletal} for the group $SU(2)$ and its associated $SU(2)$-equivariant quantum channels. In that paper Aschieri, Ruba and Solovej defined a generalized Husimi function. We define such a function for $SU(1,1)$.

\begin{definition}
\label{genHus}
For $A \in B(\Hc_{\nu})$ we can define a generalized Husimi function
$H_{\nu}^i(A) \colon \D \rightarrow \C$
by
$$H_{\nu}^i(A)(g \cdot 0) \coloneqq \frac{(\nu)_i}{i!} \langle A g \cdot z^i, g \cdot z^i \rangle_{\nu}.$$
\end{definition}

Note that his is well-defined and $SU(1,1)$-invariant, i.e.
$$ g \cdot H_{\nu}^i(A) = H_{\nu}^i(gAg^{-1})$$
for $g \in SU(1,1)$. Note also that the $(\frac{(\nu)_i}{i!})^{\frac{1}{2}} z^i$ are eigenvectors for the Lie algebra element $H = \begin{pmatrix} 1 & 0 \\ 0 & -1 \end{pmatrix} \in \mathfrak{su}(1,1)$ with eigenvalues $\nu + 2i$. We prove a lemma.

\begin{lemma}
\label{Huscont}
The maps
$$H_{\nu}^i: B(\Hc_{\nu}) \rightarrow L^{\infty}(\D, d \iota)$$
and
$$H_{\nu}^i: S^p(\Hc_{\nu}) \rightarrow L^p(\D, d \iota)$$
are well-defined and continuous.
\end{lemma}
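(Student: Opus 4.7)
My plan is to mirror the proof of Proposition \ref{factsRmu}: I would first establish the two endpoint bounds at $p = \infty$ and $p = 1$ by direct computation, and then obtain all intermediate $1 < p < \infty$ by complex interpolation. Before either estimate, I would verify that the definition makes sense, i.e. that $\langle A\,g\cdot z^i, g\cdot z^i\rangle_{\nu}$ depends only on $g\cdot 0 \in \D$ and not on $g$ itself. This is exactly right-$U(1)$-invariance, which holds because $z^i$ is a weight vector for the maximal compact subgroup (as noted after Definition \ref{genHus}): a diagonal $k \in U(1)$ multiplies $z^i$ by a unit scalar, and the two scalars cancel inside the sesquilinear form.

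The $B(\Hc_{\nu}) \to L^{\infty}$ bound is immediate from Cauchy--Schwarz combined with unitarity of the representation: since $\nm{g\cdot z^i}_{\nu}^2 = \nm{z^i}_{\nu}^2 = i!/(\nu)_i$, we have
$$|H_{\nu}^i(A)(g\cdot 0)| \leq \frac{(\nu)_i}{i!}\,\nm{A}\,\nm{g\cdot z^i}_{\nu}^2 = \nm{A},$$
settling $p = \infty$. For $p = 1$ I would start with a positive rank-one $A = x \otimes x^*$, so that $H_{\nu}^i(A) \geq 0$. Using the Haar normalization $\int_{SU(1,1)} f(g)\,dg = \int_{\D}\int_{U(1)} f(zk)\,dk\,d\iota(z)$ together with the right-$U(1)$-invariance of the integrand, and the Schur orthogonality relation proved just before Lemma \ref{traceclassint}, I compute
$$\int_{\D} H_{\nu}^i(A)(z)\,d\iota(z) = \frac{(\nu)_i}{i!}\int_{SU(1,1)} |\langle g\cdot z^i, x\rangle_{\nu}|^2\,dg = \frac{(\nu)_i}{i!}\cdot\frac{\nm{z^i}_{\nu}^2\,\nm{x}_{\nu}^2}{\nu - 1} = \frac{\Tr(A)}{\nu - 1}.$$
Spectral decomposition together with monotone convergence extends the identity $\int_{\D} H_{\nu}^i(A)\,d\iota = \Tr(A)/(\nu - 1)$ to any positive $A \in S^1(\Hc_{\nu})$, and the standard splitting $A = (A_1^+ - A_1^-) + i(A_2^+ - A_2^-)$ together with linearity of $H_{\nu}^i$ then upgrades it to the inequality $\nm{H_{\nu}^i(A)}_1 \leq 2\nm{A}_1/(\nu - 1)$ for general trace-class $A$, exactly as in Proposition \ref{factsRmu}.

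With both endpoints in hand, the bounds $S^p(\Hc_{\nu}) \to L^p(\D, d\iota)$ for $1 < p < \infty$ follow by complex interpolation, applying Theorems 2.2.4, 2.2.6 and 2.2.7 of \cite{zhu} exactly as at the end of the proof of Proposition \ref{factsRmu}. I do not expect any serious obstacle: the only genuine computation is the $L^1$ identity, and it is essentially a restatement of Schur orthogonality. The only point that deserves a moment's care is the right-$U(1)$-invariance, but this is immediate from the weight-vector property of $z^i$ and is what makes the definition of $H_{\nu}^i$ sensible in the first place.
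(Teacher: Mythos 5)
Your proof is correct and follows essentially the same route as the paper's: the trivial $L^\infty$ bound via unitarity, the $L^1$ identity for positive trace-class operators via Schur orthogonality (the paper invokes Lemma \ref{traceclassint}, which is proved by exactly the rank-one computation you carry out), decomposition into positive parts, and interpolation at the end. Incidentally, your constant $\Tr(A)/(\nu-1)$ is the correct one --- the paper's displayed identity $\int_{\D} H_{\nu}^i(A)\,d\iota = \Tr(A)$ in this proof drops the factor $\frac{1}{\nu-1}$ (compare Lemma \ref{traceclassint} and the later use of $\int_{\D} H_{\mu}^k(A)\,d\iota = \Tr(A)/(\mu-1)$ in the proofs of Theorem \ref{funccalcxn} and the final theorem), though the discrepancy is harmless for the continuity statement being proved.
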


\begin{proof}
Note that for $A \in B(\Hc_{\nu})$ and $g \in SU(1,1)$
$$|H_{\nu}^i(A)(g \cdot 0)| \leq \nm{A},$$
which gives the first part. We now concentrate on the case $p=1$. Let $A \in S^1(\Hc_{\nu})$ and $A \geq 0$. Then $H_{\nu}^i(A)(g \cdot 0) \geq 0$ for every $g$ and
$$ \int_{\D} H_{\nu}^i(A)(z) d \iota(z) = \int_{SU(1,1)} H_{\nu}^i(A)(g \cdot 0) d \iota(z) = \Tr(A)$$
by Lemma \ref{traceclassint}, so $H_{\nu}^i(A) \in L^1(\D, d \iota)$. The claim then follows by writing each $A \in S^1(\Hc_{\nu})$ as a sum of positive operators. The claim for general $p$ follows by interpolation using Theorem 2.2.4, Remark 2.2.5, Theorem 2.2.6 and Theorem 2.2.7 in \cite{zhu}.
\end{proof}

We discuss another way of expressing $E_{\mu,k}(f)$.

\begin{proposition}
\label{Husimiremark1}
For any $f \in L^1(\D, d \iota)$
$$ E_{\mu,k}(f) = H_{\mu}^k(R_{\mu}^*(f)).$$
\end{proposition}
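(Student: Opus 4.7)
My plan is to exploit $SU(1,1)$-equivariance to reduce the identity to a pointwise check at the origin $z=0$. Both the map $f \mapsto E_{\mu,k}(f)$ (being a linear combination of the $SU(1,1)$-invariant operators $B_{\mu+j}$) and the map $f \mapsto H_\mu^k(R_\mu^*(f))$ (since $R_\mu^*$ and $H_\mu^k$ are both $SU(1,1)$-equivariant, the latter essentially by the definition of $H_\mu^k$ in terms of the orbit $g\cdot z^k$) intertwine the action of $SU(1,1)$ on functions on $\D$. Therefore it suffices to prove $E_{\mu,k}(f)(0) = H_\mu^k(R_\mu^*(f))(0)$.

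Next I evaluate the right-hand side at $0$. By definition $H_\mu^k(R_\mu^*(f))(0) = \frac{(\mu)_k}{k!}\langle R_\mu^*(f) z^k, z^k\rangle_\mu$, and I plug in the kernel formula from Lemma \ref{Toeplitzrmu}:
$$R_\mu^*(f)(x,y) = \int_\D \frac{K^\mu(x,z) f(z) K^\mu(z,y)}{K^\mu(z,z)}\, d\iota(z).$$
Applying the reproducing property of $K^\mu$ against $z^k \in \Hc_\mu$ in each of the $x$ and $y$ integrals collapses the inner double integral to $|z|^{2k}$, leaving
$$H_\mu^k(R_\mu^*(f))(0) = \frac{(\mu)_k}{k!}\int_\D f(z)\, |z|^{2k}(1-|z|^2)^\mu\, d\iota(z).$$

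Finally I evaluate $E_{\mu,k}(f)(0)$ directly. From the formula for the Berezin transform computed in Section \ref{rnuber}, $B_{\mu+j}(f)(0) = \int_\D (1-|x|^2)^{\mu+j} f(x)\, d\iota(x)$, so
$$E_{\mu,k}(f)(0) = \frac{(\mu)_k}{k!}\int_\D f(x)(1-|x|^2)^\mu \sum_{j=0}^k (-1)^j \binom{k}{j}(1-|x|^2)^j\, d\iota(x).$$
The binomial identity $\sum_{j=0}^k (-1)^j\binom{k}{j}(1-|x|^2)^j = (1-(1-|x|^2))^k = |x|^{2k}$ matches this with the formula above. Combined with the initial equivariance reduction, this completes the proof. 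No step is really an obstacle; the only care needed is to verify that $H_\mu^k \circ R_\mu^*$ is indeed $SU(1,1)$-equivariant on $L^1(\D, d\iota)$, which follows from the equivariance of $R_\mu^*$ (Remark \ref{Rnustargeneral}) and the invariance built into the definition of $H_\mu^k$.
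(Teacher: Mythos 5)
Your proof is correct, and it takes a genuinely more direct route than the paper. The paper never computes $E_{\mu,k}(f)(0)$ from its closed-form definition; instead it recalls the $\nu$-dependent expression $E_{\mu,k}^{\nu}(f)(0) = C_{\mu,\nu,k}^2 \sum_{i=0}^k \binom{k}{i}^2 \frac{i!}{(\nu)_i} \int_{\D} f(v)\,|v|^{2(k-i)}(1-|v|^2)^{\mu}\, d\iota(v)$ from the proof of Proposition \ref{channelssumberezin}, rewrites it as a linear combination of the Husimi functions $H_{\mu}^{k-i}(R_{\mu}^*(f))(0)$, and then lets $\nu \rightarrow \infty$, where only the $i=0$ term survives because $\frac{i!}{(\nu)_i} \rightarrow 0$ for $i \geq 1$ and $C_{\mu,\nu,k}^2 \rightarrow \frac{(\mu)_k}{k!}$. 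You instead start from Definition \ref{noncptdefEmunuk}, evaluate each $B_{\mu+j}(f)(0) = \int_{\D}(1-|x|^2)^{\mu+j} f(x)\, d\iota(x)$, and collapse the alternating sum via $\sum_{j=0}^k (-1)^j \binom{k}{j}(1-|x|^2)^j = |x|^{2k}$; this is exactly the inverse of the binomial expansion performed in the proof of Proposition \ref{channelssumberezin}, so it is no surprise the two computations agree. Your computation of $H_{\mu}^k(R_{\mu}^*(f))(0)$ via the reproducing property is the same as the paper's, as is the reduction to the origin by equivariance. What your approach buys is the elimination of any limiting argument: the identity is verified purely algebraically from the definitions of $E_{\mu,k}$ and $H_{\mu}^k$, which is cleaner and makes the statement logically independent of Proposition \ref{Elimit}. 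What the paper's approach buys is that it exhibits $E_{\mu,k}^{\nu}$ itself (not just its limit) as an explicit positive combination of Husimi functions, which is mildly more information. Your closing remark about checking equivariance of $H_{\mu}^k \circ R_{\mu}^*$ on $L^1$ is the right point of care, and it is covered by the paper's observation that $g \cdot H_{\nu}^i(A) = H_{\nu}^i(gAg^{-1})$ together with the equivariance of $R_{\mu}^*$.
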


\begin{proof}
Note that in the proof of Proposition \ref{channelssumberezin}
$$ E_{\mu,k}^{\nu}(f)(0) = C_{\mu,\nu,k}^2 \sum_{i=0}^k \binom{k}{i}^2 \frac{i!}{(\nu)_i} \int_{\D} f(v) |v|^{2 (k-i)} (1 - |v|^2)^{\mu} d \iota(v).$$
Using Lemma \ref{Toeplitzrmu}
\begin{flalign*}
& H_{\mu}^i(R_{\mu}^*(f))( 0) = \frac{(\mu)_i}{i!} \langle R_{\mu}^*(f) z^i, z^i \rangle_{\mu}
\\ & = \frac{(\mu)_i}{i!} \int_{\D^2} K^{\mu}(x,y) f(y) K^{\mu}(y,z) (1 - |y|^2)^{\mu} \li{x}^i z^i d \iota_{\mu}(x) d \iota_{\mu}(z) d \iota(y)
\\ & = \frac{(\mu)_i}{i!} \int_{\D^2} f(y) |y|^{2 i} (1 - |y|^2)^{\mu} d \iota_{\mu}(x) d \iota(y).
\end{flalign*}
Hence
\begin{flalign*}
& E_{\mu,k}^{\nu}(f)(0) = C_{\mu,\nu,k}^2 \sum_{i=0}^k \binom{k}{i}^2 \frac{i! (k-i)!}{(\nu)_i (\mu)_{k-i}} H_{\mu}^{k-i}(R_{\mu}^*(f))(0),
\end{flalign*}
and thus
$$ E_{\mu,k}(f)(0) = \lim_{\nu \rightarrow \infty} E_{\mu,k}^{\nu} = \frac{(\mu)_k}{k!} \frac{k!}{(\mu)_{k}} H_{\mu}^k(R_{\mu}^*(f))(0) = H_{\mu}^k(R_{\mu}^*(f))(0),$$
where the limit can be taken in any $L^p$-norm. We conclude that
$$ E_{\mu,k}(f) = H_{\mu}^k(R_{\mu}^*(f)).$$
by $SU(1,1)$-invariance.
\end{proof}


Note that we want to describe the functional calculus of
\begin{flalign*}
& \mathcal{T}_{\mu,k}^{\nu}(R_{\mu}^*(f)) = R_{\mu + \nu + 2k}^{-1} E_{\mu ,k}^{\nu}(f) = R_{\mu + \nu + 2k}^* ( R_{\mu + \nu + 2k}^*)^{-1} R_{\mu + \nu + 2k}^{-1} E_{\mu,k}^{\nu}(f)
\\ & = R_{\mu + \nu + 2k}^* (B_{\mu + \nu + 2k})^{-1} E_{\mu,k}^{\nu}(f).
\end{flalign*}
Now this expression is only symbolic. Note that in Proposition \ref{domainok} we prove that for $f \in L^2(\D, d \iota)$ the function $E_{\mu,k}^{\nu}(f)$ lies in the domain of $B_{\mu + \nu + 2k}^{-1}$, and it follows that indeed

$$ \mathcal{T}_{\mu,k}^{\nu}(R_{\mu}^*(f)) = R_{\mu + \nu + 2k}^* (B_{\mu + \nu + 2k})^{-1} E_{\mu,k}^{\nu}(f) \in L^2(\D, d \iota), $$
a well-defined quantity. This is also equal to
$$T_{ ((\mu + \nu + 2k) B_{\mu + \nu + 2k})^{-1} E_{\mu,k}^{\nu}(f)}$$
where $T_f$ is the Toeplitz operator of $f$ of Lemma \ref{Toeplitzrmu}.
As $(\mu + \nu + 2k) B_{\mu + \nu + 2k}$ goes to the identity strongly, the natural guess would be that
$$ \lim_{\nu \rightarrow \infty} \frac{1}{\nu} \Tr(\mathcal{T}_{\mu,k}^{\nu}(R_{\mu}^*(f))^n) = \int_{\D} (E_{\mu,k}(f))^n d \iota(z).$$
We make this rigorous in the next section.

\section{Trace formulas}
\label{tracfor}
We start with an elementary lemma.

\begin{lemma}
\label{abskernbdd}
For any integer $n \geq 1$ the integral
$$ I_n(\nu) \coloneqq (\nu - 1)^n \int_{\D^n}  \left\lvert \frac{(1 - |z_1|^2)^{\nu} \dots (1 - |z_n|^2)^{\nu}}{(1 - z_1 \li{z_2})^{\nu} \dots (1 - z_{n-1} \li{z_n})^{\nu}} \right\lvert d \iota(z_1) \dots d \iota(z_n)$$
is bounded in $\nu$. More precisely, for $\nu \geq 4$ we have $I_n(\nu) \leq 3^{2n}$.
\end{lemma}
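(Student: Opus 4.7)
The plan is to prove the stronger bound $I_n(\nu) \le 3^n$, which immediately implies $I_n(\nu) \le 3^{2n}$, by an $SU(1,1)$-invariant Schur-type estimate applied inductively. The central ingredient is the elementary identity $|1 - z \overline{w}|^2 - (1 - |z|^2)(1 - |w|^2) = |z - w|^2 \ge 0$, which shows that the kernel
$$\tilde K(z,w) \coloneqq \frac{(1 - |z|^2)^{\nu/2}(1 - |w|^2)^{\nu/2}}{|1 - z \overline{w}|^\nu}$$
satisfies $0 \le \tilde K(z,w) \le 1$. Since $\tilde K$ depends only on the hyperbolic distance between $z$ and $w$, both $\tilde K$ and $d\iota$ are $SU(1,1)$-invariant.

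The first step is to evaluate $\int_{\D} \tilde K(z,w)\, d\iota(w)$. By $SU(1,1)$-invariance this integral is independent of $z$, so setting $z = 0$ reduces it to $\int_{\D}(1 - |w|^2)^{\nu/2}\, d\iota(w) = 2/(\nu - 2)$ by a direct radial computation. Consequently, for $\nu \ge 4$,
$$(\nu - 1)\int_{\D} \tilde K(z,w)\, d\iota(w) \;=\; \frac{2(\nu - 1)}{\nu - 2} \;\le\; 3.$$

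For the induction, define
$$F_k(z;\nu) \coloneqq (\nu - 1)^k \int_{\D^k} \frac{\prod_{i=1}^k (1 - |w_i|^2)^\nu}{|1 - z \overline{w_1}|^\nu \prod_{i=1}^{k-1} |1 - w_i \overline{w_{i+1}}|^\nu} \prod_{i=1}^k d\iota(w_i),$$
so that $F_0 \equiv 1$, one has the recursion $F_k(z;\nu) = (\nu - 1)\int_{\D} (1 - |w|^2)^\nu F_{k-1}(w;\nu) |1 - z \overline{w}|^{-\nu}\, d\iota(w)$, and $I_n(\nu) = (\nu - 1)\int_{\D}(1 - |z|^2)^\nu F_{n-1}(z;\nu)\, d\iota(z)$. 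I would prove by induction the pointwise bound $F_k(z;\nu) \le 3^k (1 - |z|^2)^{-\nu/2}$ for $\nu \ge 4$: the base $k = 0$ is immediate, and the inductive step is
$$F_k(z;\nu) \;\le\; 3^{k-1}(\nu - 1)\int_{\D} \frac{(1 - |w|^2)^{\nu/2}}{|1 - z \overline{w}|^\nu}\, d\iota(w) \;=\; \frac{3^{k-1}(\nu - 1)}{(1 - |z|^2)^{\nu/2}} \int_{\D} \tilde K(z,w)\, d\iota(w) \;\le\; \frac{3^k}{(1 - |z|^2)^{\nu/2}},$$
where the first inequality uses the inductive hypothesis, the equality rewrites the integrand in terms of $\tilde K$, and the last step applies the auxiliary bound. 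Applying the claim to $F_{n-1}$ and integrating against $(1 - |z|^2)^\nu d\iota$ yields $I_n(\nu) \le 3^{n-1}(\nu - 1)\int_{\D}(1 - |z|^2)^{\nu/2} d\iota(z) = 3^{n-1} \cdot 2(\nu - 1)/(\nu - 2) \le 3^n$. No serious obstacle arises; the only subtlety is the bookkeeping of half-powers of $(1 - |z|^2)$ that lets the invariant kernel $\tilde K$ emerge cleanly at every step.
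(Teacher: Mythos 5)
Your proof is correct, and it takes a genuinely different route from the paper. The paper splits into cases by the parity of $\nu$: for even $\nu = 2\kappa$ it expands each factor $(1 - z_i\li{z_{i+1}})^{-\kappa}$ into its Taylor series, identifies $I_n(\nu)$ with a squared norm in $\Hc_{\nu}^{\otimes(n-1)}$, and telescopes the resulting multiple sum using the exact identity $\sum_{i\ge 0}\frac{(\kappa)_i}{i!}\frac{(i+j)!}{(2\kappa)_{i+j}} = \frac{2\kappa-1}{\kappa-1}\frac{j!}{(\kappa)_j} \le 3\frac{j!}{(\kappa)_j}$; for odd $\nu$ it lowers the exponent to $\nu-1$ via $|1-x\li{y}|^2 \ge (1-|x|^2)(1-|y|^2)$, which costs an extra factor of $\bigl(\frac{\nu-1}{\nu-2}\bigr)^n$ and degrades the bound to $3^{2n}$. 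You instead run a Schur test along the chain with the weight $(1-|z|^2)^{-\nu/2}$, exploiting that the kernel $\tilde K(z,w)$ is a function of the invariant quantity $\frac{(1-|z|^2)(1-|w|^2)}{|1-z\li{w}|^2}$, so that the one-step integral $(\nu-1)\int_{\D}\tilde K(z,w)\,d\iota(w) = \frac{2(\nu-1)}{\nu-2} \le 3$ can be evaluated at $z=0$ and iterated; all steps are justified by Tonelli since the integrands are nonnegative. This avoids the parity split entirely and yields the uniform bound $3^n$ for all $\nu \ge 4$, which is strictly better than the stated $3^{2n}$ and more than suffices for the paper's application (only boundedness in $\nu$ is used later). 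What the paper's computation buys in exchange is an exact series representation of $I_n(\nu)$ for even $\nu$, which is closer in spirit to the reproducing-kernel calculations used elsewhere in the text; your argument is the more standard Forelli--Rudin/Schur-test device and generalizes more readily.
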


\begin{proof}
For $n = 1$ the integral simplifies to
$$ (\nu - 1) \int_{\D} (1 - |z_1|^2)^{\nu} d \iota(z_1) = \int_{\D} d \iota_{\nu}(z_1) = 1.$$
Now first we assume $\nu \geq 4$ is even, i.e. $\nu = 2 \kappa$. Then the integral is
\begin{flalign*}
& I_n(\nu) = \int_{\D^n} \left\lvert \frac{1}{(1 - z_1 \li{z_2}) \dots (1 - z_{n-1} \li{z_n})} \right\lvert^{2 \kappa} d \iota_{\nu}(z_1) \dots d \iota_{\nu}(z_n)
\\ & = \nm{ \sum_{i_1, \dots, i_{n-1}} \frac{ (\kappa)_{i_1}}{i_1 !} \dots \frac{ (\kappa)_{i_{n-1}}}{i_{n-1} !} z_1^{i_1} z_2^{i_1 + i_2} \dots z_{n-1}^{i_{n-1}}}^2_{\Hc_{\nu}^{\bigotimes n-1}}
\\ & = \sum_{i_1, \dots, i_{n-1}} ( \frac{(\kappa)_{i_1}}{i_1 !} \dots \frac{(\kappa)_{i_{n-1}}}{i_{n-1} !})^2 \frac{i_1 !}{(2 \kappa)_{i_1}} \frac{(i_1 + i_2) !}{(2 \kappa)_{i_1 + i_2}} \dots \frac{(i_{n-2} + i_{n-1}) !}{(2 \kappa)_{i_{n-2} + i_{n-1}}} \frac{i_{n-1} !}{(2 \kappa)_{i_{n-1}}}.
\end{flalign*}
Now we need the following inequality
\begin{equation}
\label{combineq}
3 \frac{j!}{(\kappa)_j} \geq \frac{2 \kappa - 1}{\kappa - 1} \frac{j!}{(\kappa)_j} = \sum_{i=0}^{\infty} \frac{ (\kappa)_i}{i!} \frac{(i+j)!}{(2 \kappa)_{i+j}}.
\end{equation}
We prove this. The inequality is obvious, for the equality we get
\begin{flalign*}
& \frac{2 \kappa - 1}{\kappa - 1} \frac{j!}{(\kappa)_j} = \frac{2 \kappa - 1}{\kappa - 1} \nm{z^j}_{\kappa}^2 = \frac{2 \kappa - 1}{\kappa - 1} \int_{\D} |z|^{2j} d \iota_{\kappa}(z)
\\ & = (2 \kappa - 1) \int_{\D} |z|^{2j}(1 - |z|^2)^{\kappa} d \iota(z) = (2 \kappa - 1) \int_{\D} \frac{|z|^{2j} (1 - |z|^2)^{2 \kappa}}{(1 - |z|^2)^{\kappa}} d \iota(z)
\\ & = \int_{\D} \sum_{i=0}^{\infty} \frac{(\kappa)_i}{i!} |z|^{2(i+j)} d \iota_{2 \kappa}(z) = \sum_{i=0}^{\infty} \frac{(\kappa)_i}{i!} \nm{z^{i+j}}_{2 \kappa} = \sum_{i=0}^{\infty} \frac{(\kappa)_i}{i!} \frac{(i+j)!}{(2 \kappa)_{i+j}}.
\end{flalign*}
Then for our sum
$$\sum_{i_1, \dots, i_{n-1}} ( \frac{(\kappa)_{i_1}}{i_1 !} \dots \frac{(\kappa)_{i_n}}{i_n !})^2 \frac{i_1 !}{(2 \kappa)_{i_1}} \frac{(i_1 + i_2) !}{(2 \kappa)_{i_1 + i_2}} \dots \frac{(i_{n-2} + i_{n-1}) !}{(2 \kappa)_{i_{n-2} + i_{n-1}}} \frac{i_{n-1} !}{(2 \kappa)_{i_{n-1}}}$$
we get that by inequality (\ref{combineq}) the part summing over $i_1$ is
\begin{flalign*}
& \sum_{i_1 = 0}^{\infty} ( \frac{(\kappa)_{i_1}}{i_1 !})^2 \frac{i_1 !}{(2 \kappa)_{i_1}} \frac{(i_1 + i_2) !}{(2 \kappa)_{i_1 + i_2}} \leq \sum_{i_1 = 0}^{\infty} \frac{(\kappa)_{i_1}}{i_1 !} \frac{i_1 !}{(2 \kappa)_{i_1}} \sum_{i_1 = 0}^{\infty} \frac{(\kappa)_{i_1}}{i_1 !} \frac{(i_1 + i_2) !}{(2 \kappa)_{i_1 + i_2}}
\\ & \leq 9 \frac{0!}{(\kappa)_0} \frac{i_2 !}{(\kappa)_{i_2}} = 9 \frac{i_2 !}{(\kappa)_{i_2}}.
\end{flalign*}
Thus for our sum we get
\begin{flalign*}
& \sum_{i_1, \dots, i_{n-1}} ( \frac{(\kappa)_{i_1}}{i_1 !} \dots \frac{(\kappa)_{i_{n-1}}}{i_{n-1} !})^2 \frac{i_1 !}{(2 \kappa)_{i_1}} \frac{(i_1 + i_2) !}{(2 \kappa)_{i_1 + i_2}} \dots \frac{(i_{n-2} + i_{n-1}) !}{(2 \kappa)_{i_{n-2} + i_{n-1}}} \frac{i_{n-1} !}{(2 \kappa)_{i_{n-1}}}
\\ & \leq 9 \sum_{i_2, \dots, i_{n-1}} \frac{(\kappa)_{i_2}}{i_2 !} ( \frac{(\kappa)_{i_3}}{i_3 !} \dots \frac{(\kappa)_{i_{n-1}}}{i_{n-1} !})^2 \frac{(i_2 + i_3) !}{(2 \kappa)_{i_2 + i_3}} \dots \frac{(i_{n-2} + i_{n-1}) !}{(2 \kappa)_{i_{n-2} + i_{n-1}}} \frac{i_{n-1} !}{(2 \kappa)_{i_{n-1}}}.
\end{flalign*}
Repeatedly applying Inequality \ref{combineq} this is smaller than
\begin{flalign*}
& 3^{n-1} \sum_{i_{n-1} = 0}^{\infty} \frac{(\kappa)_{i_{n-1}}}{i_{n-1} !} \frac{i_{n-1} !}{(2 \kappa)_{i_{n-1}}} \leq 3^n \frac{0!}{(\kappa)_0} = 3^n.
\end{flalign*}
Now we look at what happens when $\nu$ is odd. Using
$$|1 - x \li{y}|^2 \geq (1 - |x|^2)(1 - |y|^2)$$
we see
\begin{flalign*}
& I_n(\nu) = (\nu - 1)^n \int_{\D^n}  \left\lvert \frac{(1 - |z_1|^2)^{\nu} \dots (1 - |z_n|^2)^{\nu}}{(1 - z_1 \li{z_2})^{\nu} \dots (1 - z_{n-1} \li{z_n})^{\nu}}  \right\rvert d \iota(z_1) \dots d \iota(z_n)
\\ & \leq (\nu - 2)^n (\frac{\nu-1}{\nu - 2})^n \int_{\D^n}  \left\lvert \frac{(1 - |z_1|^2) \dots (1 - |z_n|^2)}{(1 - z_1 \li{z_2}) \dots (1 - z_{n-1} \li{z_n})}  \right\rvert^{\nu - 1} d \iota(z_1) \dots d \iota(z_n)
\\ & \leq 3^n (\frac{\nu-1}{\nu - 2})^n \leq 3^{2n}.
\end{flalign*}
We have now proven our result, i.e. given a bound independent of $\nu$.
\end{proof}

We now prove the following lemma.

\begin{lemma}
\label{tracexncpt}
For any $n \geq 1$ and $f \in C_c(\D)$
\begin{flalign*}
& \lim_{\nu \rightarrow \infty} \frac{1}{\nu-1} \Tr(( (\nu - 1) R_{\nu}^*(f) )^n) = \int_{\D} f(z)^n d \iota(z).
\end{flalign*}
\end{lemma}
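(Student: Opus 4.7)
The plan is to first express the trace as an explicit $n$-fold integral over $\D^n$, then reduce the $(n-1)$ inner integrations using an $SU(1,1)$-translation, and finally extract the diagonal limit by dominated convergence. Using the kernel formula for $R_\nu^*(f)$ from Lemma \ref{Toeplitzrmu}, I would iterate composition $n$ times and collapse the intermediate $K^\nu$ factors via the reproducing identity $\int K^\nu(x,z)K^\nu(z,y)\,d\iota_\nu(z)=K^\nu(x,y)$. Taking the trace identifies endpoints cyclically and produces
\[
\tfrac{1}{\nu-1}\Tr\bigl(((\nu-1)R_\nu^*(f))^n\bigr) = (\nu-1)^{n-1}\int_{\D^n}\Bigl(\prod_{i=1}^n f(w_i)\Bigr)\frac{\prod_i (1-|w_i|^2)^\nu}{\prod_i (1-w_i\overline{w_{i+1}})^\nu}\,d\iota(w_1)\cdots d\iota(w_n),
\]
with cyclic convention $w_{n+1}:=w_1$.

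Next I would fix $w:=w_1$ and substitute $w_i=\phi_w(u_i)$ for $i\geq 2$, where $\phi_w(u)=(u+w)/(1+\bar w u)$ is the Möbius automorphism with $\phi_w(0)=w$; set $u_1:=0$. The identities $1-|\phi_w(u)|^2=(1-|w|^2)(1-|u|^2)/|1+\bar w u|^2$ and $1-\phi_w(u)\overline{\phi_w(v)}=(1-|w|^2)(1-u\bar v)/[(1+\bar w u)(1+w\bar v)]$ yield a complete cancellation: all $(1-|w|^2)^{n\nu}$ factors and all phase factors $\prod_{i=2}^n|1+\bar w u_i|^{2\nu}$ cancel between numerator and denominator, while the cyclic-closing terms $1-u_1\bar u_2$ and $1-u_n\bar u_1$ in the denominator collapse to $1$ thanks to $u_1=0$. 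Combining with the $SU(1,1)$-invariance $d\iota(w_i)=d\iota(u_i)$ and $(\nu-1)(1-|u_i|^2)^\nu d\iota(u_i)=d\iota_\nu(u_i)$, the trace becomes
\[
\tfrac{1}{\nu-1}\Tr\bigl(((\nu-1)R_\nu^*(f))^n\bigr) = \int_\D f(w)\,G_\nu(w)\,d\iota(w),
\]
where $G_\nu(w):=\int_{\D^{n-1}}\prod_{i=2}^n f(\phi_w(u_i))\cdot\prod_{i=2}^{n-1}(1-u_i\overline{u_{i+1}})^{-\nu}\prod_{i=2}^n d\iota_\nu(u_i)$.

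I would then show $G_\nu(w)\to f(w)^{n-1}$ pointwise. Let $d\tilde\mu_\nu$ denote the complex kernel in $G_\nu$ (depending only on $u$, not on $w$). Three facts drive the argument: (i) $\int d\tilde\mu_\nu=1$, obtained by expanding each $(1-u_i\bar u_{i+1})^{-\nu}$ as a power series in $u_i\bar u_{i+1}$ and iterating $\int u^k d\iota_\nu(u)=\delta_{k,0}$; (ii) the uniform total-variation bound $|\tilde\mu_\nu|(\D^{n-1})\leq 3^{2(n-1)}$, which is precisely Lemma \ref{abskernbdd} applied to $n-1$ variables; and (iii) the tail estimate $|\tilde\mu_\nu|(\{u:\max_i|u_i|\geq\epsilon\})\to 0$ for each $\epsilon>0$. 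Using $\int d\tilde\mu_\nu=1$ to write $G_\nu(w)-f(w)^{n-1}=\int[\prod f(\phi_w(u_i))-f(w)^{n-1}]\,d\tilde\mu_\nu$, uniform continuity of $f$ near $w$ (via $\phi_w(0)=w$) together with (ii) controls the near-diagonal region, while (iii) controls its complement. Finally, the uniform bound $|G_\nu(w)|\leq\|f\|_\infty^{n-1}\cdot 3^{2(n-1)}$ combined with compact support of $f$ gives $\int f G_\nu\,d\iota\to\int f^n\,d\iota$ by dominated convergence.

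The main obstacle is establishing (iii): since $d\tilde\mu_\nu$ is complex-valued rather than a probability measure, and $1/\prod(1-u_i\bar u_{i+1})^\nu$ is unbounded near the boundary, standard weak-convergence arguments do not apply directly. My route is to extract on $\{|u_j|\geq\epsilon\}$ the exponentially small factor $(1-\epsilon^2)^{\nu/2}$ from $(1-|u_j|^2)^\nu$ and bound the remaining integral by the same inductive chain-and-Berezin argument that proves Lemma \ref{abskernbdd}, now carried out with one exponent reduced from $\nu$ to $\nu/2$; the factor $(1-\epsilon^2)^{\nu/2}$ then forces the tail to zero.
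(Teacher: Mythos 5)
Your overall architecture coincides with the paper's proof: the same cyclic trace formula over $\D^n$, the same reduction of the inner $(n-1)$ integrations to a chain anchored at the origin (the paper phrases your Möbius substitution as $SU(1,1)$-invariance of $R_\nu R_\nu^*$, evaluating at $0$ so that $K^\nu(0,\cdot)=1$ opens the cycle), the same normalization $\int d\tilde\mu_\nu=1$, the same use of Lemma \ref{abskernbdd} on the near-diagonal region, and a final dominated-convergence step. Your items (i) and (ii) are exactly the ingredients the paper uses.

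The genuine gap is in the tail estimate (iii), precisely where you flag the main obstacle, and the fix you propose does not work as stated. If you extract $(1-\epsilon^2)^{\nu/2}$ from the single factor $(1-|u_j|^2)^{\nu}$, the leftover integrand has $u_j$ weighted by $d\iota_{\nu/2}(u_j)$ while $u_j$ still appears in denominator factors $(1-u_{j\pm1}\overline{u_j})^{-\nu}$ carrying the full exponent $\nu$. The chain-and-Berezin induction behind Lemma \ref{abskernbdd} then breaks down: the analogue of inequality (\ref{combineq}) would require $\sum_{i}\frac{(\kappa)_i}{i!}\frac{(i+j)!}{(\kappa)_{i+j}}$ to be finite (the monomial norms for the $u_j$-integration are now taken in $\Hc_{\kappa}$ rather than $\Hc_{2\kappa}$), and this series already diverges for $j=0$; equivalently, applying $|1-u\overline{v}|^{-\nu}\leq(1-|u|^2)^{-\nu/2}(1-|v|^2)^{-\nu/2}$ to either link adjacent to $u_j$ leaves a non-integrable negative power of $(1-|u_j|^2)$. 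The smallness has to be extracted from the whole ratio rather than from one numerator factor: setting $F(u)=\prod_{i=2}^{n}(1-|u_i|^2)\,/\,\prod_{i=2}^{n-1}|1-u_i\overline{u_{i+1}}|$, one has $F\leq\sqrt{(1-|u_2|^2)(1-|u_n|^2)}\leq 1$ with equality forcing $u=0$, so $F\leq r_\epsilon<1$ on $\{\max_i|u_i|\geq\epsilon\}$; writing $F^{\nu}=F^{\nu-\nu_0}F^{\nu_0}$ and applying Lemma \ref{abskernbdd} at the fixed level $\nu_0$ bounds the tail by $r_\epsilon^{\nu-\nu_0}\bigl(\tfrac{\nu-1}{\nu_0-1}\bigr)^{n-1}3^{2(n-1)}\rightarrow 0$. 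This is the route the paper takes (via $|1-x\overline{y}|^2\geq(1-|x|^2)(1-|y|^2)$ with equality only at $x=y$, plus dominated convergence); with that substitution your argument closes.
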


\begin{proof}
We note

\begin{flalign*}
& R_{\nu}^*(f)^n(x,y)
\\ & = \int_{\D^n} \frac{K^{\nu}(x,x_1) K^{\nu}(x_1, x_2) \dots K^{\nu}(x_n, y) f(x_1) \dots f(x_n)}{K^{\nu}(x_1,x_1) \dots K^{\nu}(x_n, x_n)} d \iota(x_1) \dots d \iota(x_n).
\end{flalign*}
Next we see that

\begin{flalign*}
& \frac{1}{\nu-1} \Tr(( (\nu - 1) R_{\nu}^*(f) )^n) = (\nu-1)^{n} \int_{\D} R_{\nu}(R_{\nu}^*(f)^n)(z) d \iota(z)
\\ & = (\nu - 1)^{n} \int_{\D^{n+1}} \frac{K^{\nu}(z,x_1) \dots K^{\nu}(x_n,z) f(x_1) \dots f(x_n)}{K^{\nu}(z,z) K^{\nu}(x_1, x_1) \dots K^{\nu}(x_n, x_n)} d \iota(x_1) \dots \iota(x_{n}) d \iota(z)
\\ & = (\nu - 1)^{n-1} \int_{\D^{n+1}} \frac{K^{\nu}(z,x_1) \dots K^{\nu}(x_n,z) f(x_1) \dots f(x_n)}{K^{\nu}(x_1, x_1) \dots K^{\nu}(x_n, x_n)} 
\\ & d \iota_{\nu}(z) d \iota(x_1) \dots d \iota(x_{n})
\\ & = (\nu - 1)^{n-1} \int_{\D^n} \frac{K^{\nu}(x_1, x_2) \dots K^{\nu}(x_n,x_1) f(x_1) \dots f(x_n)}{K^{\nu}(x_1, x_1) \dots K^{\nu}(x_n, x_n)}  d \iota(x_1) \dots d \iota(x_{n})
\\ & = (\nu - 1)^{n-1} \int_{\D} f(x_1) \int_{\D^{n-1}} \frac{K^{\nu}(x_1, x_2) \dots K^{\nu}(x_n,x_1) f(x_2) \dots f(x_n)}{K^{\nu}(x_1, x_1) \dots K^{\nu}(x_n, x_n)} \\ & d \iota(x_2) \dots \iota(x_{n}) d \iota(x_1)
\end{flalign*}
Now we study
\begin{equation}
\label{ptwiseconving}
(\nu - 1)^{n-1} \int_{\D^{n-1}} \frac{K^{\nu}(x_1, x_2) \dots K^{\nu}(x_n,x_1) f(x_2) \dots f(x_n)}{K^{\nu}(x_1, x_1) \dots K^{\nu}(x_n, x_n)} d \iota(x_2) \dots \iota(x_{n}),
\end{equation}
which is equal to $(\nu - 1)^{n-1} R_{\nu}(R_{\nu}^*(f)^{n-1})(x_1)$. We will prove
\begin{equation}
\label{limitptwise}
\lim_{\nu \rightarrow \infty} (\nu - 1)^{n-1} R_{\nu}(R_{\nu}^*(f)^{n-1})(x_1) = f(x_1)^{n-1}
\end{equation}
for any $x_1 \in \D$. Now the function $f$ is compactly supported and
$$(\nu - 1)^{n-1}R_{\nu}(R_{\nu}^*(f)^{n-1})$$
is continuous by Definition \ref{Rmudef} and bounded independent of $\nu$ by Proposition \ref{factsRmu} and Lemma \ref{Toeplitzrmu}, so Equation (\ref{limitptwise}) implies
$$\lim_{\nu \rightarrow \infty} \frac{1}{\nu} \Tr(R_{\nu}^*(f)^n) = \int_{\D} f(z)^n d \iota (z).$$
We now prove pointwise convergence of Equation (\ref{ptwiseconving}) where we replace $n-1$ by $n$ for the sake of convenience. Note that for $z = g \cdot 0 \in \D$

\begin{flalign*}
& R_{\nu}(R_{\nu}^*(f)^n)(z) = (g^{-1} R_{\nu} (R_{\nu}^*(f)^n))(0) = R_{\nu}(R_{\nu}^*(g^{-1}f)^n)(0).
\end{flalign*}
We conclude it is enough to prove convergence in the point $0$. Observe

\begin{flalign*}
& (\nu - 1)^{n} R_{\nu}(R_{\nu}^*(f)^n)(0)
\\ & = (\nu - 1)^n \int_{\D^n} K^{\nu}(0,z_1) K^{\nu}(z_1, z_2) \dots K^{\nu}(z_n,0) (1 - |z_1|^2)^{\nu} \dots (1 - |z_n|^2)^{\nu} \cdot 
\\ & f(z_1) \dots f(z_n) d \iota(z_1) \dots d \iota(z_n)
\\ & = (\nu - 1)^n \int_{\D^n} \frac{(1 - |z_1|^2)^{\nu} \dots (1 - |z_n|^2)^{\nu}}{(1 - z_1 \li{z_2})^{\nu} \dots (1 - z_{n-1} \li{z_n})^{\nu}} f(z_1) \dots f(z_n) d \iota(z_1) \dots d \iota(z_n).
\end{flalign*}
Now we choose $(0, \dots, 0) \in U \subseteq \D^n$ open such that for $(x_1, \dots, x_n) \in U$
$$|f(x_1) \dots f(x_n) - f(0)^n| < \epsilon.$$
Using Lemma \ref{abskernbdd} it follows that
\begin{flalign*}
& |(\nu - 1)^{n} R_{\nu}(R_{\nu}^*(f)^n)(0) - f(0)^n|
\\ & = (\nu - 1)^n | \int_{\D^n} \frac{(1 - |z_1|^2)^{\nu} \dots (1 - |z_n|^2)^{\nu}}{(1 - z_1 \li{z_2})^{\nu} \dots (1 - z_{n-1} \li{z_n})^{\nu}} (f(z_1) \dots f(z_n) - f(0)^n)
\\ & d \iota(z_1) \dots d \iota(z_n)|
\\ & \leq (\nu - 1)^n \int_{U} \left\lvert \frac{(1 - |z_1|^2)^{\nu} \dots (1 - |z_n|^2)^{\nu}}{(1 - z_1 \li{z_2})^{\nu} \dots (1 - z_{n-1} \li{z_n})^{\nu}} (f(z_1) \dots f(z_n) - f(0)^n)  \right\rvert
\\ & d \iota(z_1) \dots d \iota(z_n)
\\ & + (\nu - 1)^n \int_{\D^n \backslash U} \left\lvert \frac{(1 - |z_1|^2)^{\nu} \dots (1 - |z_n|^2)^{\nu}}{(1 - z_1 \li{z_2})^{\nu} \dots (1 - z_{n-1} \li{z_n})^{\nu}} (f(z_1) \dots f(z_n) - f(0)^n)  \right\rvert
\\ & d \iota(z_1) \dots d \iota(z_n)
\\ & \leq \epsilon 3^{2n}
\\ & + (\nu - 1)^n \int_{\D^n \backslash U} \left\lvert \frac{(1 - |z_1|^2)^{\nu} \dots (1 - |z_n|^2)^{\nu}}{(1 - z_1 \li{z_2})^{\nu} \dots (1 - z_{n-1} \li{z_n})^{\nu}} (f(z_1) \dots f(z_n) - f(0)^n) \right\rvert
\\ & d \iota(z_1) \dots d \iota(z_n).
\end{flalign*}
Observe that $|1 - x \li{y}|^2 \geq (1 - |x|^2)(1 - |y|^2)$ with equality if and only if $x = y$, so for $(z_1, \dots z_n) \in \D^n \backslash U$
$$ | \frac{(1 - |z_1|^2) \dots (1 - |z_n|^2)}{(1 - z_1 \li{z_2}) \dots (1 - z_{n-1} \li{z_n})} | \leq \sqrt{1-|z_1|^2} \sqrt{1-|z_n|^2} < r < 1$$
for some $r \in \R$. 
Thus Lebesgue's dominated convergence theorem implies
\begin{flalign*}
& \lim_{\nu \rightarrow \infty} (\nu - 1)^n \int_{\D^n \backslash U} | \frac{(1 - |z_1|^2)^{\nu} \dots (1 - |z_n|^2)^{\nu}}{(1 - z_1 \li{z_2})^{\nu} \dots (1 - z_{n-1} \li{z_n})^{\nu}} (f(z_1) \dots f(z_n) - f(0)^n)|
\\ & d \iota(z_1) \dots d \iota(z_n) = 0.
\end{flalign*}
It follows that for any $x_1 \in \D$
$$ \lim_{\nu \rightarrow \infty} R_{\nu}(R_{\nu}^*(f)^n)(x_1) =  f(x_1)^n,$$
which proves the theorem.
\end{proof}

We prove another lemma.

\begin{lemma}
\label{tracexn}
For $f \in L^2(\D, d \iota)$ bounded we have
\begin{flalign*}
& \lim_{\nu \rightarrow \infty} \frac{1}{\nu-1} \Tr(( (\nu - 1) R_{\nu}^*(f) )^{2n}) = \int_{\D} f(z)^{2n} d \iota(z)
\end{flalign*}
for $n \geq 1$.
\end{lemma}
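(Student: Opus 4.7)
The plan is an approximation argument based on Lemma \ref{tracexncpt}. Writing $T_f \coloneqq (\nu-1) R_{\nu}^*(f)$, which is the Toeplitz operator of $f$ by Lemma \ref{Toeplitzrmu}, I have two basic bounds: the operator-norm estimate $\nm{T_f} \leq \nm{f}_{\infty}$ (standard for Toeplitz operators, via $\langle T_f u,u\rangle_{\nu} = \int f |u|^2 \,d\iota_{\nu}$) and, from Proposition \ref{Rmubdd}, the Hilbert--Schmidt estimate $\nm{T_f}_2 \leq \sqrt{\nu-1}\, \nm{f}_2$. First I choose a sequence $f_m \in C_c(\D)$ with $\nm{f_m}_{\infty} \leq \nm{f}_{\infty}$ and $f_m \to f$ in $L^2(\D, d\iota)$, obtained by multiplying $f$ by the characteristic function of a relatively compact sub-disk and mollifying without inflating the sup-norm. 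Lemma \ref{tracexncpt} then yields $\lim_{\nu} \frac{1}{\nu-1} \Tr(T_{f_m}^{2n}) = \int_{\D} f_m^{2n}\, d\iota$ for every fixed $m$.

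Next I compare the $f$ and $f_m$ traces using the telescoping identity
$$ T_f^{2n} - T_{f_m}^{2n} = \sum_{k=0}^{2n-1} T_f^{k}\, T_{f-f_m}\, T_{f_m}^{2n-1-k}. $$
On each summand I apply the Schatten--Hölder inequality $|\Tr(AB)| \leq \nm{A}_2 \nm{B}_2$ together with $\nm{XY}_2 \leq \nm{X}\, \nm{Y}_2$, grouping $T_f^k T_{f-f_m}$ as one Hilbert--Schmidt factor and $T_{f_m}^{2n-1-k}$ as the other. Using the two bounds above and $\nm{f_m}_{\infty} \leq \nm{f}_{\infty}$, each summand is bounded by $(\nu-1)\, \nm{f}_{\infty}^{2n-2}\, \nm{f_m}_2\, \nm{f-f_m}_2$; summing the $2n$ terms and dividing by $\nu-1$ gives
$$ \frac{1}{\nu-1} \bigl| \Tr(T_f^{2n}) - \Tr(T_{f_m}^{2n}) \bigr| \leq 2n\, \nm{f}_{\infty}^{2n-2}\, \nm{f_m}_2\, \nm{f-f_m}_2, $$
a bound uniform in $\nu$ that vanishes as $m \to \infty$.

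On the integral side, a parallel estimate follows by writing $f^{2n} - f_m^{2n} = (f - f_m) \sum_{k=0}^{2n-1} f^k f_m^{2n-1-k}$, applying Cauchy--Schwarz in $L^2(\D, d\iota)$, and bounding $\nm{|f|^{2n-1}}_2 \leq \nm{f}_{\infty}^{2n-2}\, \nm{f}_2$ (and similarly for $f_m$), which shows $\int f_m^{2n}\,d\iota \to \int f^{2n}\,d\iota$. A standard three-epsilon argument, combining this with the previous paragraph and the known limit from Lemma \ref{tracexncpt} for each $f_m$, closes the proof. The main delicacy is the simultaneous $L^{\infty}$ and $L^2$ control of the approximants $f_m$, which is what makes the two error estimates genuinely uniform in both $\nu$ and $m$ and allows the three-epsilon scheme to close; the restriction to even powers $2n$ is essential here, since it is what guarantees via $\Tr(T_f^{2n}) = \nm{T_f^n}_2^2$ that the left-hand side of the bound is controlled by the Hilbert--Schmidt norm estimate from Proposition \ref{Rmubdd}.
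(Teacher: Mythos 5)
Your proof is correct and follows essentially the same route as the paper's: approximation by compactly supported functions with uniform control of the sup-norm, the two bounds $\nm{T_f} \leq \nm{f}_{\infty}$ and $\nm{T_f}_{2} \leq \sqrt{\nu-1}\,\nm{f}_{2}$, a telescoping estimate giving control of the trace difference uniformly in $\nu$, and a three-epsilon argument with Lemma \ref{tracexncpt} (the paper telescopes the operator difference $T_f^{n}-T_{f_m}^{n}$ in Hilbert--Schmidt norm by induction and then pairs via Cauchy--Schwarz, which is the same computation). The one slip is the term $k=2n-1$ of your telescoping sum, where your grouping leaves $T_{f_m}^{0}=I$ as the second factor, and the identity is not Hilbert--Schmidt; regroup that single term as $|\Tr(T_f^{2n-1}\,T_{f-f_m})| \leq \nm{T_f^{2n-1}}_{2}\,\nm{T_{f-f_m}}_{2}$, which gives a bound of the same order with $\nm{f}_{2}$ in place of $\nm{f_m}_{2}$.
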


\begin{proof}
Let $f \in L^2(\D, d \iota)$ be bounded. Then $f \in L^p(\D, d \iota)$ for any $p > 2$, implying that
$$ \int_{\D} f(z)^{2n} d \iota(z)$$
is well-defined. We claim that there is a sequence $\{f_k\}_k$ of compactly supported functions such that
$$ \lim_{k \rightarrow \infty} \nm{f - f_k}_2 = 0$$
and $\nm{f_k}_{\infty} \leq \nm{f}_{\infty}$ for all $k$. The way to see this is the following. Let us denote $f|_A = \chi_A f$, where $\chi_A$ is te indicator function. Let $k$ be an integer, then we have a finite measure set $E_k \subseteq \D$ such that
$$\nm{f|_{E_k} - f}_2 \leq \frac{1}{k}.$$
Then by Lusin's theorem \cite[Theorem 7.10]{foll} there exists a function $f_k \in C_c(\D)$ such that $\nm{f_k}_{\infty} \leq \nm{f|_{E_k}}_{\infty} \leq \nm{f}_{\infty}$ and
$$ \mu(\{ f|_{E_k} \neq f_k \}) \leq \frac{1}{k^2}.$$
Thus $\nm{f - f_k}_2 \leq (1 + 2 \nm{f}_{\infty})\frac{1}{k},$
which proves this claim.

Now we write $T_f$ for $(\nu - 1) R_{\nu}^*(f)$ and we get by Proposition \ref{Rmubdd}
$$ \frac{1}{\sqrt{\nu - 1}} \nm{T_f - T_{f_k} }_{2} \leq \nm{f - f_k}_2,$$
and thus
$$ \lim_{\nu \rightarrow \infty} \frac{1}{\sqrt{\nu - 1}} \nm{T_f - T_{f_k} }_{2} = 0,$$
Now we prove by induction for any integer $n$
$$ \lim_{k \rightarrow \infty} \frac{1}{\sqrt{\nu - 1}} \nm{T_f^n - T_{f_k}^n }_{2} = 0,$$
with rate of convergence independent of $\nu$, for any $n \in \N$. The case $n = 1$ is clear, now let $n > 0$.
\begin{flalign*}
& \frac{1}{\sqrt{\nu - 1}} \nm{T_f^{n+1} - T_{f_k}^{n+1}}
\\ & \leq \frac{1}{\sqrt{\nu - 1}} (\nm{T_f ( T_f^{n} - T_{f_k}^n)}_{2} + \nm{(T_f - T_{f_k}) T_{f_k}^n}_{2})
\\ & \leq \frac{1}{\sqrt{\nu - 1}} \nm{T_f} \cdot \nm{T_f^{n} - T_{f_k}^n}_{2} + \frac{1}{\sqrt{\nu - 1}} \nm{T_f - T_{f_k}}_{2} \cdot \nm{T_{f_k}^n}
\\ & \leq \frac{1}{\sqrt{\nu - 1}} \nm{f}_{\infty} \nm{T_f^{n} - T_{f_k}^n}_{2} + \frac{1}{\sqrt{\nu - 1}} \nm{T_f - T_{f_k}}_{2} \cdot \nm{f_k}_{\infty}^n
\\ & \leq \frac{1}{\sqrt{\nu - 1}} \nm{f}_{\infty} \nm{T_f^{n} - T_{f_k}^n}_{2} + \frac{1}{\sqrt{\nu - 1}} \nm{T_f - T_{f_k}}_{2} \nm{f}_{\infty}^n,
\end{flalign*}
where we have used that $\nm{T_f} = \nm{P M_f P} \leq \nm{f}_{\infty}$. Thus indeed for all $n \in \N$
$$ \lim_{k \rightarrow \infty} \frac{1}{\sqrt{\nu - 1}} \nm{T_f^n - T_{f_k}^n }_{2} = 0,$$
and the rate of convergence is independent of $\nu$. Thus by explicit realization of the Hilbert Schmidt norm
\begin{flalign*}
& \lim_{k \rightarrow \infty} \frac{1}{\nu - 1} \Tr(((\nu - 1) R_{\nu}^*(f_k))^{2n}) - \frac{1}{\nu - 1} \Tr(((\nu - 1) R_{\nu}^*(f))^{2n})
\\ & = \lim_{k \rightarrow \infty} \frac{1}{\nu - 1} \langle ((\nu - 1)R_{\nu}^*(f_k))^n, ((\nu - 1) R_{\nu}^*(f_k)^*)^n \rangle_{S^2(\Hc_{\mu})}
\\ & - \frac{1}{\nu - 1} \langle ((\nu - 1)R_{\nu}^*(f))^{n}, ((\nu - 1)R_{\nu}^*(f)^*)^{n} \rangle_{S^2(\Hc_{\mu})} = 0.
\end{flalign*}
for any $\nu$. Furthermore, for any $p > 2$
$$ \lim_{k \rightarrow \infty} \nm{f-f_k}_p = 0.$$
Thus
\begin{flalign*}
& |\frac{1}{\nu-1} \Tr(( (\nu - 1) R_{\nu}^*(f) )^{2n}) - \int_{\D} f(z)^{2n} d \iota(z)|
\\ & \leq |\frac{1}{\nu-1} \Tr(( (\nu - 1) R_{\nu}^*(f) )^{2n}) - \frac{1}{\nu-1} \Tr(( (\nu - 1) R_{\nu}^*(f_k) )^{2n})|
\\ & + |\frac{1}{\nu-1} \Tr(( (\nu - 1) R_{\nu}^*(f_k) )^{2n}) - \int_{\D} f_k(z)^{2n} d \iota(z)|
\\ & + |\int_{\D} f_k(z)^{2n} d \iota(z) - \int_{\D} f(z)^{2n} d \iota(z)|.
\end{flalign*}
We can choose $k$ first to bound
$$|\frac{1}{\nu-1} \Tr(( (\nu - 1) R_{\nu}^*(f) )^{2n}) - \frac{1}{\nu-1} \Tr(( (\nu - 1) R_{\nu}^*(f_k) )^{2n})|$$
and
$$ |\int_{\D} f_k(z)^{2n} d \iota(z) - \int_{\D} f(z)^{2n} d \iota(z)| $$
independently of $\nu$, and then choose $N$ so that
$$|\frac{1}{\nu-1} \Tr(( (\nu - 1) R_{\nu}^*(f_k) )^{2n}) - \int_{\D} f_k(z)^{2n} d \iota(z)|$$
becomes arbitrarily small for $\nu \geq N$ by Lemma \ref{tracexncpt}. This proves our lemma.
\end{proof}

We recall that
$$ \mathcal{T}_{\mu,k}^{\nu}(R_{\mu}^*(f)) = R_{\mu + \nu + 2k}^* ( B_{\mu + \nu + 2k})^{-1} E_{\mu,k}^{\nu}(f),$$
so we also need to study
$$(B_{\mu + \nu + 2k})^{-1} E_{\mu,k}^{\nu}(f).$$
We do this in the next section.

\section{Inverse Berezin transform}
\label{inversebersec}


Our goal in this section is to prove the following.

\begin{proposition}
\label{ourinverseberezin}
We have
$$\lim_{\nu \rightarrow \infty} \nm{ ((\mu + \nu + 2k - 1)B_{\mu + \nu + 2k})^{-1} E_{\mu,k}^{\nu}(f) - E_{\mu,k}(f)}_{2} = 0.$$
\end{proposition}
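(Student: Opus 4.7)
My plan is to use the Plancherel decomposition of Proposition \ref{helginv} to diagonalize every operator in sight, reducing the $L^{2}$-convergence statement to pointwise convergence of Fourier multipliers plus a uniform domination, so that Lebesgue's dominated convergence theorem applies.

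First, I would write everything as a Fourier multiplier. By Proposition \ref{berezineigen} the operator $B_{\nu}$ corresponds on the spectral side to multiplication by $b_{\nu}(\lambda)/(\nu-1)$, and hence so does $((\nu-1)B_{\nu})^{-1}$, which acts as multiplication by $1/b_{\nu}(\lambda)$. Combined with the explicit formula in Proposition \ref{Elimit}, the operator $((\mu+\nu+2k-1)B_{\mu+\nu+2k})^{-1} E_{\mu,k}^{\nu}$ corresponds to multiplication by
$$m_{\nu}(\lambda) := \frac{C_{\mu,\nu,k}^{2}}{b_{\mu+\nu+2k}(\lambda)} \sum_{j=0}^{k} (-1)^{j} \binom{k}{j} \frac{(\nu+k-j)_{k}}{(\nu)_{k}} \frac{b_{\mu+j}(\lambda)}{\mu+j-1},$$
and $E_{\mu,k}$ corresponds to $m(\lambda) := \frac{(\mu)_{k}}{k!} \sum_{j=0}^{k} (-1)^{j} \binom{k}{j} \frac{b_{\mu+j}(\lambda)}{\mu+j-1}$. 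By the Plancherel isometry, the squared $L^{2}$-norm in the proposition equals a constant multiple of $\int_{\R}\int_{S^{1}} |m_{\nu}(\lambda)-m(\lambda)|^{2} |\tilde{f}(\lambda,b)|^{2} |c(\lambda)|^{-2}\, db\, d\lambda$, so the claim reduces to showing this integral tends to zero.

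Next, I would establish the pointwise convergence $m_{\nu}(\lambda) \to m(\lambda)$: Lemma \ref{boundsbnulambda} gives $b_{\mu+\nu+2k}(\lambda) \to 1$ for each fixed $\lambda$, Proposition \ref{constantcmunuk} gives $C_{\mu,\nu,k}^{2} \to (\mu)_{k}/k!$, and $(\nu+k-j)_{k}/(\nu)_{k} \to 1$. The remaining ingredient, which is the crux of the argument, is a uniform bound on $|m_{\nu}(\lambda)|$ independent of both $\lambda$ and $\nu$. For this I would prove the monotonicity of $\nu \mapsto b_{\nu}(\lambda)$: a direct computation with the formula in Proposition \ref{berezineigen} and the functional equation of the Gamma function gives
$$\frac{b_{\nu+1}(\lambda)}{b_{\nu}(\lambda)} = \frac{(\nu - \tfrac{1}{2})^{2} + \lambda^{2}}{\nu(\nu-1)} \geq 1.$$
Consequently, whenever $\nu + 2k \geq j$ one has $b_{\mu+j}(\lambda)/b_{\mu+\nu+2k}(\lambda) \leq 1$, and since the remaining $\nu$-dependent prefactors converge they are uniformly bounded. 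This yields $|m_{\nu}(\lambda)| \leq C_{0}$ for a constant $C_{0}$ independent of $\lambda$ and of all large enough $\nu$, and the same bound holds for $|m(\lambda)|$.

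Finally, since $|m_{\nu}-m|^{2}|\tilde{f}|^{2}|c|^{-2} \leq 4 C_{0}^{2} |\tilde{f}(\lambda,b)|^{2} |c(\lambda)|^{-2}$, which is integrable by Plancherel and the assumption $f \in L^{2}(\D, d\iota)$, Lebesgue's dominated convergence theorem gives the desired limit. I expect the main technical point to be the monotonicity of $b_{\nu}$ in $\nu$, which is what controls the otherwise potentially singular factor $1/b_{\mu+\nu+2k}(\lambda)$ at large $\lambda$; all other ingredients then follow from the results already established in Sections \ref{rnuber} and \ref{channelingsec}.
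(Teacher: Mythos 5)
Your proof is correct, and its skeleton is the same as the paper's: diagonalize all operators through the Helgason--Fourier transform of Proposition \ref{helginv} and the eigenvalue formula of Proposition \ref{berezineigen}, prove pointwise convergence of the resulting multipliers in $\lambda$, and finish with a uniform bound (the paper runs an explicit $\epsilon$-splitting of the $\lambda$-integral into $[-B,B]$ and its complement rather than citing dominated convergence, but that is the same argument). The genuine difference is how the dangerous factor $b_{\mu+j}(\lambda)/b_{\mu+\nu+2k}(\lambda)$ is controlled. The paper reduces to the single-transform statement $\nm{((\nu-1)B_{\nu})^{-1}B_{\nu_0}(f)-B_{\nu_0}(f)}_2\rightarrow 0$ (Proposition \ref{inverseberezin}), summing over $j$ afterwards, and its key estimate is Lemma \ref{bound1}, proved by discarding the $\lambda^2$ terms in the product formula for $|\Gamma(i\lambda+\nu-\tfrac12)|^2$; this yields a uniform constant depending on $\nu_0=\mu+j$. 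You instead prove the monotonicity $b_{\nu+1}(\lambda)/b_{\nu}(\lambda)=((\nu-\tfrac12)^2+\lambda^2)/(\nu(\nu-1))\geq 1$, which is correct (it is the functional equation of $\Gamma$ plus $(\nu-\tfrac12)^2=\nu(\nu-1)+\tfrac14$) and gives the sharper bound $b_{\mu+j}(\lambda)/b_{\mu+\nu+2k}(\lambda)\leq 1$ in one line; it also implicitly supplies the domain statement that the paper proves separately in Proposition \ref{domainok}, namely that $E^{\nu}_{\mu,k}(f)$ lies in the domain of $((\mu+\nu+2k-1)B_{\mu+\nu+2k})^{-1}$, since your bound shows $m_{\nu}\tilde{f}\in L^2$. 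Treating the full multiplier at once rather than term by term is a cosmetic difference; your monotonicity observation is the one ingredient not present in the paper and arguably streamlines the bound.
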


We recall the expansions of $E_{\mu,k}^{\nu}$ and $E_{\mu,k}$ in terms of Berezin transforms in Proposition \ref{Elimit} and Definition \ref{noncptdefEmunuk}. Then Proposition \ref{ourinverseberezin} follows from the following proposition.

\begin{proposition}
\label{inverseberezin}
For $\nu_0$ fixed and $f \in L^2(\D, d \iota)$, we have convergence 
$$ \lim_{\nu \rightarrow \infty} \nm{((\nu - 1)B_{\nu})^{-1}B_{\nu_0}(f) - B_{\nu_0}(f)}_{2} = 0.$$
\end{proposition}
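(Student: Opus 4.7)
The plan is to pass to the Helgason Fourier side, where both $B_\nu$ and $B_{\nu_0}$ are diagonalized as multiplication operators, and then apply dominated convergence. By Proposition \ref{berezineigen} the operator $(\nu-1)B_\nu$ acts as multiplication by $b_\nu(\lambda)$ on the Fourier transform $\tilde f(\lambda,b)$. Therefore $B_{\nu_0}(f)$ has Helgason transform $\frac{b_{\nu_0}(\lambda)}{\nu_0-1}\tilde f(\lambda,b)$, and $((\nu-1)B_\nu)^{-1}B_{\nu_0}(f)$ has Helgason transform $\frac{b_{\nu_0}(\lambda)}{(\nu_0-1)b_\nu(\lambda)}\tilde f(\lambda,b)$. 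By the Plancherel theorem (Proposition \ref{helginv}),
\begin{equation*}
\|((\nu-1)B_\nu)^{-1}B_{\nu_0}(f)-B_{\nu_0}(f)\|_2^2 = \frac{1}{2\pi^2}\int_{\R}\int_{S^1} |m_\nu(\lambda)|^2 |\tilde f(\lambda,b)|^2 |c(\lambda)|^{-2}\,db\,d\lambda,
\end{equation*}
where $m_\nu(\lambda)=\frac{b_{\nu_0}(\lambda)}{\nu_0-1}\bigl(\frac{1}{b_\nu(\lambda)}-1\bigr)$.

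Next I would verify two ingredients: (i) pointwise convergence $m_\nu(\lambda)\to 0$ for each $\lambda\in\R$, and (ii) a uniform bound $|m_\nu(\lambda)|\le M$ independent of $\nu$ (for $\nu$ large) and of $\lambda$. Claim (i) is immediate from Lemma \ref{boundsbnulambda}, which gives $b_\nu(\lambda)\to 1$.

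The main obstacle is (ii), because $b_\nu(\lambda)$ tends to $0$ as $|\lambda|\to\infty$ for fixed $\nu$, so we must exploit cancellation between the numerator $b_{\nu_0}(\lambda)$ and the denominator $b_\nu(\lambda)$. Using the factorization from the proof of Lemma \ref{boundsbnulambda},
\begin{equation*}
b_\nu(\lambda) = \frac{\pi}{\cosh(\pi\lambda)}\,P_\nu(\lambda)\,C_\nu,\qquad P_\nu(\lambda)=\prod_{k=1}^{\nu-1}\Bigl(1+\frac{(\pi\lambda)^2}{(k-\tfrac12)^2\pi^2}\Bigr),
\end{equation*}
with $C_\nu=\Gamma(\nu-\tfrac12)^2/(\pi\Gamma(\nu)\Gamma(\nu-1))\to 1/\pi$, I would observe that $P_\nu(\lambda)$ is monotone increasing in $\nu$ so that $P_{\nu_0}(\lambda)/P_\nu(\lambda)\le 1$ for $\nu\ge\nu_0$. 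The $\cosh(\pi\lambda)$ factors cancel, giving
\begin{equation*}
\frac{b_{\nu_0}(\lambda)}{b_\nu(\lambda)} = \frac{P_{\nu_0}(\lambda)\,C_{\nu_0}}{P_\nu(\lambda)\,C_\nu}\le \frac{C_{\nu_0}}{C_\nu},
\end{equation*}
uniformly in $\lambda$, and the right-hand side is bounded as $\nu\to\infty$. Combined with $b_{\nu_0}(\lambda)\le b_{\nu_0}(0)\le 1$, this bounds $m_\nu(\lambda)$ uniformly.

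With (i) and (ii) in hand, the integrand in the Plancherel expression is dominated by $M^2|\tilde f(\lambda,b)|^2|c(\lambda)|^{-2}$, which is integrable because $\|f\|_2^2<\infty$ implies $\int_\R\int_{S^1}|\tilde f(\lambda,b)|^2|c(\lambda)|^{-2}db\,d\lambda<\infty$ by Proposition \ref{helginv}. Dominated convergence then delivers the $L^2$ limit, completing the proof.
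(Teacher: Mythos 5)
Your proof is correct and follows essentially the same route as the paper: reduce via the Plancherel theorem to a multiplier estimate, establish the uniform bound on $b_{\nu_0}(\lambda)/b_\nu(\lambda)$ (your monotonicity argument for $P_\nu(\lambda)$ is exactly the content of the paper's Lemma \ref{bound1}), and combine with the pointwise convergence $b_\nu(\lambda)\to 1$. The only cosmetic difference is that you invoke dominated convergence where the paper performs the equivalent explicit $\epsilon$-splitting of the $\lambda$-integral into $[-B,B]$ and its complement.
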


To prove this we need some bounds on the eigenvalues of $B_{\nu}^{-1} B_{\nu_0}$. We recall the definition of $b_{\nu}$ from Proposition \ref{berezineigen}.

\begin{lemma}
\label{bound1}
Let $\nu \geq \nu_0$ integers. The term $b_{\nu}^{-1}(\lambda) b_{\nu_0}(\lambda)$ is uniformly bounded in $\nu$ and $\lambda \in \R$.
\end{lemma}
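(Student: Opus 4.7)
My plan is to obtain an explicit product formula for the ratio $b_{\nu_0}(\lambda)/b_\nu(\lambda)$, and then show each factor in the product is bounded by $1$.

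Recall from Proposition \ref{berezineigen} and the identity preceding Lemma \ref{boundsbnulambda} that
$$ b_{\nu}(\lambda) = \frac{|\Gamma(i\lambda+\nu-\tfrac{1}{2})|^2}{\Gamma(\nu)\Gamma(\nu-1)} = \frac{\pi}{\cosh(\pi\lambda)\,\Gamma(\nu)\Gamma(\nu-1)} \prod_{k=1}^{\nu-1}\left((k-\tfrac{1}{2})^2 + \lambda^2\right). $$
The $\cosh(\pi\lambda)$ and $\pi$ factors depend only on $\lambda$, so they cancel in the ratio. Using the telescoping $\Gamma(\nu)\Gamma(\nu-1)/(\Gamma(\nu_0)\Gamma(\nu_0-1)) = \prod_{k=\nu_0}^{\nu-1} k(k-1)$, I would obtain
$$ \frac{b_{\nu_0}(\lambda)}{b_\nu(\lambda)} = \prod_{k=\nu_0}^{\nu-1} \frac{k(k-1)}{(k-\tfrac{1}{2})^2 + \lambda^2}. $$

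The key observation is then the trivial algebraic identity $(k-\tfrac{1}{2})^2 = k(k-1) + \tfrac{1}{4}$, which implies $(k-\tfrac{1}{2})^2 + \lambda^2 \geq k(k-1)$ for every integer $k\geq 1$ and every $\lambda \in \mathbb{R}$. Hence every factor in the product above is at most $1$, so
$$ \frac{b_{\nu_0}(\lambda)}{b_\nu(\lambda)} \leq 1 $$
uniformly in $\nu \geq \nu_0$ and $\lambda \in \mathbb{R}$, which is even stronger than the claimed boundedness.

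There is no real obstacle here: once the product formula is written out, the inequality $(k-\tfrac{1}{2})^2 \geq k(k-1)$ does all the work. The only minor care required is the bookkeeping that converts the ratio $\Gamma(\nu)\Gamma(\nu-1)/(\Gamma(\nu_0)\Gamma(\nu_0-1))$ into a product over the same index range as $\prod_{k=\nu_0}^{\nu-1}((k-\tfrac{1}{2})^2+\lambda^2)$, which is straightforward using $\Gamma(n+1)=n\Gamma(n)$.
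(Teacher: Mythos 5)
Your proof is correct, and it reaches a sharper conclusion than the paper by a more elementary final step. Both arguments start from the same product expansion of $|\Gamma(i\lambda+\nu-\tfrac{1}{2})|^2$ and arrive at
$$b_{\nu}(\lambda)^{-1}b_{\nu_0}(\lambda)=\frac{\Gamma(\nu)\Gamma(\nu-1)}{\Gamma(\nu_0)\Gamma(\nu_0-1)}\prod_{k=\nu_0}^{\nu-1}\bigl((k-\tfrac{1}{2})^2+\lambda^2\bigr)^{-1},$$
but they diverge afterwards. The paper discards the $\lambda^2$ in each factor, reduces the remaining product to the ratio $\Gamma(\nu)\Gamma(\nu-1)/(\pi\Gamma(\nu-\tfrac{1}{2})^2)$ (times a constant depending on $\nu_0$), and then invokes the asymptotic $\Gamma(x+\alpha)\sim\Gamma(x)x^{\alpha}$ to see that this ratio converges, hence is bounded; the resulting bound is some unspecified constant. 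You instead telescope the Gamma ratio into $\prod_{k=\nu_0}^{\nu-1}k(k-1)$, pair it factor-by-factor with the denominator, and use the identity $(k-\tfrac{1}{2})^2=k(k-1)+\tfrac{1}{4}$ to see that every factor is at most $1$. This avoids any asymptotic analysis and yields the explicit bound $b_{\nu_0}(\lambda)\leq b_{\nu}(\lambda)$ for all $\nu\geq\nu_0$ and all real $\lambda$ (i.e., monotonicity of $\nu\mapsto b_{\nu}(\lambda)$), which is strictly stronger than the uniform boundedness asserted in the lemma and is consistent with the facts recorded in Lemma \ref{boundsbnulambda} ($b_{\nu}(\lambda)\leq b_{\nu}(0)\leq 1$ and $b_{\nu}(\lambda)\to 1$). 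The only bookkeeping to note is that $b_{\nu}(\lambda)>0$ for real $\lambda$, so passing between the ratio and its absolute value is harmless.
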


\begin{proof}
We calculate
\begin{flalign*}
& |b_{\nu}(\lambda)^{-1} b_{\nu_0}(\lambda) | = | \frac{\Gamma(\nu) \Gamma(\nu - 1)}{\Gamma(\nu_0) \Gamma(\nu_0 - 1)} \prod_{k= \nu_0}^{\nu - 1} ( (k- \frac{1}{2})^2 + \lambda^2)^{-1}|
\\ & \leq | \frac{\Gamma(\nu) \Gamma(\nu - 1)}{\Gamma(\nu_0) \Gamma(\nu_0 - 1)} \prod_{k= \nu_0}^{\nu - 1} (k- \frac{1}{2})^{-2}| = |\frac{ \prod_{k=1}^{\nu_0 - 1} (k-\frac{1}{2})^{2}}{\Gamma(\nu_0) \Gamma(\nu_0 - 1)} \frac{\Gamma(\nu) \Gamma(\nu -1)}{\pi \Gamma(\nu - \frac{1}{2})^2}|.
\end{flalign*}
This is bounded uniformly in $\nu$, as
$$\lim_{\nu \rightarrow \infty} \frac{\Gamma(\nu) \Gamma(\nu -1)}{\pi \Gamma(\nu - \frac{1}{2})^2} = \frac{1}{\pi}.$$
\end{proof}

Now we prove that $ B_{\mu + \nu + 2k}^{-1} E_{\mu,k}^{\nu}(f)$ is well-defined.

\begin{proposition}
\label{domainok}
For $\nu_0 \leq \nu$, $B_{\nu_0}(L^2(\D, d \iota))$ is in the domain of $((\nu-1)B_{\nu})^{-1}$.
\end{proposition}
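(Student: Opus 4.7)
The plan is to work entirely on the Fourier side provided by Proposition \ref{helginv}. Recall from Proposition \ref{berezineigen} that the operator $(\nu-1)B_\nu$ is diagonalized by the generalized eigenfunctions $e_{\lambda,b}$ with eigenvalues $b_\nu(\lambda)$. Concretely, under the Plancherel isometry $f \mapsto \tilde{f}$, the operator $(\nu-1)B_\nu$ becomes multiplication by the bounded function $b_\nu(\lambda)$ (bounded by $b_\nu(0) \leq 1$ by Lemma \ref{boundsbnulambda}). Consequently its inverse $((\nu-1)B_\nu)^{-1}$, realized on the spectral side, is multiplication by $b_\nu(\lambda)^{-1}$, an \emph{unbounded} multiplier. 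Its natural domain is
\[
\mathrm{Dom}(((\nu-1)B_\nu)^{-1}) = \Bigl\{\, g \in L^2(\D,d\iota) \,\Bigm|\, \int_{\R}\!\int_{S^1} |b_\nu(\lambda)^{-1}\tilde{g}(\lambda,b)|^2 |c(\lambda)|^{-2}\,db\,d\lambda < \infty \Bigr\}.
\]

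Next I would check that $g = B_{\nu_0}(f)$ lies in this domain whenever $f \in L^2(\D,d\iota)$. Since $(\nu_0-1)B_{\nu_0}$ is also diagonalized with eigenvalue $b_{\nu_0}(\lambda)$, we have $\widetilde{B_{\nu_0}(f)}(\lambda,b) = (\nu_0-1)^{-1} b_{\nu_0}(\lambda)\tilde{f}(\lambda,b)$. Therefore
\[
\int_{\R}\!\int_{S^1}\bigl|b_\nu(\lambda)^{-1}\widetilde{B_{\nu_0}(f)}(\lambda,b)\bigr|^2 |c(\lambda)|^{-2}\,db\,d\lambda = \frac{1}{(\nu_0-1)^2}\int_{\R}\!\int_{S^1} \bigl|b_\nu(\lambda)^{-1} b_{\nu_0}(\lambda)\bigr|^2 |\tilde{f}(\lambda,b)|^2 |c(\lambda)|^{-2}\,db\,d\lambda.
\]
By Lemma \ref{bound1}, the factor $b_\nu(\lambda)^{-1}b_{\nu_0}(\lambda)$ is uniformly bounded in $\lambda \in \R$ (for $\nu \geq \nu_0$ fixed), so this integral is finite by the Plancherel identity applied to $f$, being bounded by a constant multiple of $\|f\|_2^2$. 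This shows $B_{\nu_0}(f) \in \mathrm{Dom}(((\nu-1)B_\nu)^{-1})$ and finishes the proof.

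No real obstacle arises here: the point is simply that the potential blow-up of $b_\nu(\lambda)^{-1}$ at large $|\lambda|$ is exactly cancelled by the decay of $b_{\nu_0}(\lambda)$, which is the content of Lemma \ref{bound1}. This boundedness will in fact be used again in the proof of Proposition \ref{inverseberezin} to obtain the norm convergence, by combining it with the pointwise limit $b_\nu(\lambda)^{-1} b_{\nu_0}(\lambda) \to b_{\nu_0}(\lambda)$ and dominated convergence on the Plancherel side.
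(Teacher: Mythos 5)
Your proposal is correct and follows essentially the same route as the paper: both pass to the Plancherel side, use Lemma \ref{bound1} to bound the multiplier $b_{\nu}(\lambda)^{-1}b_{\nu_0}(\lambda)$, and conclude membership in the domain via the Plancherel identity (the paper phrases this by explicitly constructing $F$ with $\tilde{F}=b_{\nu}^{-1}b_{\nu_0}\tilde{f}$ and checking $(\nu-1)B_{\nu}(F)=(\nu_0-1)B_{\nu_0}(f)$, which is the same computation you perform). No gaps.
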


\begin{proof}
Let $f \in L^2(\D, d \iota)$. Then by Proposition \ref{helginv}
$$\tilde{f} \in L^2(\R^+, |c(\lambda)|^{-2}d \lambda).$$
Now we see by Lemma \ref{bound1} that $b_{\nu}^{-1}(\lambda) b_{\nu_0}(\lambda)$ is bounded, hence $b_{\nu}^{-1} b_{\nu_0} \tilde{f}$ is in $L^2(\R^+ \times S^1, |c(\lambda)|^{-2}d \lambda)$ and by the Plancherel theorem, Proposition \ref{helginv}, we can define $F \in L^2(\D, d \iota)$ such that $F^{\sim} = b_{\nu}^{-1} b_{\nu_0} \tilde{f}$. Applying $(\nu-1) B_{\nu}$ we see
$$(\nu-1) B_{\nu}(F) = (\nu_0 - 1) B_{\nu_0}(f).$$
Hence $B_{\nu_0}(L^2(\D, d \iota))$ is in the domain of $((\nu-1)B_{\nu})^{-1}$.
\end{proof}

Next we prove Proposition \ref{inverseberezin}, which makes sense now.

\begin{proof}
We study
$$\lim_{\nu \rightarrow \infty} \nm{((\nu - 1)B_{\nu})^{-1} B_{\nu_0}(f) - B_{\nu_0}(f)}_2.$$
We notice that for $f \in L^2(\D, d \iota)$ 
$$(\nu - 1)B_{\nu}(f)^{\sim}(\lambda,b) = b_{\nu}(\lambda) \tilde{f}(\lambda,b),$$
and for $\nu \geq \nu_0$
$$((\nu - 1)B_{\nu})^{-1} B_{\nu_0}(f)^{\sim}(\lambda,b) = \frac{b_{\nu}^{-1}(\lambda) b_{\nu_0}}{\nu_0 - 1}(\lambda) \tilde{f}(\lambda,b).$$ 
Thus we see by the Plancherel formula in Proposition \ref{helginv}

\begin{flalign*}
& \nm{((\nu - 1)B_{\nu})^{-1} B_{\nu_0}(f) - B_{\nu_0}(f)}_2^2
\\ & = \frac{1}{2 \pi^2} \int_{\R} \int_{S^1} |\tilde{f}(\lambda,b)|^2 \left\lvert \frac{b_{\nu}(\lambda)^{-1} b_{\nu_0}(\lambda) - b_{\nu_0}(\lambda)}{\nu_0 - 1} \right\rvert^2 |c(\lambda)|^{-2} db d \lambda.
\end{flalign*}
If $B \in \R_{>0}$ and $\epsilon > 0$, then by Lemma \ref{boundsbnulambda} there exists an $N$ such that if $| \lambda| < B$ and $\nu \geq N$, then

$$|b_{\nu}(\lambda) - 1| < \epsilon.$$
Now we more specifically look at the term
$$|b_{\nu}(\lambda)^{-1} b_{\nu_0}(\lambda) - b_{\nu_0}(\lambda)|.$$
We want to use this to bound the term

\begin{flalign*}
& \nm{((\nu - 1)B_{\nu})^{-1} B_{\nu_0}(f) - B_{\nu_0}(f)}_2^2
\\ & = \frac{1}{2 \pi^2} \int_{\R} \int_{S^1} |\tilde{f}(\lambda,b)|^2 |b_{\nu}(\lambda)^{-1} b_{\nu_0}(\lambda) - b_{\nu_0}(\lambda)|^2 |c(\lambda)|^{-2} db d \lambda.
\end{flalign*}
We first note that by Lemma \ref{bound1} $\left\lvert \frac{b_{\nu}(\lambda)^{-1} b_{\nu_0}(\lambda) - b_{\nu_0}(\lambda)}{\nu_0 - 1} \right\rvert$ is uniformly bounded in $\nu$ and $\lambda$, say by $R > 0$. Next we note that by the Plancherel formula in Proposition \ref{helginv}

$$\frac{1}{2 \pi^2} \int_{\R} \int_{S^1} |\tilde{f}(\lambda, b)|^2 | c( \lambda)|^{-2} db d \lambda = \nm{\tilde{f}}_2^2 = \nm{f}_2^2 < \infty.$$
Now let $\epsilon > 0$. Choose $B > 0$ such that

$$ \frac{1}{2 \pi^2} \int_{\R \backslash [-B,B]} \int_{S^1} |\tilde{f}(\lambda, b)|^2 |c(\lambda)|^{-2} db d \lambda < \frac{\epsilon}{R^2}.$$
Now choose $N$ such that for $\nu \geq N$ and $|\lambda| \leq B$

$$ \left\lvert \frac{b_{\nu}(\lambda)^{-1} b_{\nu_0}(\lambda) - b_{\nu_0}(\lambda)}{\nu_0 - 1} \right\rvert^2 < \epsilon.$$
Then for $\nu \geq N$

\begin{flalign*}
& \nm{((\nu - 1)B_{\nu})^{-1} B_{\nu_0} (f) - B_{\nu_0}(f)}_2^2
\\ & = \frac{1}{2 \pi^2} \int_{\R} \int_{S^1} |\tilde{f}(\lambda,b)|^2 \left\lvert \frac{b_{\nu}(\lambda)^{-1} b_{\nu_0}(\lambda) - b_{\nu_0}(\lambda)}{\nu_0 - 1} \right\rvert^2 |c(\lambda)|^{-2} db d \lambda
\\ & = \frac{1}{2 \pi^2} \int_{[-B,B]} \int_{S^1} |\tilde{f}(\lambda,b)|^2 \left\lvert \frac{b_{\nu}(\lambda)^{-1} b_{\nu_0}(\lambda) - b_{\nu_0}(\lambda)}{\nu_0 - 1} \right\rvert^2 |c(\lambda)|^{-2} db d \lambda
\\ & + \frac{1}{2 \pi^2} \int_{\R \backslash [-B,B]} \int_{S^1} |\tilde{f}(\lambda,b)|^2 \left\lvert \frac{b_{\nu}(\lambda)^{-1} b_{\nu_0}(\lambda) - b_{\nu_0}(\lambda)}{\nu_0 - 1} \right\rvert^22 |c(\lambda)|^{-2} db d \lambda
\\ & \leq \frac{\epsilon}{2 \pi^2} \int_{[-B,B]} \int_{S^1} |\tilde{f}(\lambda,b)|^2 |c(\lambda)|^{-2} db d \lambda
\\ & + \frac{R^2}{2 \pi^2} \int_{\R \backslash [-B,B]} \int_{S^1} |\tilde{f}(\lambda,b)|^2  |c(\lambda)|^{-2} db d \lambda \leq \epsilon \nm{f}_2^2 + \frac{\epsilon}{2 \pi^2}.
\end{flalign*}
This completes the proof.
\end{proof}

\section{Trace calculation}
\label{tracecalc}

Now we want to draw conclusions about the limit of the trace of the functional calculus

$$ \Tr(\mathcal{T}_{\mu,k}^{\nu}(R_{\mu}(f))^n) = \Tr((R_{\mu + \nu + 2k}^* (B_{\mu + \nu + 2k})^{-1} E_{\mu,k}^{\nu}(f))^n). $$
We know that for $f \in L^2(\D, d \iota)$
$$ \lim_{\nu \rightarrow \infty} \nm{( (\mu + \nu + 2k -1) B_{\mu + \nu + 2k})^{-1} E_{\mu,k}^{\nu}(f) - E_{\mu,k}(f)}_2 = 0$$
by Proposition \ref{ourinverseberezin}. We need an additional bound.

\begin{lemma}
\label{boundcptfctns}
For $f \in C_c^{\infty}(\D)$ and $\nu_0$ fixed there is some $R > 0$ such that
$$\nm{((\nu - 1)B_{\nu})^{-1} (\nu_0-1) B_{\nu_0}(f)} \leq R$$
for all $\nu \geq \nu_0$.
\end{lemma}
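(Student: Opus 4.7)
The plan is to pass to the Helgason--Fourier side, where the composite operator becomes a bounded Fourier multiplier, and then use the inversion formula to obtain a uniform pointwise bound, exploiting that $f \in C_c^{\infty}(\D)$ makes $\tilde{f}$ decay rapidly in $\lambda$. I interpret the unlabeled norm as $\nm{\cdot}_{\infty}$; the $\nm{\cdot}_2$ case is easier and follows directly from Plancherel together with Lemma~\ref{bound1}.

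By Proposition~\ref{helginv} and Proposition~\ref{berezineigen}, the operator $((\nu-1)B_{\nu})^{-1}(\nu_0-1)B_{\nu_0}$ acts on the Helgason--Fourier side as multiplication by $b_{\nu}(\lambda)^{-1} b_{\nu_0}(\lambda)$, which by Lemma~\ref{bound1} is bounded uniformly in $\nu \geq \nu_0$ and $\lambda \in \R$ by some constant $M$. Setting $F_{\nu} \coloneqq ((\nu-1)B_{\nu})^{-1}(\nu_0-1)B_{\nu_0}(f)$, I would apply the inversion formula of Proposition~\ref{helginv} to write
$$F_{\nu}(z) = \frac{1}{2\pi^2}\int_{\R}\int_{S^1} b_{\nu}(-\lambda)^{-1}b_{\nu_0}(-\lambda)\tilde{f}(-\lambda,b)e_{\lambda,b}(z)|c(\lambda)|^{-2}\,db\,d\lambda,$$
and estimate this pointwise in $z$.

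The delicate factor is $|e_{\lambda,b}(z)|^2 = (1-|z|^2)/|z-b|^2$, which is, up to normalization, the Poisson kernel; hence $\int_{S^1}|e_{\lambda,b}(z)|^2\,db$ equals a constant $C_0$ independent of both $z$ and $\lambda$. Cauchy--Schwarz in $b$ then yields
$$\int_{S^1}|\tilde{f}(-\lambda,b)||e_{\lambda,b}(z)|\,db \leq \sqrt{C_0}\Bigl(\int_{S^1}|\tilde{f}(-\lambda,b)|^2\,db\Bigr)^{1/2}.$$
Because $f \in C_c^{\infty}(\D)$, Proposition~\ref{helginv} guarantees that $\tilde{f}$ has uniform exponential type, so the right-hand side decays faster than any polynomial in $|\lambda|$. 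Since $|c(\lambda)|^{-2} = (\pi\lambda/2)\tanh(\pi\lambda/2)$ grows only linearly, the outer $\lambda$-integral converges, producing a finite constant $R$ depending on $f$ and $\nu_0$ but independent of $\nu \geq \nu_0$.

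The main obstacle is the unboundedness of $|e_{\lambda,b}(z)|$ as $z$ approaches $\partial\D$, which rules out a naive pointwise estimate of the integrand. The Poisson-kernel identity recasts this difficulty as an $L^2$-statement in $b$, at the price of requiring rapid decay of $\tilde{f}$ in $\lambda$---precisely what the hypothesis $f \in C_c^{\infty}(\D)$ provides via the uniform exponential type characterization in Proposition~\ref{helginv}.
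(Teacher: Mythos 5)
Your proof is correct, and it takes a genuinely different (and leaner) route than the paper at the key technical step. Both arguments start the same way: pass to the Helgason--Fourier side, where the operator is multiplication by $b_{\nu}(\lambda)^{-1}b_{\nu_0}(\lambda)$, uniformly bounded by Lemma \ref{bound1}, and then control the inversion integral pointwise. The paper handles the unboundedness of $e_{\lambda,b}(z)$ by expanding $\tilde{f}(\lambda,b)$ in a Fourier series in $b$, introducing the functions $\phi_{n,\lambda}(z)=\int_{S^1}e_{\lambda,b}(z)b^n\,db$ (bounded by the spherical function $\phi_0\leq 1$), and then proving decay of the coefficients $\tilde{f}(\lambda,n)$ in \emph{both} variables --- $O(n^{-2})$ by differentiating twice in the circle variable, and $O((1+|\lambda|)^{-N})$ via a Euclidean Paley--Wiener argument. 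You bypass the $n$-expansion entirely: the identity $\int_{S^1}|e_{\lambda,b}(z)|^2\,db=C_0$ (the Poisson kernel integrating to a constant) plus Cauchy--Schwarz in $b$ reduces everything to rapid decay of $\|\tilde{f}(-\lambda,\cdot)\|_{L^2(S^1)}$ in $\lambda$ alone, which you read off directly from the uniform exponential type statement in Proposition \ref{helginv} rather than rederiving it. Your version needs less machinery and fewer estimates; the paper's version produces along the way the expansion of $f$ in the $\phi_{n,\lambda}$, which is of independent interest but not needed for this lemma. (In fact your Cauchy--Schwarz step recovers the same bound the paper uses, since $\int_{S^1}|e_{\lambda,b}(z)|\,db=\phi_0(z)\leq C_0^{1/2}$.)

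One small point to make explicit: the inversion formula in Proposition \ref{helginv} is stated for $C_c^{\infty}(\D)$, and $F_{\nu}$ is only an $L^2$ function, so writing $F_{\nu}(z)$ as the inversion integral requires the standard extension: for $h\in L^2(\D,d\iota)$ with $\tilde{h}\in L^1\cap L^2(\R^+\times S^1,|c(\lambda)|^{-2}db\,d\lambda)$ the inversion formula holds almost everywhere, so your pointwise bound is a bound on $\|F_{\nu}\|_{\infty}$ as an essential supremum. The paper's proof needs, and explicitly records, the same remark in its final sentence; your hypothesis that $\tilde{f}$ decays rapidly in $\lambda$ and the uniform bound on the multiplier guarantee the required integrability, so this is a one-line patch rather than a gap.
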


\begin{proof}
First we let
$$ \phi_{n, \lambda}(z) = \int_{S^1} e_{\lambda,b}(z) b^n d b.$$
We see
$$ | \phi_{n, \lambda}(z)| \leq \int_{S^1} e_{0,b}(z) dk = \phi_{0}(z) \leq 1,$$
so this is bounded. Note $\phi_0$ is the Harish-Chandra $\Xi$-function. Using Fourier expansion on the circle we write
$$\tilde{f}(\lambda,b) = \sum_{n= - \infty}^{\infty} \tilde{f}(\lambda,n) b^n$$
for $\lambda \in \R$ and $b \in S^1$. Thus we see by Proposition \ref{helginv}
\begin{flalign*}
& f(z) =  \int_{\R} \int_{S^1} \tilde{f}(- \lambda, b) e_{\lambda,b}(z) |c(\lambda)|^{-2} db d \lambda
\\ & = \int_{\R} \int_{S^1} \sum_{n = - \infty}^{\infty} \tilde{f}(-\lambda,n) b^n e_{\lambda,b}(z) |c(\lambda)|^{-2} db d \lambda
\\ & = \int_{\R} \sum_{n = - \infty}^{\infty} \int_{S^1} \tilde{f}(-\lambda,n) b^n e_{\lambda,b}(z) |c(\lambda)|^{-2} db d \lambda
\\ & = \int_{\R} \sum_{n = - \infty}^{\infty}\tilde{f}(-\lambda,n) \phi_{n,\lambda}(g) d \mu( \lambda).
\end{flalign*}
Note that if
$$\tilde{g}(z,e^{is}) \coloneqq \frac{d^2}{dt^2}|_{t=s} \tilde{f}(\lambda, e^{it})$$
then for the Fourier transform
$$ \tilde{g}(\lambda,n) = -n^2 \tilde{f}(\lambda,n).$$
Now we define
$$ F(\mu, b) = \int_{\R} \tilde{f}(\lambda, b) e^{i \mu \lambda} d \lambda.$$
Then by the Euclidean Paley-Wiener Theorem this is compactly supported and
$$ \tilde{f}(\lambda,b) = \int_{\R} F(\mu,b) e^{-i \mu \lambda} d \mu.$$
Thus $G(\mu,e^{is}) = \frac{d^2}{dt^2}|_{t=s} F(\mu,e^{it})$ is still compactly supported and
$$\tilde{g}(\lambda,b) = \int_{\R} G(\mu,b) e^{-i \mu \lambda} d \mu$$
has the property
$$(- i \lambda)^n \tilde{g}(\lambda,b) =  \int_{\R} (\frac{d^n}{d \mu^n}|_{\mu = \nu} G(\mu,b)) e^{- i \mu \lambda } d \nu.$$
This implies that for each $n \in \N$ there is some constant $C_n$ such that for all $\mu \in \R$ and $b \in S^1$
$$|\lambda^n \tilde{g}(\lambda,b) | \leq C_n,$$
so for each $N \in \N$
$$|\tilde{g}(\lambda,n)| = |\int_{S^1} \tilde{g}(\lambda, b) b^{-n} dk| \leq \int_{S^1} C (1 + |\lambda|)^{-N} d k = C_N' (1 + |\lambda|)^{-N},$$
for some new constant $C'_N$. This implies
$$|\tilde{f}(\lambda,n)| \leq C_N' \frac{1}{n^2} (1 + |\lambda|)^{-N}.$$
Using Lemma \ref{bound1} we obtain
\begin{flalign*}
& | \int_{\R} \int_{S^1} b_{\nu}(\lambda)^{-1} b_{\nu_0}(\lambda)  \tilde{f}(\lambda, b) e_{\lambda,b}(z) |c(\lambda)|^{-2} db d \lambda |
\\ & = | \int_{\R} \sum_{n = - \infty}^{\infty} b_{\nu}(\lambda)^{-1} b_{\nu_0}(\lambda) \tilde{f}(\lambda,n) \phi_{n,\lambda}(z) |c(\lambda)|^{-2} d \lambda|
\\ & \leq \int_{\R} \sum_{n = - \infty}^{\infty} | b_{\nu}(\lambda)^{-1} b_{\nu_0} \tilde{f}(\lambda,n) \phi_{n,\lambda}(z) |c(\lambda)|^{-2}|d \lambda
\\ & \leq R \int_{\R} \sum_{n = - \infty}^{\infty} C_N' \frac{1}{n^2} (1 + |\lambda|)^{-N}  |c(\lambda)|^{-2} d \lambda < B,
\end{flalign*}
for some constants $R$, $B$. Together with the fact that for some function $h \in L^2(\D, d \iota)$ such that $\tilde{h} \in L^1(\R^+ \times S^1, |c(\lambda)|^{-2} db d \lambda ) \cap L^2(\R^+ \times S^1, |c(\lambda)|^{-2} db d \lambda )$ we have
$$h(z) = \int_{\R} \int_{S^1} \tilde{h}(- \lambda, b) e_{\lambda,b}(z) |c(\lambda)|^{-2} db d \lambda$$
almost everywhere, this proves our lemma.
\end{proof}

Now we can prove the following for even integers.

\begin{lemma}
\label{convx2n}
Let $f \in L^2(\D, d \iota) \cap L^{\infty}(\D, d \iota)$ and $\{f_{\nu}\}_{\nu}$ a sequence such that
$$ \lim_{\nu \rightarrow \infty} \nm{f - f_{\nu}}_2 = 0$$
and there is some $B$ such that for all $\nu$
$$ \nm{f_{\nu}}_{\infty} \leq B.$$
Then for any integer $n$
$$ \lim_{\nu \rightarrow \infty} \frac{1}{\nu} \Tr(( (\nu - 1)R_{\nu}^*(f_{\nu}))^{2n}) = \int_{\D} f(z)^{2n} d \iota(z).$$
\end{lemma}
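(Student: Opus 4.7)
The plan is to compare $\tfrac{1}{\nu}\Tr(((\nu-1)R_{\nu}^*(f_{\nu}))^{2n})$ with the corresponding quantity for the fixed limit $f$ and then invoke Lemma \ref{tracexn}. First I would dispose of a harmless measurability point: after passing to a subsequence $f_{\nu} \to f$ almost everywhere, and since $\nm{f_{\nu}}_{\infty} \leq B$ this forces $\nm{f}_{\infty} \leq B$, so in particular $f \in L^{2}(\D, d\iota) \cap L^{\infty}(\D, d\iota)$ and Lemma \ref{tracexn} applies to $f$. Writing $S = (\nu-1) R_{\nu}^*(f_{\nu})$ and $T = (\nu-1) R_{\nu}^*(f)$, I split
$$\frac{1}{\nu}\Tr(S^{2n}) - \int_{\D} f(z)^{2n}\, d\iota(z) = \frac{1}{\nu}\bigl[\Tr(S^{2n}) - \Tr(T^{2n})\bigr] + \left[\frac{1}{\nu}\Tr(T^{2n}) - \int_{\D} f(z)^{2n}\, d\iota(z)\right].$$
The second bracket tends to zero by Lemma \ref{tracexn}, combined with the trivial observation that $\frac{\nu-1}{\nu}\to 1$.

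For the first bracket I would use the Hilbert--Schmidt inner product expression
$$\Tr(S^{2n}) - \Tr(T^{2n}) = \langle S^{n} - T^{n}, (S^{*})^{n}\rangle_{2} + \langle T^{n}, (S^{*})^{n} - (T^{*})^{n}\rangle_{2},$$
and bound each term by Cauchy--Schwarz in $\|\cdot\|_{2}$. Using $\nm{T_{g}} \leq \nm{g}_{\infty}$ (multiplication operator bound) and $\nm{T_{g}}_{2} \leq \sqrt{\nu-1}\, \nm{g}_{2}$ from Proposition \ref{Rmubdd}, together with the telescoping identity $S^{n} - T^{n} = \sum_{j=0}^{n-1} S^{j}(S-T)T^{n-1-j}$, I obtain
$$\nm{S^{n} - T^{n}}_{2} \leq n B^{n-1} \nm{S-T}_{2} \leq n B^{n-1} \sqrt{\nu-1}\,\nm{f_{\nu}-f}_{2},$$
and analogously $\nm{(S^{*})^{n}}_{2} \leq B^{n-1}\sqrt{\nu-1}\,\nm{f_{\nu}}_{2}$, $\nm{T^{n}}_{2} \leq B^{n-1}\sqrt{\nu-1}\,\nm{f}_{2}$, and the same bound with $\bar f, \bar f_{\nu}$ in place of $f, f_{\nu}$ for the adjoint differences.

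Multiplying the two Hilbert--Schmidt bounds in each term, the factor $\sqrt{\nu-1}\cdot \sqrt{\nu-1} = \nu-1$ appears, yielding
$$|\Tr(S^{2n}) - \Tr(T^{2n})| \leq C(n, B, \nm{f}_{2} + \sup_{\nu}\nm{f_{\nu}}_{2})\,\nu\,\nm{f_{\nu}-f}_{2},$$
so dividing by $\nu$ and invoking $\nm{f_{\nu}-f}_{2}\to 0$ kills the first bracket. The only real obstacle is bookkeeping: one has to track carefully that the $\sqrt{\nu-1}$ factors from the two Hilbert--Schmidt norms combine to exactly cancel the $\frac{1}{\nu}$ prefactor, leaving the small quantity $\nm{f_{\nu}-f}_{2}$. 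Everything else reduces to routine application of the submultiplicativity of operator norms and Proposition \ref{Rmubdd}.
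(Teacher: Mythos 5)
Your proof is correct and follows essentially the same route as the paper's: both reduce to Lemma \ref{tracexn} applied to the fixed function $f$ and control $\frac{1}{\nu}\left|\Tr(S^{2n})-\Tr(T^{2n})\right|$ via Hilbert--Schmidt estimates, using Proposition \ref{Rmubdd} for the $\sqrt{\nu-1}$ scaling and $\nm{T_g}\leq\nm{g}_{\infty}$ for the operator norms (the paper runs an induction on $n$ where you telescope, which is the same computation, and your explicit polarization identity is what the paper summarizes as ``properties of the Hilbert--Schmidt norm''). Your opening subsequence argument is harmless but not needed, since $f\in L^{\infty}(\D,d\iota)$ is already part of the hypothesis.
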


\begin{proof}
Using the notation from Lemma \ref{Toeplitzrmu} we claim
$$ \lim_{\nu \rightarrow \infty} \frac{1}{\sqrt{\nu}} \nm{T_{f_{\nu}}^{n} - T_{f}^{n}}_{2} = 0.$$
We prove this by induction. The case $n=0$ is trivial and the case $n=1$ follows from Proposition \ref{Rmubdd}. For the induction step we see
\begin{flalign*}
& \frac{1}{\sqrt{\nu}} \nm{ T_{f_{\nu}}^{n+1} - T_{f}^{n+1}}_{2}
\\ & \leq \frac{1}{\sqrt{\nu}} \nm{ (T_{f_{\nu}} - T_{f} ) T_{f_{\nu}}^{n}}_{2} + \frac{1}{\sqrt{\nu}} \nm{T_{f} (T_{f_{\nu}}^{n} - T_{f}^{n})}_{2}
\\ & \leq \frac{1}{\sqrt{\nu}} \nm{ T_{f_{\nu}} - T_{f}}_{2} \nm{T_{f_{\nu}}^{n}} + \frac{1}{\sqrt{\nu}} \nm{T_{f}} \cdot \nm{T_{f_{\nu}}^{n} - T_{f}^{n}}_{2}
\\ & \leq \frac{B^n}{\sqrt{\nu}} \nm{ T_{f_{\nu}} - T_{f}}_{2} + \frac{1}{\sqrt{\nu}} \nm{f}_{\infty} \cdot \nm{T_{f_{\nu}}^{n} - T_{f}^{n}}_{2}.
\end{flalign*}
This proves our claim. This also implies that
$$\lim_{\nu \rightarrow \infty} \frac{1}{\nu} \Tr(T_{f_{\nu}}^{2n} - T_{f}^{2n}) = 0$$
for any integer $n$, using properties of the Hilbert-Schmidt norm. Furthermore by Lemma \ref{tracexn} 
$$\lim_{\nu \rightarrow \infty} \frac{1}{\nu} \Tr(T_{f}^{2n}) = \int_{\D} f(z)^{2n} d \iota (z),$$
and the result follows writing
\begin{flalign*}
& \lim_{\nu \rightarrow \infty} |\frac{1}{\nu} \Tr(( (\nu - 1)R_{\nu}^*(f_{\nu}))^{2n}) - \int_{\D} f(z)^{2n} d \iota(z)|
\\ & \leq \lim_{\nu \rightarrow \infty} |\frac{1}{\nu} \Tr(( (\nu - 1)R_{\nu}^*(f_{\nu}))^{2n}) - \Tr(( (\nu - 1)R_{\nu}^*(f))^{2n})|
\\ & + \lim_{\nu \rightarrow \infty} |\Tr(( (\nu - 1)R_{\nu}^*(f))^{2n}) - \int_{\D} f(z)^{2n} d \iota(z)| = 0
\end{flalign*}
\end{proof}

We get the following theorem.

\begin{theorem}
\label{funccalcx2n}
For any $f \in C_c^{\infty}(\D)$ and integer $n$
$$ \lim_{\nu \rightarrow \infty} \frac{1}{\nu} \Tr(\mathcal{T}_{\mu,k}^{\nu}(R_{\mu}^*(f))^{2n}) =  \int_{\D} E_{\mu,k}(f)^{2n} d \iota (z).$$
\end{theorem}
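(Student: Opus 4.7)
The idea is to rewrite $\mathcal{T}_{\mu,k}^{\nu}(R_{\mu}^*(f))$ as a rescaled Toeplitz operator $(\tilde\nu-1)R_{\tilde\nu}^*(f_{\tilde\nu})$ of an explicit function $f_{\tilde\nu}$ on $\D$, and then invoke Lemma \ref{convx2n} with $\tilde\nu$ in place of $\nu$. Set $\tilde\nu := \mu+\nu+2k$. From the factorisation displayed just before Section \ref{tracfor} we have
\begin{equation*}
\mathcal{T}_{\mu,k}^{\nu}(R_{\mu}^*(f)) \;=\; R_{\tilde\nu}^* B_{\tilde\nu}^{-1} E_{\mu,k}^{\nu}(f) \;=\; (\tilde\nu-1)\, R_{\tilde\nu}^*\!\bigl(f_{\tilde\nu}\bigr), \qquad f_{\tilde\nu} \;:=\; \bigl((\tilde\nu-1)B_{\tilde\nu}\bigr)^{-1} E_{\mu,k}^{\nu}(f).
\end{equation*}
Proposition \ref{domainok} guarantees that $f_{\tilde\nu}$ is a well-defined element of $L^2(\D,d\iota)$, so everything is meaningful.

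To apply Lemma \ref{convx2n} I need to check its two hypotheses with target $E_{\mu,k}(f)$. The $L^2$-convergence $\nm{f_{\tilde\nu} - E_{\mu,k}(f)}_2 \to 0$ is exactly Proposition \ref{ourinverseberezin}; note that $E_{\mu,k}(f) \in L^2 \cap L^\infty$ since $f \in C_c^\infty(\D)$ and each Berezin transform $B_{\mu+j}$ is continuous on both $L^2$ and $L^\infty$. For the uniform $L^\infty$ bound I would expand $E_{\mu,k}^{\nu}(f)$ via Proposition \ref{channelssumberezin} as a finite linear combination $\sum_{j=0}^k c_j^{\nu} B_{\mu+j}(f)$ whose coefficients $c_j^{\nu}$ stay bounded in $\nu$ (Proposition \ref{Elimit}). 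Applying $((\tilde\nu-1)B_{\tilde\nu})^{-1}$ termwise and using Lemma \ref{boundcptfctns} with $\nu_0 = \mu+j$ for each $j \in \{0,\dots,k\}$ produces a finite sum of terms each uniformly bounded in $L^\infty$ as $\nu\to\infty$. Hence $\nm{f_{\tilde\nu}}_\infty \leq B$ for some $B$ independent of $\nu$.

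With both hypotheses verified, Lemma \ref{convx2n} yields
\begin{equation*}
\lim_{\nu\to\infty} \frac{1}{\tilde\nu}\,\Tr\bigl((\tilde\nu-1)R_{\tilde\nu}^*(f_{\tilde\nu})\bigr)^{2n} \;=\; \int_{\D} E_{\mu,k}(f)(z)^{2n}\, d\iota(z).
\end{equation*}
Substituting the identity from the first paragraph and noting that $\tilde\nu/\nu = 1 + (\mu+2k)/\nu \to 1$, we may replace $\tfrac{1}{\tilde\nu}$ by $\tfrac{1}{\nu}$ in the limit, giving the theorem. The only place where real work is hidden is the uniform $L^\infty$ bound on $f_{\tilde\nu}$; this is the main obstacle, but it has already been reduced to Lemma \ref{boundcptfctns}, whose Paley-Wiener-style decay estimate handles each Berezin summand individually. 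The rest of the argument is essentially bookkeeping of constants.
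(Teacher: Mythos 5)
Your proposal is correct and follows essentially the same route as the paper: the paper likewise sets $g_{\nu} = ((\mu+\nu+2k-1)B_{\mu+\nu+2k})^{-1}E_{\mu,k}^{\nu}(f)$, obtains the uniform $L^{\infty}$ bound from the Berezin-transform expansion of $E_{\mu,k}^{\nu}$ (Proposition \ref{Elimit} and its proof) combined with Lemma \ref{boundcptfctns}, gets the $L^2$-convergence from Proposition \ref{ourinverseberezin}, and concludes via Lemma \ref{convx2n}. Your termwise application of Lemma \ref{boundcptfctns} with $\nu_0=\mu+j$ and the remark that $\tilde\nu/\nu\to 1$ simply make explicit two steps the paper leaves implicit.
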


\begin{proof}
We let
$$g_{\nu} \coloneqq ((\mu + \nu + 2k - 1)B_{\mu + \nu + 2k})^{-1} E_{\mu,k}^{\nu}(f).$$
By Proposition \ref{Elimit}, its proof and Lemma \ref{boundcptfctns} there is some constant $R$ such that if $\nu$ big
$$\nm{((\mu + \nu + 2k - 1)B_{\mu + \nu + 2k})^{-1} E_{\mu,k}^{\nu}(f)}_{\infty} < R.$$
Furthermore, by Proposition \ref{ourinverseberezin}
$$ \lim_{\nu \rightarrow \infty} \nm{g_{\nu} - E_{\mu,k}(f)}_2 = 0,$$
and $E_{\mu,k}(f) \in L^2(\D, d \iota) \cap L^{\infty}(\D, d \iota)$ by Definition \ref{berezindef} and the discussion following it. Hence we can use Lemma \ref{convx2n}, which proves our theorem.
\end{proof}

We get a corollary.

\begin{corollary}
\label{Husimiremark2}
Recalling Proposition \ref{Husimiremark1}, for any $A \in S^1(\Hc_{\mu})$ and integer $n$ we get
$$\lim_{\nu \rightarrow \infty} \frac{1}{\nu} \Tr(\mathcal{T}_{\mu,k}^{\nu}(A)^{2n}) =  \int_{\D} H_{\mu}^k(A)(z)^{2n} d \iota (z).$$
\end{corollary}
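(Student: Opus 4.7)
The plan is to combine Theorem~\ref{funccalcx2n} and Proposition~\ref{Husimiremark1} with an $S^1$-density argument. Set $\mathcal{R} \coloneqq R_{\mu}^*(C_c^\infty(\D)) \subset S^1(\Hc_\mu)$. For $A = R_\mu^*(f) \in \mathcal{R}$, Theorem~\ref{funccalcx2n} expresses the limit as $\int_\D E_{\mu,k}(f)^{2n} d\iota$, which by Proposition~\ref{Husimiremark1} equals $\int_\D H_{\mu}^k(A)^{2n} d\iota$; so the corollary holds on $\mathcal{R}$, and the task is to extend by approximation to arbitrary $A \in S^1(\Hc_\mu)$.

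To show $\mathcal{R}$ is dense in $S^1(\Hc_\mu)$, I would use the duality $(S^1(\Hc_\mu))^* = B(\Hc_\mu)$ via the trace pairing. A functional $B \in B(\Hc_\mu)$ annihilating $\mathcal{R}$ satisfies
$$\Tr(R_\mu^*(f) B) = \int_\D f(z)\, R_\mu(B^*)(z)\, d\iota(z) = 0$$
for every $f \in C_c^\infty(\D)$, since $R_\mu^*$ is the Banach-space adjoint of $R_\mu$ (Remark~\ref{Rnustargeneral}). As $R_\mu(B^*)$ is continuous, this forces $R_\mu(B^*) \equiv 0$ and then $B = 0$ by the injectivity from Proposition~\ref{factsRmu}.

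Given $A \in S^1(\Hc_\mu)$, choose $A_m \in \mathcal{R}$ with $\|A_m - A\|_1 \to 0$; since $\|\cdot\|_{2n} \leq \|\cdot\|_1$ on Schatten classes, also $A_m \to A$ in $S^{2n}$. The telescoping expansion
$$T^{2n} - S^{2n} = \sum_{j=0}^{2n-1} T^j (T-S) S^{2n-1-j}$$
combined with Schatten H\"older gives $|\Tr(T^{2n} - S^{2n})| \leq 2n \max(\|T\|_{2n}, \|S\|_{2n})^{2n-1} \|T - S\|_{2n}$. Substituting $T = \mathcal{T}_{\mu,k}^{\nu}(A)$ and $S = \mathcal{T}_{\mu,k}^{\nu}(A_m)$ and using the bound $\|\mathcal{T}_{\mu,k}^{\nu}\|_{2n\to 2n} \leq (2(\mu+\nu+2k-1)/(\mu-1))^{1/(2n)}$ from Proposition~\ref{qtchannel}, whose $2n$-th power is linear in $\nu$, the factors of $\nu^{1/(2n)}$ raised to total power $2n$ exactly cancel the $1/\nu$ normalisation and yield
$$\Bigl|\tfrac{1}{\nu}\Tr(\mathcal{T}_{\mu,k}^{\nu}(A)^{2n}) - \tfrac{1}{\nu}\Tr(\mathcal{T}_{\mu,k}^{\nu}(A_m)^{2n})\Bigr| \leq C \max(\|A\|_{2n}, \|A_m\|_{2n})^{2n-1} \|A - A_m\|_{2n}$$
with $C$ independent of $\nu$. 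On the right-hand side, Lemma~\ref{Huscont} gives continuity $H_\mu^k : S^{2n} \to L^{2n}(\D, d\iota)$, so $\int_\D H_\mu^k(A_m)^{2n} d\iota \to \int_\D H_\mu^k(A)^{2n} d\iota$ by a H\"older estimate on $g_m^{2n} - g^{2n}$.

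An $\varepsilon/3$ argument — choose $m$ large to bound both approximation errors, then take $\nu$ large using the case already proved for $A_m \in \mathcal{R}$ — completes the proof. The main delicate point is the uniform-in-$\nu$ bound: it is crucial that the exponent in Proposition~\ref{qtchannel} is precisely $1/(2n)$, which after being raised to the $2n$-th power in the telescoping estimate produces exactly one factor of $\nu$, absorbing the $1/\nu$ prefactor.
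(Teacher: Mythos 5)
Your proposal is correct and follows essentially the same route as the paper: approximate $A$ in $S^1(\Hc_\mu)$ by elements of $R_\mu^*(C_c^\infty(\D))$ (justified by the same duality/injectivity argument the paper sketches), establish a uniform-in-$\nu$ bound on $\frac{1}{\nu}\bigl|\Tr(\mathcal{T}_{\mu,k}^{\nu}(A)^{2n})-\Tr(\mathcal{T}_{\mu,k}^{\nu}(A_m)^{2n})\bigr|$, control the Husimi side by Lemma~\ref{Huscont}, and conclude with an $\varepsilon/3$ argument. The only difference is cosmetic: you run the perturbation estimate via a telescoping sum and Schatten--H\"older in $S^{2n}$ using the $2n\to 2n$ bound of Proposition~\ref{qtchannel}, whereas the paper does an induction in the trace norm using the $1\to 1$ bound together with $\nm{\mathcal{T}_{\mu,k}^{\nu}(A)}\le\nm{A}$; both correctly yield a constant independent of $\nu$.
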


\begin{proof}
We remark that $R_{\mu}^*(L^1(\D, d \iota))$, and thus $R_{\mu}^*(C_c^{\infty}(\D))$, are dense in $S^1(\Hc_{\mu})$ in the $\nm{\cdot}_{1}$-norm. This can be seen as $R_{\mu}$ is injective on $B(\Hc_{\mu}) = S^1(\Hc_{\mu})^*$. Now let $A \in S^1(\Hc_{\mu})$ and $\{A_l\}_l$ be a sequence in $R_{\mu}^*(C_c^{\infty}(\D))$ such that
$$ \lim_{l \rightarrow \infty} \nm{A - A_l}_{1} = 0.$$
We claim this implies
$$ \lim_{l \rightarrow \infty} \frac{1}{\nu} \Tr( \mathcal{T}_{\mu,k}^{\nu}(A)^{n} - \mathcal{T}_{\mu,k}^{\nu}(A_l)^{n}) = 0,$$
with rate of convergence not depending on $\nu$. We prove a stronger claim, namely
$$ \lim_{l \rightarrow \infty} \frac{1}{\nu} \nm{ \mathcal{T}_{\mu,k}^{\nu}(A)^{n} - \mathcal{T}_{\mu,k}^{\nu}(A_l)^{n}}_{1} = 0,$$
with rate of convergence bounded by $R_n \nm{A - A_l}_{1}$ for some constant $R_n$ dependent on $n$. We prove this by induction. The case $n = 0$ is trivial and $n=1$ is clear from Proposition \ref{qtchannel}. We do the induction step
\begin{flalign*}
& \frac{1}{\nu} \nm{ \mathcal{T}_{\mu,k}^{\nu}(A)^{n+1} - \mathcal{T}_{\mu,k}^{\nu}(A_l)^{n+1}}_{1}
\\ & = \frac{1}{\nu} \nm{ \mathcal{T}_{\mu,k}^{\nu}(A)(\mathcal{T}_{\mu,k}^{\nu}(A)^n - \mathcal{T}_{\mu,k}^{\nu}(A_l)^n)}_{1}
\\ & + \frac{1}{\nu} \nm{ (\mathcal{T}_{\mu,k}^{\nu}(A) - \mathcal{T}_{\mu,k}^{\nu}(A_l)) \mathcal{T}_{\mu,k}^{\nu}(A_l)^n}_{1}
\\ & \leq \frac{1}{\nu} \nm{\mathcal{T}_{\mu,k}^{\nu}(A)^n - \mathcal{T}_{\mu,k}^{\nu}(A_l)^n}_{1} \nm{\mathcal{T}_{\mu,k}^{\nu}(A)}
\\ & + \frac{1}{\nu} \nm{\mathcal{T}_{\mu,k}^{\nu}(A) - \mathcal{T}_{\mu,k}^{\nu}(A_l)}_{1}  \nm{\mathcal{T}_{\mu,k}^{\nu}(A_l)}^n
\\ & \leq (R_n \nm{A} + R_1 \nm{A_l}^n) \nm{A - A_l}_{1}.
\end{flalign*}
Furthermore, by Lemma \ref{Huscont}
$$ \lim_{l \rightarrow \infty} \nm{H_{\mu}^k(A) - H_{\mu}^k(A_l)}_{\infty} = 0$$
and
$$ \lim_{l \rightarrow \infty} \nm{H_{\mu}^k(A) - H_{\mu}^k(A_l)}_{1} = 0,$$
and thus for any integer $n$
$$ \lim_{l \rightarrow \infty} \int_{\D} H_{\mu}^k(A)^{2n} d \iota (z) - \int_{\D} H_{\mu}^k(A_l)^{2n} d \iota (z) = 0. $$
Writing
\begin{flalign*}
& |\frac{1}{\nu} \Tr(\mathcal{T}_{\mu,k}^{\nu}(A)^{2n}) -  \int_{\D} H_{\mu}^k(A)^{2n} d \iota (z)|
\\ & \leq |\frac{1}{\nu} \Tr(\mathcal{T}_{\mu,k}^{\nu}(A)^{2n}) -  \frac{1}{\nu} \Tr(\mathcal{T}_{\mu,k}^{\nu}(A_l)^{2n}) |
\\ & + |\frac{1}{\nu} \Tr(\mathcal{T}_{\mu,k}^{\nu}(A_l)^{2n}) -  \int_{\D} H_{\mu}^k(A_l)^{2n} d \iota (z)|
\\ & + |\int_{\D} H_{\mu}^k(A_l)^{2n} d \iota (z) -  \int_{\D} H_{\mu}^k(A)^{2n} d \iota (z)|
\end{flalign*}
we prove our statement.
\end{proof}

We want to prove Theorem \ref{funccalcx2n} for any integer. We first make an elementary observation. There exists a Taylor expansion
$$ \sqrt{1 - x} = \sum_{i=0}^{\infty} \frac{(-\frac{1}{2})_i}{i!} x^i,$$
where this equality holds for $x \in [-1,1]$. In particular the series $\sum_{i=0}^{\infty} \frac{(-\frac{1}{2})_i}{i!}$ is absolutely convergent as
$$ |\frac{(-\frac{1}{2})_i}{i!}| = |\frac{\Gamma(- \frac{1}{2} + i)}{\Gamma( - \frac{1}{2}) \Gamma(i+1)}| \leq C|\frac{1}{\Gamma(- \frac{1}{2})} \cdot \frac{\Gamma(i)i^{-\frac{1}{2}}}{i \Gamma(i)}|=  C|\frac{1}{\Gamma(\frac{1}{2}) i^{\frac{3}{2}}}|$$
for some constant $C$. By Abel's theorem
$$\sum_{i=0}^{\infty} \frac{(-\frac{1}{2})_i}{i!} = \sqrt{1-1} = 0,$$
so
$$1 = - \sum_{i=1}^{\infty} \frac{(-\frac{1}{2})_i}{i!} = \sum_{i=1}^{\infty} \frac{ (\frac{1}{2})_{i-1}}{2 (i!)}.$$
Hence for $x \in [0,1]$
\begin{equation}
\label{taylorsqrt}
x = \sqrt{1 - (1 - x^2)} = \sum_{i=0}^{\infty} \frac{(-\frac{1}{2})_i}{i!} (1 - x^2)^i = \sum_{i=1}^{\infty} \frac{ (\frac{1}{2})_{i-1}}{2 (i!)} (1 - (1 - x^2)^i).
\end{equation}
We prove a lemma.

\begin{lemma}
\label{traceRmun}
For any $\nu$, integer $n$, and $A \in S^1(\Hc_{\mu})$ such that $A \geq 0$ and $\Tr(A) = 1$
\begin{flalign*}
& \frac{1}{\nu} \Tr(\mathcal{T}_{\mu,k}^{\nu}(A)^{n}) = \sum_{i=1}^{\infty} \frac{ (\frac{1}{2})_{i-1}}{2 (i!)} \frac{1}{\nu} \Tr( 1 - (1 - \mathcal{T}_{\mu,k}^{\nu}(A)^{2n})^i).
\end{flalign*}
\end{lemma}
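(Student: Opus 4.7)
My plan is to apply the Taylor expansion (\ref{taylorsqrt}) spectrally to the operator $B \coloneqq \mathcal{T}_{\mu,k}^{\nu}(A)^{n}$, and then take traces termwise, with Tonelli's theorem justifying the interchange.

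First I would check the hypotheses needed to invoke (\ref{taylorsqrt}) on the spectrum of $B$. Since $A \geq 0$ and $\Tr(A) = 1$, one has $\nm{A} \leq 1$, and $\nm{\mathcal{T}_{\mu,k}^{\nu}} = 1$ by Proposition \ref{qtchannel}, so $\nm{\mathcal{T}_{\mu,k}^{\nu}(A)} \leq 1$. Complete positivity of $\mathcal{T}_{\mu,k}^{\nu}$ gives $\mathcal{T}_{\mu,k}^{\nu}(A) \geq 0$, and Proposition \ref{Tquantumchannel} shows it is trace-class. Hence $B$ is a trace-class positive operator with spectrum contained in $[0,1]$; let $(e_m)_m$ be an orthonormal basis of eigenvectors with eigenvalues $\lambda_m \in [0,1]$.

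Next I would apply (\ref{taylorsqrt}) to each $\lambda_m$, obtaining
$$\lambda_m = \sum_{i=1}^{\infty} \frac{(\tfrac{1}{2})_{i-1}}{2(i!)}\bigl(1 - (1-\lambda_m^2)^i\bigr).$$
For each fixed $i \geq 1$ the binomial expansion $1 - (1-B^2)^i = \sum_{j=1}^{i} (-1)^{j+1}\binom{i}{j} B^{2j}$ is a polynomial in $B^2$ with no constant term, so it is trace-class (as $B$ is) with
$$\Tr\bigl(1 - (1-B^2)^i\bigr) = \sum_m \bigl(1 - (1-\lambda_m^2)^i\bigr).$$
Crucially, both the coefficients $\frac{(\tfrac12)_{i-1}}{2(i!)}$ and the quantities $1-(1-\lambda_m^2)^i$ are non-negative, so Tonelli's theorem for doubly-indexed non-negative series lets me exchange the order of summation:
$$\sum_{i=1}^{\infty}\frac{(\tfrac12)_{i-1}}{2(i!)} \Tr\bigl(1 - (1-B^2)^i\bigr) = \sum_m \sum_{i=1}^{\infty}\frac{(\tfrac12)_{i-1}}{2(i!)}\bigl(1-(1-\lambda_m^2)^i\bigr) = \sum_m \lambda_m = \Tr(B).$$
Dividing by $\nu$ yields the claimed identity.

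The only real subtlety is that the identity operator $1$ is \emph{not} trace-class, so one must read $\Tr\bigl(1 - (1-B^2)^i\bigr)$ as the trace of the polynomial $\sum_{j=1}^{i} (-1)^{j+1}\binom{i}{j} B^{2j}$, where the $1$'s have already cancelled. Once this interpretation is in place, the argument is spectral-theoretic and the interchange step is unconditional thanks to non-negativity; there is no delicate dominated-convergence estimate to make.
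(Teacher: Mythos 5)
Your proof is correct and follows essentially the same route as the paper: spectral decomposition of the positive trace-class operator $\mathcal{T}_{\mu,k}^{\nu}(A)$ with eigenvalues in $[0,1]$, termwise application of the expansion (\ref{taylorsqrt}), and interchange of the two summations. You are in fact slightly more careful than the paper, which performs the interchange without explicitly invoking non-negativity and does not comment on the (harmless) point that $\Tr(1-(1-\mathcal{T}_{\mu,k}^{\nu}(A)^{2n})^i)$ must be read as the trace of a polynomial with no constant term.
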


\begin{proof}
By Lemma \ref{Tquantumchannel} the operator $\mathcal{T}_{\mu,k}^{\nu}(A)$ is positive and trace-class, so
$$ \mathcal{T}_{\mu,k}^{\nu}(A) = \sum_{j} \lambda_j(\nu) e_j(\nu) \otimes e_j(\nu)^*,$$
where the $e_j(\nu)$ are the orthonormal eigenvectors for the eigenvalues $\lambda_j(\nu) \geq 0$. Also $\nm{A} \leq \nm{A}_{1} \leq 1$, so by Proposition \ref{qtchannel}
$$ \nm{\mathcal{T}_{\mu,k}^{\nu}(A)} \leq 1$$
and $0 \leq \lambda_j(\nu) \leq 1$ for all $j$. Thus
\begin{flalign*}
& \frac{1}{\nu} \Tr(\mathcal{T}_{\mu,k}^{\nu}(A)^{n}) = \frac{1}{\nu} \sum_j \lambda_j(\nu)^n = \frac{1}{\nu} \sum_j \sum_{i=1}^{\infty} \frac{ (\frac{1}{2})_{i-1}}{2 (i!)} (1 - (1 - \lambda_j(\nu)^{2n})^i)
\\ & = \sum_{i=1}^{\infty} \frac{ (\frac{1}{2})_{i-1}}{2 (i!)} \frac{1}{\nu} \sum_j (1 - (1 - \lambda_j(\nu)^{2n})^i)
\\ & = \sum_{i=1}^{\infty} \frac{ (\frac{1}{2})_{i-1}}{2 (i!)} \frac{1}{\nu} \Tr( 1 - (1 - \mathcal{T}_{\mu,k}^{\nu}(A)^{2n})^i).
\end{flalign*}
\end{proof}

We now prove a theorem.

\begin{theorem}
\label{funccalcxn}
Let $A \in S^1(\Hc_{\mu})$ such that $A \geq 0$ and $\Tr(A) = 1$. Then
$$ \lim_{\nu \rightarrow \infty} \frac{1}{\nu} \Tr(\mathcal{T}_{\mu,k}^{\nu}(A)^{n}) =  \int_{\D} H_{\mu}^k(A)(z)^{n} d \iota (z)$$
for any integer $n$. Furthermore, for $f \in L^1(\D, d \iota)$ such that $R_{\mu}^*(f) \geq 0$ and $\Tr(R_{\mu}^*(f)) = 1$ we get
$$ \lim_{\nu \rightarrow \infty} \frac{1}{\nu} \Tr(\mathcal{T}_{\mu,k}^{\nu}(R_{\mu}^*(f))^{n}) =  \int_{\D} E_{\mu,k}(f)(z)^{n} d \iota (z)$$
for any integer $n$.
\end{theorem}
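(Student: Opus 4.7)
The plan is to bootstrap from the even-power case (Corollary \ref{Husimiremark2}) via the identity of Lemma \ref{traceRmun}, and then extend by density. Writing $c_i := \frac{(1/2)_{i-1}}{2\,i!}$, Lemma \ref{traceRmun} gives
$$\frac{1}{\nu}\Tr(\mathcal{T}_{\mu,k}^{\nu}(A)^{n}) \;=\; \sum_{i=1}^\infty c_i\,\frac{1}{\nu}\Tr\bigl(1-(1-\mathcal{T}_{\mu,k}^{\nu}(A)^{2n})^i\bigr).$$
Expanding $1 - (1-X)^i = \sum_{j=1}^i(-1)^{j+1}\binom{i}{j}X^j$, Corollary \ref{Husimiremark2} makes each summand converge as $\nu \to \infty$ to $\int_\D \bigl(1 - (1 - H_\mu^k(A)^{2n})^i\bigr)\,d\iota$, and summing and invoking \eqref{taylorsqrt} applied to $x = H_\mu^k(A)(z)^n$ would then recover $\int_\D H_\mu^k(A)^n d\iota$. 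The entire task is to justify exchanging $\lim_\nu$ and $\sum_i$.

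For this I will establish a uniform-in-$\nu$ tail bound. Setting $T_N(u) := \sum_{i>N} c_i(1-(1-u)^i)$ for $u \in [0,1]$, substituting $y = 1 - u$ in the Taylor series $\sqrt{1-y} = 1 - \sum_{i \geq 1} c_i y^i$ yields $T_N(1) - T_N(u) = \sum_{i>N} c_i(1-u)^i \geq 0$, so $T_N(u) \leq T_N(1) = \sum_{i>N} c_i$ independently of $u$, with $T_N(1) \to 0$ as $N \to \infty$. Since the series for $\sqrt{u}$ in \eqref{taylorsqrt} has nonnegative terms I also have $T_N(u) \leq \sqrt{u}$. Combining via $\min(a,b) \leq a^{1/n} b^{1-1/n}$ with $a = \lambda_j^n$, $b = T_N(1)$, the eigenvalues $\lambda_j(\nu) \in [0,1]$ of $\mathcal{T}_{\mu,k}^{\nu}(A)$ satisfy
$$\frac{1}{\nu}\sum_j T_N(\lambda_j(\nu)^{2n}) \;\leq\; T_N(1)^{1-1/n} \cdot \frac{\Tr(\mathcal{T}_{\mu,k}^{\nu}(A))}{\nu} \;\leq\; C\,T_N(1)^{1-1/n},$$
uniformly in $\nu$, using the trace formula of Proposition \ref{Tquantumchannel}. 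For $n \geq 2$ this vanishes as $N \to \infty$. An analogous bound controls the corresponding tail of $\sum_{i>N} c_i \int_\D (1-(1-H_\mu^k(A)^{2n})^i) d\iota$, using that $H_\mu^k(A) \in L^1(\D)\cap L^\infty(\D)$ by Lemma \ref{Huscont}.

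The case $n=1$ is handled directly: Proposition \ref{Tquantumchannel} gives $\frac{1}{\nu}\Tr(\mathcal{T}_{\mu,k}^{\nu}(A)) \to \frac{\Tr(A)}{\mu-1}$, and Lemma \ref{traceclassint} applied as in the proof of Lemma \ref{Huscont} computes $\int_\D H_\mu^k(A)\,d\iota = \frac{\Tr(A)}{\mu-1}$. To extend the first statement from $A = R_\mu^*(f)$ with $f \in C_c^\infty(\D)$ to arbitrary positive trace-one $A \in S^1(\Hc_\mu)$ I will carry out the density argument from Corollary \ref{Husimiremark2}, using the $S^p$- and $L^p$-continuity of $\mathcal{T}_{\mu,k}^{\nu}$ (Proposition \ref{qtchannel}) and $H_\mu^k$ (Lemma \ref{Huscont}); the $\nu$-dependence of the bound produced in the above induction is again uniform in $\nu$. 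The second statement then follows by substituting $A = R_\mu^*(f)$, which lies in $S^1(\Hc_\mu)$ by Proposition \ref{factsRmu}, and applying the identification $H_\mu^k(R_\mu^*(f)) = E_{\mu,k}(f)$ of Proposition \ref{Husimiremark1}.

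The hard part is the uniform tail estimate. Because $c_i \sim i^{-3/2}$ is only marginally summable, neither $T_N(u) \leq T_N(1)$ nor $T_N(u) \leq \sqrt{u}$ is sufficient alone: the first would require bounding the (generally infinite) rank of $\mathcal{T}_{\mu,k}^{\nu}(A)$, and the second leads to a circular estimate in terms of $\frac{1}{\nu}\Tr(\mathcal{T}_{\mu,k}^{\nu}(A)^n)$ itself. The multiplicative interpolation of these two bounds, weighted against the uniform trace bound on $\mathcal{T}_{\mu,k}^{\nu}(A)$, is the technical core of the argument and crucially uses $n \geq 2$.
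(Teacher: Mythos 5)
Your proof is correct, and it reaches the interchange of $\lim_{\nu}$ and $\sum_i$ --- the only genuinely delicate point --- by a different mechanism than the paper. The paper controls the tails $\sum_{i>I}$ uniformly in $\nu$ by first treating $n=1$, where both $\frac{1}{\nu}\Tr(\mathcal{T}_{\mu,k}^{\nu}(A))$ and $\int_{\D}H_{\mu}^k(A)\,d\iota(z)$ are computed exactly (Proposition \ref{Tquantumchannel} and Lemma \ref{traceclassint}), so that termwise convergence plus convergence of the total sum forces the $n=1$ tails to be eventually uniformly small; it then transfers this to all $n$ at once via the pointwise domination $0\le 1-(1-x^{2n})^i\le 1-(1-x^2)^i$ on $[0,1]$. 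You instead prove a direct quantitative tail bound of the form $C\,\bigl(\sum_{i>N}c_i\bigr)^{1-1/n}$ by interpolating the two elementary estimates $T_N(u)\le T_N(1)$ and $T_N(u)\le\sqrt{u}$ against the trace of $\mathcal{T}_{\mu,k}^{\nu}(A)$, which is bounded uniformly in $\nu$ by the same trace formula; your diagnosis that neither estimate suffices alone and that the multiplicative combination is the crux is accurate. Both arguments are anchored on Proposition \ref{Tquantumchannel}; yours yields an explicit rate in $N$ (of order $N^{-(n-1)/(2n)}$ since $c_i\sim i^{-3/2}$) at the cost of degenerating at $n=1$, which you rightly handle separately, while the paper's domination trick is rate-free but handles every $n$ from the single computable case. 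One remark: the closing density step you announce is redundant. Corollary \ref{Husimiremark2} is already stated for arbitrary $A\in S^1(\Hc_{\mu})$, and your tail estimate uses nothing about $A$ beyond positivity, $\Tr(A)=1$ and the resulting bound $\lambda_j(\nu)\in[0,1]$, so the argument applies verbatim to every such $A$; had you actually run the approximation at the level of the odd-power statement you would also have needed the approximants $R_{\mu}^*(f_l)$ to remain positive and normalized, a point the paper sidesteps by confining the density argument to the even-power corollary.
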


\begin{proof}
Note that for any $z \in \D$ we have $0 \leq H_{\mu}^k(A)(z) \leq 1$. By Equation (\ref{taylorsqrt})
\begin{equation}
\label{intsumsquare}
H_{\mu}^k(A)^n = \sum_{i=1}^{\infty} \frac{ (\frac{1}{2})_{i-1}}{2 (i!)} (1 - (1 - H_{\mu}^k(A)^{2n})^{i}),
\end{equation}
implying
\begin{equation}
\label{Emunusum}
\int_{\D} H_{\mu}^k(A)(z)^n d \iota(z) = \sum_{i=1}^{\infty} \frac{ (\frac{1}{2})_{i-1}}{2 (i!)} \int_{\D} (1 - (1 - H_{\mu}^k(A)(z)^{2n})^{i}) d \iota(z).
\end{equation}
This is finite for $n=1$ by Lemma \ref{traceclassint}, so by the boundedness of $H_{\mu}^k(A)$ it is finite for all $n$. Now note that by Lemma \ref{traceRmun} for $n=1$, Equation (\ref{intsumsquare}), and Remark \ref{Husimiremark2}
\begin{flalign*}
& \lim_{\nu \rightarrow \infty} \sum_{i=1}^{\infty} \frac{ (\frac{1}{2})_{i-1}}{2 (i!)} \frac{1}{\nu} \Tr( 1 - (1 - \mathcal{T}_{\mu,k}^{\nu}(A)^{2})^i) = \lim_{\nu \rightarrow \infty} \frac{1}{\nu} \Tr(\mathcal{T}_{\mu,k}^{\nu}(A))
\\ & = \lim_{\nu \rightarrow \infty} \frac{\mu + \nu + 2k - 1}{\nu(\mu-1)} \Tr(A) = \int_{\D} H_{\mu}^k(A)(z) d \iota(z)
\\ & = \sum_{i=1}^{\infty} \frac{ (\frac{1}{2})_{i-1}}{2 (i!)} \int_{\D} 1 - (1 - H_{\mu}^k(A)^{2})^{i} d \iota(z)
\\ & = \sum_{i=1}^{\infty} \frac{ (\frac{1}{2})_{i-1}}{2 (i!)} \lim_{\nu \rightarrow \infty} \frac{1}{\nu} \Tr( 1 - (1 - \mathcal{T}_{\mu,k}^{\nu}(A)^{2})^i).
\end{flalign*}
This implies that for any integer $I$
\begin{flalign*}
& \sum_{i=I+1}^{\infty} \lim_{\nu \rightarrow \infty} \frac{ (\frac{1}{2})_{i-1}}{2 (i!)} \frac{1}{\nu} \Tr( 1 - (1 - \mathcal{T}_{\mu,k}^{\nu}(A)^{2})^i)
\\ & = \lim_{\nu \rightarrow \infty} \sum_{i=I+1}^{\infty} \frac{ (\frac{1}{2})_{i-1}}{2 (i!)} \frac{1}{\nu} \Tr( 1 - (1 - \mathcal{T}_{\mu,k}^{\nu}(A)^{2})^i).
\end{flalign*}
This means that for $\epsilon > 0$ we can choose $I$ such that
$$ \sum_{i=I+1}^{\infty} \frac{ (\frac{1}{2})_{i-1}}{2 (i!)} \lim_{\nu \rightarrow \infty} \frac{1}{\nu} \Tr( 1 - (1 - \mathcal{T}_{\mu,k}^{\nu}(A)^{2})^i) < \epsilon$$
and
$$\sum_{i=I+1}^{\infty} \frac{ (\frac{1}{2})_{i-1}}{2 (i!)} \int_{\D} 1 - (1 - H_{\mu}^k(A)(z)^{2})^i d \iota(z) < \epsilon,$$
and $N_0$ such that $\nu \geq N_0$ implies
$$ \sum_{i=I+1}^{\infty} \frac{ (\frac{1}{2})_{i-1}}{2 (i!)} \frac{1}{\nu} \Tr( 1 - (1 - \mathcal{T}_{\mu,k}^{\nu}(A)^{2})^i) < 2 \epsilon.$$
Note that all the terms in these sums are positive. Now the inequality
$$ 0 \leq 1 - (1 - x^{2n})^i \leq 1 - (1 - x^2)^i$$
for $0 \leq x \leq 1$ implies for any $n$
$$ \sum_{i=I+1}^{\infty} \frac{ (\frac{1}{2})_{i-1}}{2 (i!)} \int_{\D} 1 - (1 - H_{\mu}^k(A)(z)^{2n})^i d \iota(z) < \epsilon,$$
and for any $n$ and $\nu \geq N_0$
$$ \sum_{i=I+1}^{\infty} \frac{ (\frac{1}{2})_{i-1}}{2 (i!)} \frac{1}{\nu} \Tr( 1 - (1 - \mathcal{T}_{\mu,k}^{\nu}(A)^{2n})^i) < 2 \epsilon.$$
Now we choose $N \geq N_0$ such that for $\nu \geq N$
\begin{flalign*}
& | \sum_{i=1}^{I} \frac{ (\frac{1}{2})_{i-1}}{2 (i!)} \frac{1}{\nu} \Tr( 1 - (1 - \mathcal{T}_{\mu,k}^{\nu}(A)^{2n})^i)
\\ & - \sum_{i=1}^I \frac{ (\frac{1}{2})_{i-1}}{2 (i!)} \int_{\D} (1 - (1 - H_{\mu}^k(A)(z)^{2n})^i) d \iota(z)| < \epsilon,
\end{flalign*}
which is possible by Remark \ref{Husimiremark2}. Now by Lemma \ref{traceRmun} and Equation \ref{Emunusum}, for $\nu \geq N$ we get
\begin{flalign*}
& | \frac{1}{\nu} \Tr( \mathcal{T}_{\mu,k}^{\nu}(A)^n) - \int_{\D} H_{\mu}^k(A)(z)^n d \iota(z)|
\\ & \leq | \frac{1}{\nu} \Tr( \mathcal{T}_{\mu,k}^{\nu}(A)^n - \sum_{i=1}^{I} \frac{ (\frac{1}{2})_{i-1}}{2 (i!)} \frac{1}{\nu} \Tr( 1 - (1 - \mathcal{T}_{\mu,k}^{\nu}(A)^{2n})^i)|
\\ & + |\sum_{i=1}^{I} \frac{ (\frac{1}{2})_{i-1}}{2 (i!)} \frac{1}{\nu} \Tr( 1 - (1 - \mathcal{T}_{\mu,k}^{\nu}(A)^{2n})^i)
\\ & - \sum_{i=1}^I \frac{ (\frac{1}{2})_{i-1}}{2 (i!)} \int_{\D} (1 - (1 - H_{\mu}^k(A)(z)^{2n})^i) d \iota(z)|
\\ & + |\sum_{i=1}^I \frac{ (\frac{1}{2})_{i-1}}{2 (i!)} \int_{\D} (1 - (1 - 
H_{\mu}^k(A)(z)^{2n})^i) d \iota(z) - \int_{\D} H_{\mu}^k(A)(z)^n d \iota(z)|
\\ & \leq |\sum_{i=I+1}^{\infty} \frac{ (\frac{1}{2})_{i-1}}{2 (i!)} \frac{1}{\nu} \Tr( 1 - (1 - \mathcal{T}_{\mu,k}^{\nu}(A)^{2n})^i)|
\\ & + \epsilon + |\sum_{i=I+1}^{\infty} \frac{ (\frac{1}{2})_{i-1}}{2 (i!)} \int_{\D} (1 - (1 - H_{\mu}^k(A)(z)^{2n})^i) d \iota(z)|
< 4 \epsilon.
\end{flalign*}
This proves our theorem as the second part follows directly from the first.
\end{proof}

We now calculate the limit of the functional calculus for any $\psi \in C([0,1])$. However, $\psi(\mathcal{T}_{\mu,k}^{\nu}(A))$ will not always be a trace-class operator for any function; a trivial example is $\psi = 1$ which will result in the identity operator. We show for which functions we might expect $\psi(\mathcal{T}_{\mu,k}^{\nu}(A))$ is trace-class, and some examples for which it is not.

\begin{lemma}
For $\psi \in C([0,1]) \cap O(x) \coloneqq \{ f \in C([0,1]) \mid \exists_{c \in \R} \ |f(x)| \leq c|x| \}$ the operator $\psi(\mathcal{T}_{\mu,k}^{\nu}(A))$ is trace-class for a positive trace-class operator $A$. If $\psi(x) = x^p$ where $p < 1$, there is some $A \in S^1(\Hc_{\mu})$ such that $\psi(A)$ is no longer trace-class.
\end{lemma}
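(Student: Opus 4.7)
The plan is to handle the two halves of the lemma separately, both via spectral bookkeeping on compact positive operators. For the first assertion I would first invoke Proposition \ref{Tquantumchannel} to note that $\mathcal{T}_{\mu,k}^{\nu}(A)$ is positive and trace-class whenever $A$ is, and Proposition \ref{qtchannel} to bound $\nm{\mathcal{T}_{\mu,k}^{\nu}(A)} \leq \nm{A}$. After normalizing (for instance $\nm{A} \leq 1$, which is the setup in which $\psi \in C([0,1])$ is naturally applied) the spectral theorem for compact self-adjoint operators yields
$$\mathcal{T}_{\mu,k}^{\nu}(A) = \sum_j \lambda_j \, e_j \otimes e_j^*$$
with $\lambda_j \in [0,1]$ and $\sum_j \lambda_j = \Tr(\mathcal{T}_{\mu,k}^{\nu}(A)) < \infty$. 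The hypothesis $|\psi(x)| \leq c|x|$ forces $\psi(0) = 0$, so the functional calculus gives $\psi(\mathcal{T}_{\mu,k}^{\nu}(A)) = \sum_j \psi(\lambda_j) \, e_j \otimes e_j^*$ and then
$$\nm{\psi(\mathcal{T}_{\mu,k}^{\nu}(A))}_1 = \sum_j |\psi(\lambda_j)| \leq c \sum_j \lambda_j < \infty,$$
which is the trace-class bound.

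For the second assertion I would exhibit an explicit diagonal counterexample. Fix any orthonormal basis $\{e_j\}_{j \geq 1}$ of $\Hc_{\mu}$ and set
$$A = \sum_{j=1}^{\infty} j^{-1/p} \, e_j \otimes e_j^*.$$
Since $0 < p < 1$ gives $1/p > 1$, the series $\sum_j j^{-1/p}$ converges, so $A$ is a positive trace-class operator whose eigenvalues lie in $(0,1]$. But the eigenvalues of $A^p$ are the reciprocals $j^{-1}$, and $\sum_j j^{-1}$ is the divergent harmonic series, so $A^p \notin S^1(\Hc_{\mu})$.

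Neither half involves a serious obstacle; the content is really the interplay between the growth rate of $\psi$ near zero and the $\ell^1$-summability of the spectrum, with the condition $\psi \in O(x)$ being exactly what is needed to transfer summability from $\{\lambda_j\}$ to $\{\psi(\lambda_j)\}$. The only small care point is matching the domain $[0,1]$ of $\psi$ to the spectrum of the relevant operator, which reduces to a trivial normalization of $\nm{A}$.
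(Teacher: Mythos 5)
Your first half is correct and is essentially the paper's own argument: Proposition \ref{Tquantumchannel} gives that $\mathcal{T}_{\mu,k}^{\nu}(A)$ is positive and trace-class, so it has a spectral decomposition $\sum_j \lambda_j\, e_j\otimes e_j^*$ with $\lambda_j\in[0,1]$ (after the harmless normalization you mention) and $\sum_j\lambda_j<\infty$, and the hypothesis $|\psi(x)|\le c|x|$ transfers $\ell^1$-summability from $\{\lambda_j\}$ to $\{\psi(\lambda_j)\}$. No issue there.

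The second half is where there is a genuine gap relative to what the paper actually proves. The wording ``$\psi(A)$'' in the statement is shorthand: the sentence introducing the lemma, and the paper's proof, make clear that the claim is that $\psi(\mathcal{T}_{\mu,k}^{\nu}(A))=\mathcal{T}_{\mu,k}^{\nu}(A)^p$ fails to be trace-class for a suitable positive trace-class $A$ --- i.e.\ that the $O(x)$ condition cannot be relaxed for the \emph{channel output}, which is the object the whole section is about. Your diagonal example only shows $A^p\notin S^1(\Hc_{\mu})$; since $\mathcal{T}_{\mu,k}^{\nu}$ redistributes the spectrum over an infinite-dimensional space, the divergence of $\sum_j\lambda_j^p$ for $A$ does not automatically persist after applying the channel, and your argument never addresses this. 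The paper closes exactly this gap by a specific choice of eigenbasis: take $e_i=\bigl(\tfrac{(\mu)_i}{i!}\bigr)^{1/2}z^i$ with decreasing eigenvalues $\lambda_i$, note that each $\mathcal{T}_{\mu,0}^{\nu}(e_i\otimes e_i^*)$ is again diagonal in the monomial basis with the $z^n$-eigenvalue vanishing for $i>n$, and then use positivity, monotonicity of the $\lambda_i$, and unitality of the channel ($\sum_i\mathcal{T}_{\mu,0}^{\nu}(e_i\otimes e_i^*)=I$) to obtain $\langle\mathcal{T}_{\mu,0}^{\nu}(A)z^n,z^n\rangle\ge\lambda_n\langle z^n,z^n\rangle$, whence $\Tr(\mathcal{T}_{\mu,0}^{\nu}(A)^p)\ge\sum_n\lambda_n^p=\infty$. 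If you intend only the literal reading that $A^p$ itself need not be trace-class, your example is fine but proves a statement with no bearing on the channels; to match the lemma's purpose you need the lower bound on the spectrum of $\mathcal{T}_{\mu,k}^{\nu}(A)$ described above.
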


\begin{proof}
Let $A$ be trace-class and $\psi \in C([0,1]) \cap O(x)$, then $\mathcal{T}_{\mu,k}^{\nu}(A)$ also is by Proposition \ref{Tquantumchannel}. Thus
$$ \mathcal{T}_{\mu,k}^{\nu}(A) = \sum_{i=1}^{\infty} \lambda_i (e_i \otimes e_i^*)$$
for some orthonormal basis $\{e_i\}_i$ and $\sum_{i=1}^{\infty} |\lambda_i| < \infty$. Then
$$\Tr(\psi(\mathcal{T}_{\mu,k}^{\nu}(A)) = \sum_{i=1}^{\infty} \psi(\lambda_i) \leq \sum_{i=1}^{\infty} c \lambda_i < \infty.$$

Now let $p < 1$ and define the series $\{ \lambda_i \}_i$ positive and decreasing such that $\sum_{i=1}^{\infty} \lambda_i$ is (absolutely) convergent, but $\sum_{i=1}^{\infty} \lambda_i^p$ diverges. Define
$$ A =  \sum_{i=1}^{\infty} \lambda_i (e_i \otimes e_i^*)$$
for the orthonormal basis $\{e_i = (\frac{(\mu)_i}{i!})^{\frac{1}{2}} z^i \}_i$ of $\Hc_{\mu}$. Then all the operators $\mathcal{T}_{\mu,0}^{\nu}(e_i \otimes e_i^*)$ are simultaneously diagonalizable with eigenvectors $\{ z^n \}_n$, where furthermore the eigenvalue of $z^n$ is $0$ whenever $i > n$. Hence the operator $\mathcal{T}_{\mu,k}^{\nu}(A)$ is diagonalizable with eigenvectors $\{z^n\}_n$ and
\begin{flalign*}
& \langle \mathcal{T}_{\mu,0}^{\nu}(A)(z^n), z^n \rangle = \sum_{i=0}^n \lambda_i \langle P_k( \mathcal{T}_{\mu,k}^{\nu}(e_i \otimes e_i^*) z^n, z^n \rangle
\\ & \geq \lambda_n \sum_{i=0}^{n} \langle \mathcal{T}_{\mu,k}^{\nu}(e_i \otimes e_i^*) z^n, z^n \rangle = \lambda_n \sum_{i=0}^{\infty} \langle P_0( (e_i \otimes e_i^*) \otimes I_{\nu}) P_0^* z^n, z^n \rangle
\\ & = \lambda_n \langle z^n, z^n \rangle.
\end{flalign*}
Thus for the trace
$$ \Tr(\mathcal{T}_{\mu,k}^{\nu}(A)^p) \geq \sum_{i=0}^{\infty} \lambda_i^p = \infty.$$
\end{proof}

We now find the functional calculus. We consider the functional calculus on $C([0,1])$.

\begin{theorem}
For $\psi \in C^1([0,1]) $, $\psi(0) = 0$ and $A \in S^1(\Hc_{\mu})$ such that $A \geq 0$ and $\Tr(A) = 1$
$$ \lim_{\nu \rightarrow \infty} \frac{1}{\nu} \Tr(\psi(\mathcal{T}_{\mu,k}^{\nu}(A))) = \int_{\D} \psi(H_{\mu}^k(A)(z)) d \iota (z).$$
Furthermore, if $f \in L^1(\D, d \iota)$ such that $R_{\mu}^*(f) \geq 0$ and $\Tr(R_{\mu}^*(f)) = \int_{\D} f(z) d \iota(z) = 1$
$$ \lim_{\nu \rightarrow \infty} \frac{1}{\nu} \Tr(\psi(\mathcal{T}_{\mu,k}^{\nu}(R_{\mu}^*(f)))) = \int_{\D} \psi(E_{\mu,k}(f)(z)) d \iota (z).$$
\end{theorem}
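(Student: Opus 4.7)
The plan is to reduce to the polynomial case already handled by Theorem \ref{funccalcxn}, using uniform polynomial approximation that respects the vanishing condition at $0$. First, since $\psi \in C^1([0,1])$ with $\psi(0) = 0$, the mean value theorem gives $|\psi(x)| \leq \nm{\psi'}_\infty \cdot x$ on $[0,1]$, so $\psi \in C([0,1]) \cap O(x)$; by the lemma preceding the theorem, $\psi(\mathcal{T}_{\mu,k}^\nu(A))$ is trace-class, and the statement is well posed.

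Next I would construct polynomial approximations that vanish at $0$. The function $\rho(x) \coloneqq \psi(x)/x$ extends continuously to $[0,1]$ with $\rho(0) = \psi'(0)$ because $\psi \in C^1$. By the Weierstrass approximation theorem, given $\epsilon > 0$ there is a polynomial $Q$ with $\sup_{x \in [0,1]}|\rho(x)-Q(x)| < \epsilon$, so $P(x) \coloneqq x\,Q(x)$ is a polynomial with $P(0) = 0$ satisfying $|\psi(x) - P(x)| \leq \epsilon\, x$ for all $x \in [0,1]$. By linearity, Theorem \ref{funccalcxn} gives
$$\lim_{\nu \to \infty} \frac{1}{\nu} \Tr(P(\mathcal{T}_{\mu,k}^\nu(A))) = \int_\D P(H_\mu^k(A)(z))\, d\iota(z).$$

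Now I would control the two error terms uniformly. Since $\nm{A}_1 = \Tr(A) = 1$ and $A \geq 0$ give $\nm{\mathcal{T}_{\mu,k}^\nu(A)} \leq 1$ by Proposition \ref{qtchannel}, the eigenvalues $\lambda_i(\nu) \geq 0$ of $\mathcal{T}_{\mu,k}^\nu(A)$ lie in $[0,1]$, so by the spectral theorem and Proposition \ref{Tquantumchannel}
$$\frac{1}{\nu}\bigl|\Tr(\psi(\mathcal{T}_{\mu,k}^\nu(A)) - P(\mathcal{T}_{\mu,k}^\nu(A)))\bigr| \leq \frac{\epsilon}{\nu} \sum_i \lambda_i(\nu) = \frac{\epsilon}{\nu} \cdot \frac{\mu+\nu+2k-1}{\mu-1},$$
which is bounded by $\epsilon C$ for $\nu$ large and some $C$ independent of $\nu$. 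On the integral side, $0 \leq H_\mu^k(A) \leq 1$ and $\int_\D H_\mu^k(A)\,d\iota = \Tr(A) = 1$ by the proof of Lemma \ref{Huscont}, giving
$$\Bigl|\int_\D \psi(H_\mu^k(A)(z)) - P(H_\mu^k(A)(z))\, d\iota(z)\Bigr| \leq \epsilon \int_\D H_\mu^k(A)(z)\, d\iota(z) = \epsilon.$$
Combining these with the triangle inequality and letting $\nu \to \infty$ using the polynomial case, the $\limsup$ of the difference is bounded by $(C+1)\epsilon$, and since $\epsilon$ is arbitrary the first limit formula follows. The second statement is immediate: for $f \in L^1(\D,d\iota)$ with $R_\mu^*(f) \geq 0$ and $\Tr(R_\mu^*(f)) = 1$, set $A = R_\mu^*(f)$ and use Proposition \ref{Husimiremark1} identifying $H_\mu^k(R_\mu^*(f)) = E_{\mu,k}(f)$.

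The main obstacle is not really a deep one — Theorem \ref{funccalcxn} already does the heavy lifting for monomials — but one must make sure the polynomial approximation of $\psi$ is chosen so that the remainder is pointwise dominated by $\epsilon x$ rather than merely by $\epsilon$. This is why I use $\psi/x \in C([0,1])$ rather than approximating $\psi$ directly, since the uniform error $\epsilon$ would only yield a bound $\epsilon \cdot \mathrm{rank}$ on the trace, which diverges after dividing by $\nu$. The factorization $P(x) = xQ(x)$ converts the uniform estimate for $Q$ into a trace-norm estimate that stays bounded in $\nu$ thanks to the trace identity $\Tr(\mathcal{T}_{\mu,k}^\nu(A)) = O(\nu)$.
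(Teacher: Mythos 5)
Your proposal is correct and follows essentially the same route as the paper: extend $\psi(x)/x$ continuously to $[0,1]$, approximate it uniformly by a polynomial $Q$, set $P(x)=xQ(x)$ so the remainder is dominated by $\epsilon x$, control the trace error via $\Tr(\mathcal{T}_{\mu,k}^{\nu}(A)) = \frac{\mu+\nu+2k-1}{\mu-1}$ and the integral error via the integrability of $H_{\mu}^k(A)$, then invoke the polynomial-case limit and the identification $H_{\mu}^k(R_{\mu}^*(f)) = E_{\mu,k}(f)$ for the second claim. The only differences are cosmetic (you cite Theorem \ref{funccalcxn} directly for the polynomial case, which is in fact the cleaner reference, and your normalization of $\int_{\D} H_{\mu}^k(A)\,d\iota$ differs from the paper's by a constant factor that is immaterial to the argument).
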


\begin{proof}
First we note that $\mathcal{T}_{\mu,k}^{\nu}(A)$ has eigenvalues bounded by $1$, so the functional calculus $\psi(\mathcal{T}_{\mu,k}^{\nu}(A))$ is well-defined. Also note that $\psi(H_{\mu}^k(A)(z))$ is always well-defined. Now we note that $x \mapsto \frac{\psi(x)}{x}$ is in $C([0,1])$, if we define it to be
$$ \lim_{x \rightarrow 0^+} \frac{\psi(x)}{x} = \frac{\psi'(0)}{1} = \psi'(0)$$
at $0$. Hence it is the limit of a sequence of polynomials $\{P_n\}_n$. We define $Q_n(x) = x P_n(x)$ and see $| \frac{\psi(x)}{x} - P_n(x)| < \epsilon_n$ implies
$$ | \psi(x) - Q_n(x) | < \epsilon_n x.$$
Now we note that if $\{ \lambda_i \}_i$ are the eigenvalues of $\mathcal{T}_{\mu,k}^{\nu}(A)$ then
\begin{flalign*}
& \frac{1}{\nu} |\Tr(\psi(\mathcal{T}_{\mu,k}^{\nu}(A))) - \Tr(Q_n(\mathcal{T}_{\mu,k}^{\nu}(A)))|
\\ & \leq \frac{1}{\nu} \sum_{i=1}^{\infty} | \psi(\lambda_i) - Q_n(\lambda_i) | \leq \frac{\epsilon_n}{\nu} \sum_{i=1}^{\infty} \lambda_i = \frac{\epsilon_n}{\nu} \Tr(\mathcal{T}_{\mu,k}^{\nu}(A))
\\ & = \epsilon_n \frac{\mu + \nu + 2k -1}{\nu(\mu -1)}.
\end{flalign*}
Also using Lemma \ref{traceclassint}
\begin{flalign*}
& |\int_{\D} \psi(H_{\mu}^k(A)(z)) d \iota (z) - \int_{\D} Q_n(H_{\mu}^k(A)(z)) d \iota (z)| 
\\ & \leq \int_{\D} |\psi(H_{\mu}^k(A)(z)) - Q_n(H_{\mu}^k(A)(z))| d \iota (z)
\\ & \leq \int_{\D} \epsilon_n H_{\mu}^k(A)(z) d \iota(z) = \frac{\epsilon_n}{\mu-1}.
\end{flalign*}
Finally, by Remark \ref{Husimiremark2} for each $n$
$$ \lim_{\nu \rightarrow \infty} \frac{1}{\nu} \Tr(Q_n(\mathcal{T}_{\mu,k}^{\nu}(A))) = \int_{\D} Q_n(H_{\mu}^k(A)(z)) d \iota (z).$$
Now if $\epsilon > 0$, we can first choose $\epsilon_n$ such that for any $\nu$
$$\epsilon_n \frac{\mu + \nu + 2k -1}{\nu(\mu -1)} < \epsilon,$$
and $\frac{\epsilon_n}{\mu-1} < \epsilon$, and then we can choose $N$ big such that for $\nu \geq N$
$$ | \frac{1}{\nu} \Tr(Q_n(\mathcal{T}_{\mu,k}^{\nu}(A))) - \int_{\D} Q_n(H_{\mu}^k(A)(z)) d \iota (z)| < \epsilon.$$
It follows that for $\nu \geq N$
\begin{flalign*}
& | \frac{1}{\nu} \Tr(\psi(\mathcal{T}_{\mu,k}^{\nu}(A))) - \int_{\D} \psi(H_{\mu}^k(A)(z)) d \iota (z) |
\\ & \leq | \frac{1}{\nu} \Tr(\psi(\mathcal{T}_{\mu,k}^{\nu}(A))) - \frac{1}{\nu} \Tr(Q_n(\mathcal{T}_{\mu,k}^{\nu}(A)))|
\\ & + |\frac{1}{\nu} \Tr(Q_n(\mathcal{T}_{\mu,k}^{\nu}(A))) - \int_{\D} Q_n(H_{\mu}^k(A)(z)) d \iota (z)|
\\ & + |\int_{\D} Q_n(H_{\mu}^k(A)(z)) d \iota (z) - \int_{\D} \psi(H_{\mu}^k(A)(z)) d \iota (z)|
< 3 \epsilon
\end{flalign*}
for $\nu \geq N$, and we obtain the first part of the theorem. The second part follows from the first.
\end{proof}

We note that implicit in the proof is that $\psi \in C^1([0,1])$ and $\psi(x) = 0$ implies $\psi \in O(|x|)$.

There are several natural questions related to this result. One of these is the question of majorization. It is proved by Lieb and Solovej \cite{liebsolBloch} that for the $SU(2)$-equivariant quantum channels corresponding to the highest weight, the operator $\mathcal{T}_{\mu,0}^{\nu}(A)$, where $A \geq 0$ and $\Tr(A) = 1$, is majorized for a projection onto the highest weight vector. This was later generalized to some representations of $SU(n)$ \cite{liebsolSymm}. For the $SU(1,1)$ case, I believe a similar property holds considering the lowest weight component. Kulikov's result \cite{kuli} can be formulated as this being true in the limit, and computer simulations suggest majorization occurs, but I have not been able to prove this. 

There are also more questions about the classification of equivariant maps and the limit in different Banach spaces \cite{soletal}. The $SU(2)$-invariant channels have the structure of a simplex \cite{alnu}, and Aschieri, Ruba and Solovej concluded that the limit of the trace of the functional calculus of extremal channels can be written as extremal points of $SU(2)$-equivariant POVMs. We wonder if a similar result holds for $SU(1,1)$-equivariant channels. Furthermore, in \cite{soletal} limits and inequalities in different Banach spaces were considered regarding the difference of the Toeplitz operators $\nm{T_f T_g - T_{fg}}$ in different Banach spaces, and bounds were proved for expansions. The bounds were dependent on the derivative(s) of the functions. Explicit bounds were also given for the convergence. Perhaps similar statements can be obtained for $SU(1,1)$.

It also seems that this limiting procedure of equivariant channels can be put in a more general context of holomorphic discrete series representations of Hermitian Lie groups \cite{pezh, zhangCon}. Genkai Zhang and I are planning to study this in a future publication. These quantum channels might even be studied in the frame of general Toeplitz-quantization \cite{alieng, engl}.

\end{document}